\documentclass[11pt]{article}

\usepackage{graphicx}
\usepackage{epsfig}
\usepackage{amsmath, amsthm, amssymb, mathtools}
\usepackage[mathscr]{euscript}
\usepackage{ifthen}
\usepackage[margin=1.5in]{geometry}
\usepackage[english]{babel}
\usepackage[T1]{fontenc}
\usepackage{aliascnt}
\usepackage{hyperref}
\usepackage[all]{xy}
\usepackage{url}
\usepackage{tikz-cd}
\usepackage[final]{showlabels}
\usepackage[capitalize]{cleveref}

\newcounter{temp}

\def\ds{\displaystyle}

\newcommand\Gal{\mathrm{Gal}}
\def\Hom{\mathrm{Hom}}

\def\Bij{\mathrm{Bij}}
\def\Map{\mathrm{Map}}

\def\Gal{\mathrm{Gal}}
\def\det{\mathrm{det}}

\def\trace{\mathrm{Tr}}
\newcommand\id{\operatorname{id}}

\def\Z{\mathbb{Z}}
\newcommand{\fundgroup}[1]{\pi_{#1}}

\newcommand{\F}[1]{\ensuremath{\mathbb{F}_{#1}}}
\newcommand{\extpower}{{\textstyle\bigwedge}}
\renewcommand{\subset}{\subseteq}
\renewcommand{\supset}{\supseteq}
\renewcommand{\phi}{\varphi}

\def\simto{\ds\mathop{\longrightarrow}^\sim\,}

\newcommand{\charpoly}[1]{\mathrm{p}_{#1}}
\newcommand{\Ferrand}[2][]{\ifthenelse{\equal{#1}{}}{\Phi_{#2}}{\Phi_{#2/#1}}}
\newcommand{\norm}[2][]{\ifthenelse{\equal{#1}{}}{\operatorname{Nm}_{#2}}{\operatorname{Nm}_{#2/#1}}}
\newcommand{\spec}{\operatorname{Spec}}
\def\into{\hookrightarrow}
\def\onto{\twoheadrightarrow}

\newcommand{\set}[1]{\{1,\dots,#1\}}
\newcommand{\card}[1]{|#1|}
\newcommand{\basis}{\operatorname{e}}
\newcommand{\fixpower}[4][]{(#2^{\otimes_{#1} #3})^{#4}}
\newcommand{\conjugate}[2]{#1^{(#2)}}
\newcommand{\permgroup}[1]{\mathrm{S}_{#1}}
\newcommand{\altgroup}[1]{\mathrm{A}_{#1}}

\newcommand{\dihedralgroup}[1]{\mathrm{D}_{#1}}
\newcommand{\cyclicgroup}[1]{\mathrm{C}_{#1}}
\newcommand{\kleinfour}{\mathrm{V}_4}
\newcommand{\normalizer}[2][]{\mathrm{N}_{#1}(#2)}

\newcommand{\discalg}[1]{\Delta_{#1}}

\newcommand\midotimes{\mathop{\textstyle\bigotimes}}
\newcommand\gclose[4][]{#2^{\otimes_{#1} #3}/#4}

\newtheorem{theorem}{Theorem}[section] 
\crefname{theorem}{Theorem}{Theorems}

\newtheorem{lemma}[theorem]{Lemma}
\crefname{lemma}{Lemma}{Lemmas}

\newtheorem{proposition}[theorem]{Proposition}
\crefname{proposition}{Proposition}{Propositions}

\newtheorem{corollary}[theorem]{Corollary}
\crefname{corollary}{Corollary}{Corollaries}

\theoremstyle{definition} 

\newtheorem{definition}[theorem]{Definition}
\crefname{definition}{Definition}{Definitions}

\newtheorem{remark}[theorem]{Remark}
\crefname{remark}{Remark}{Remarks}

\newtheorem{example}[theorem]{Example}
\crefname{example}{Example}{Examples}

\author{Owen Biesel}

\begin{document}
\title{Galois closure data for extensions of rings}
\maketitle
\abstract{To generalize the notion of Galois closure for separable field extensions, we devise a notion of $G$-closure for algebras of commutative rings $R\to A$, where $A$ is locally free of rank $n$ as an $R$-module and $G$ is a subgroup of $\permgroup{n}$. 
A \emph{$G$-closure datum} for $A$ over $R$ is an $R$-algebra homomorphism $\phi:\fixpower{A}{n}{G}\to R$ satisfying certain properties, and we associate to a closure datum $\phi$ a \emph{closure algebra} $A^{\otimes n}\otimes_{\fixpower{A}{n}{G}} R$.
This construction reproduces the normal closure of a finite separable field extension if $G$ is the corresponding Galois group.
We describe G-closure data and algebras of finite \'etale algebras over a general connected ring $R$ in terms of the corresponding finite sets with continuous actions by the \'etale fundamental group of $R$.
We show that if $2$ is invertible, then $\altgroup{n}$-closure data for free extensions correspond to square roots of the discriminant, and that $\dihedralgroup{4}$-closure data for quartic monogenic extensions correspond to roots of the cubic resolvent.
This is an updated and revised version of the author's Ph.D.\ thesis.}
\tableofcontents

\section{Introduction}


In Manjul Bhargava's groundbreaking series \emph{Higher composition laws}, he introduced an operation on rank-$n$ $\Z$-algebras called \emph{the $\permgroup{n}$-closure} in order to parameterize cubic, quartic, and quintic rings (see \cite[2.1]{Bha04III}).
Later, Bhargava and Matthew Satriano extended this operation (modulo allowing torsion) in \cite{Bha14} to rank-$n$ algebras over arbitrary base rings; their new $S_n$-closure operation reduces to the Galois closure for finite separable field extensions with associated Galois group $S_n$, and also commutes with base change.  
The paper closes by asking whether similar $G$-closure operations exist for other permutation groups $G\subset S_n$.  
We answer Yes, provided that one is willing to parameterize such $G$-closures with what we call \emph{$G$-closure data}. 


If $L$ is a degree-$n$ separable extension of a field $K$ with separable closure $\bar K$, we can define the Galois closure $N$ of $L$ to be the minimal subfield of $\bar K$ containing the images of all field homomorphisms $L\to \bar K$.
Then the Galois group $G = \Gal(N/K)$ is a permutation group: it comes with an action on the $n$-element set of homomorphisms $L\to N$ over $K$.
Choosing an ordering of these $n$ homomorphisms, we identify $G$ with a subgroup of $\permgroup{n}$ and can compile the $n$ homomorphisms into a single $G$-equivariant $K$-algebra homomorphism
\[
L^{\otimes n}:= L\otimes_K L \otimes_K \dots \otimes_K L\to N,
\]
where $G$ acts on $N$ by definition but on $L^{\otimes n}$ via the action of $\permgroup{n}$ on the tensor factors.
In particular, we obtain a homomorphism between the $G$-invariants of the two $K$-algebras,
\[
\phi: \fixpower{L}{n}{G}\to N^G = K.
\]
We will later see that this homomorphism is a \emph{$G$-closure datum} for $L$ over $K$, because of where it sends certain elements of $(L^{\otimes n})^G$.
For each element $\ell\in L$, denote the $k$th elementary symmetric polynomial in the elements
\begin{align*}
 \conjugate{\ell}{1} &= \ell\otimes 1\otimes 1\otimes\dots\otimes 1\\
 \conjugate{\ell}{2} &= 1\otimes\ell\otimes 1\otimes\dots\otimes 1\\
 \dots &\\
 \conjugate{\ell}{n} &= 1\otimes1\otimes\dots\otimes1\otimes \ell
\end{align*}
by $e_k(\ell)$.  
Then $\phi(e_k(\ell))$ is the $k$th elementary symmetric polynomial in the $n$ conjugates of $\ell$ in $N$.
In particular, $\phi$ sends $e_1(\ell)$ to the sum of $\ell$'s $n$ conjugates, that is, the trace of $\ell$.
 And $\phi(e_n(\ell))$ is the product of $\ell$'s $n$ conjugates, the norm of $\ell$.

More generally, if $\ell$ is an element of $L$, we can regard multiplication by $\ell$ as a $K$-linear map $L\to L$.  
This linear map corresponds to an $n\times n$ matrix $M_\ell$ with entries in $K$, for each choice of $K$-basis for $L$.  
The characteristic polynomial of $M$ is independent of this choice of basis, so its coefficients are elements of $K$ that depend only on $\ell$.  
Write this characteristic polynomial as
\[\charpoly{\ell}(\lambda) := \det(\lambda I - M_\ell) = \lambda^n - s_1(\ell)\lambda^{n-1} + s_2(\ell)\lambda^{n-2} - \ldots + (-1)^n s_n(\ell).\]
Then the homomorphism $\phi: (L^{\otimes n})^G\to K$ sends $e_k(\ell)$ to $s_k(\ell)$ for each $k\in\set{n}$; this is the defining feature of a closure datum.

Namely, the concept of characteristic polynomial extends to the following setting: let $R$ be a ring and $A$ a rank-$n$ $R$-algebra, i.e.\ an algebra that is locally free of rank $n$ as an $R$-module.
(Note: in this paper all rings and algebras in this paper are commutative by assumption; thus an $R$-algebra is just a ring $A$ with a ring homomorphism $R\to A$.)
Then for each $a\in A$, the multiplication-by-$a$ homomorphism $A\to A$ locally corresponds to action of an $n\times n$-matrix on $R^n$, and the characteristic polynomials of these matrices glue to a well-defined polynomial with coefficients in $R$, which we again write as
\[\charpoly{a}(\lambda) = \lambda^n - s_1(a)\lambda^{n-1} + s_2(a)\lambda^{n-2} - \ldots + (-1)^n s_n(a).\]
Then a \emph{$G$-closure datum for $A$ over $R$} is an $R$-algebra homomorphism $\phi:\fixpower{A}{n}{G}\to R$ that sends $e_k(a)$ to $s_k(a)$ for each $a\in A$ and each $k\in\set{n}$.

A given ring $R$ and algebra $A$ may have $G$-closure data for only some groups $G$; we will see in \cref{main-etale} that if $K\into L$ is a finite separable field extension, the only closure data are the above $\phi$ for various orderings of $\Hom_K(L,N)$, along with their restrictions to the algebras of invariants under larger subgroups of $\permgroup{n}$.

Now given such a $\phi:\fixpower{L}{n}{G}\to K$ coming from a finite separable field extension $K\into L$ with Galois closure $N$, consider the following commutative square:
\[\begin{tikzcd}
 \fixpower{L}{n}{G} \arrow{r}{\phi}\arrow{d} & K \arrow{d}\\
 L^{\otimes n} \arrow{r} & N
\end{tikzcd}\]
In fact, the square is a tensor product diagram, that is, $L^{\otimes n}\otimes_{\fixpower{L}{n}{G}} K\cong N$.
In general, given a $G$-closure datum $\phi$ for $A$ over $R$, we will associate to it the \emph{closure algebra} $A^{\otimes n}\otimes_{\fixpower{A}{n}{G}} R$, thus generalizing the normal closure in the case of fields.


The organization of this paper is as follows:
In \cref{section_definitions}, we phrase the definition of closure datum in terms of the \emph{Ferrand homomorphism} associated to a rank-$n$ algebra, in order to make several results easier to prove.
In \cref{section_relationships}, we consider the various ways of producing some closure data from others, by varying the group, base ring, or algebra.
We also introduce a notion of two closure data being isomorphic, and discuss the question of when a minimal closure datum is unique up to isomorphism.
\Cref{section_etale} demonstrates that this theory of closure data reduces to ordinary Galois theory for finite \'etale algebras.
In \cref{section_products} we show that given a $G$-closure datum on an $R$-algebra $A$, and an $H$-closure datum on an $R$-algebra $B$, we get a $G\times H$-closure datum on $A\times B$, and that the resulting closure algebra is the tensor product of those for $A$ and $B$.

In \cref{an-closures} we use the results of \cite{Bie15} to classify $\altgroup{n}$-closure data.
Namely, for each rank-$n$ $R$-algebra $A$ there is a rank-$2$ $R$-algebra $\discalg{A/R}$ such that $\altgroup{n}$-closure data for $A$ over $R$ are in one-to-one correspondence with $R$-algebra homomorphisms $\discalg{A/R}\to R$.
The main theorem of that section is \cref{sqrt-disc}, which implies that if $R$ is a ring in which $2$ is a unit, and if $A$ is a rank-$n$ $R$-algebra that is free as an $R$-module, then $A$ has an $\altgroup{n}$-closure datum if and only if the discriminant of $A$ is a square in $R$.

Finally, in \cref{section_monogenic} we explore the case of \emph{monogenic} algebras, i.e.\ those rank-$n$ $R$-algebras of the form $A\cong R[x]/(f(x))$.
Cataloguing the $G$-closure data for such an $A$ over $R$ is analogous to identifying the Galois group of $f$.
We show first that if $G$ is an intransitive permutation group, then every $G$-closure datum for $A$ over $R$ yields a nontrivial factorization of $f$; thus if $f$ is irreducible then all closure data are with respect to transitive subgroups of $\permgroup{n}$.
Finally, we show that under mild hypotheses on $R$ and $G$, the $G$-closure data for $A$ over $R$ correspond to homomorphisms $\fixpower{R[x]}{n}{G}\to R$ sending each $e_k(x)$ to $s_k(a)$, where $a\in A$ corresponds to the element $x\in R[x]/(f(x))$.
We then use this correspondence to show that $\dihedralgroup{4}$-closure data for a quartic polynomial $f$ correspond bijectively to roots of $f$'s \emph{cubic resolvent}, with no assumptions on the base ring $R$.

\begin{remark}
This paper is a condensed version of the author's Ph.D.\ thesis \cite{Bie13} under Manjul Bhargava, and as such, the thanks expressed there to him and everyone else who provided invaluable support to that project still apply.
However, the sections on universally norm-preserving homomorphisms and product algebras in this paper are new, and nearly every definition, statement, and proof has been revised from its original version.
The author would therefore like to thank Maarten Derickx, Bas Edixhoven, Alberto Gioia, Lenny Taelman, and Hendrik Lenstra for their many helpful insights during this revision process. 
\end{remark}

\section{The Ferrand homomorphism and closure data}
\label{section_definitions}


We begin by reviewing the context in which our variant on Galois closures makes sense:

\begin{definition}
 Let $R$ be a ring, $M$ an $R$-module, and $n$ a natural number.  
 We say that $M$ is \emph{locally free of rank $n$} if the unit ideal of $R$ is generated by the set of all $r\in R$ such that the localization $M_r$ is free of rank $n$ as an $R_r$-module.
 Such a module is automatically projective and finitely generated.
 An \emph{$R$-algebra of rank $n$} is an $R$-algebra that is locally free of rank $n$ as an $R$-module.
\end{definition}


Recall from \cite[Def.\ 2.5]{Bie15} that for each pair $(R,A)$ with $R$ a ring and $A$ an $R$-algebra of rank $n$, there is a canonical $R$-algebra homomorphism $\fixpower{A}{n}{\permgroup{n}}\to R$, which is denoted $\Ferrand[R]{A}$ and called the \emph{Ferrand homomorphism} for $A$ over $R$.
Together, the Ferrand homomorphisms for various $R$ and $A$ have these properties:
\begin{enumerate}
 \item For each ring $R$ and $R$-algebra $A$ of rank $n$, and for each $a\in A$, we have
 \[\Ferrand[R]{A}(a\otimes\dots\otimes a) = \norm[R]{A}(a),\]
 the norm of $a$, i.e.\ the element of $R$ such that for every $a_1\wedge\dots\wedge a_n\in\extpower^n A$, we have
 \[(aa_1)\wedge(aa_2)\wedge\dots\wedge(aa_n) = \norm[R]{A}(a) (a_1\wedge\dots\wedge a_n).\]
 \item The Ferrand homomorphisms \emph{commute with base change}: if $R$ is a ring and $A$ is an $R$-algebra of rank $n$, and if furthermore $R'$ is any $R$-algebra and we denote the rank-$n$ $R'$-algebra $R'\otimes_R A$ by $A'$, then the following square of $R'$-algebra homomorphisms commutes:
 \[\begin{tikzcd}
  R'\otimes_R\fixpower{A}{n}{\permgroup{n}} \arrow{r}{\sim}\arrow{d}[swap]{{\id_{R'}\otimes\, \Ferrand[R]{A}}} 
  & \fixpower[R']{A'}{n}{\permgroup{n}}\arrow{d}{{\Ferrand[R']{A'}}}
  \\
  R'\otimes_R R \arrow{r}{\sim} & R'
   \end{tikzcd}\]
\end{enumerate}


As a consequence of this abstract characterization of the Ferrand homomorphisms, we find that for each element $a$ of a rank-$n$ $R$-algebra $A$, the Ferrand homomorphism $\Ferrand[R]{A}$ applied coefficientwise to $(x-a)\otimes(x-a)\otimes\dots\otimes(x-a)$ gives the characteristic polynomial $\charpoly{a}(x) := \norm[{R[x]}]{A[x]}(x-a)$ of $a$.
 In other words,
\begin{lemma}\label{ek-to-sk}
Denote the coefficient of $(-1)^k x^{n-k}$ in $(x-a)\otimes\dots\otimes(x-a)$ by $e_k(a)$, the $k$-th elementary symmetric polynomial evaluated in the $n$ elements $\conjugate{a}{1} = a\otimes 1\otimes\dots\otimes 1$ up to $\conjugate{a}{n} = 1\otimes\dots\otimes 1\otimes a$ in $A^{\otimes n}$.
 And denote the coefficient of $(-1)^k x^{n-k}$ in $\norm[{R[x]}]{A[x]}(x-a) = \charpoly{a}(x)$ by $s_k(a)\in R$.
 Then
 \[\Ferrand[R]{A}\bigl(e_k(a)\bigr) = s_k(a).\]
\end{lemma}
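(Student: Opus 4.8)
The plan is to introduce a polynomial variable so that the statement reduces to a single application of each of the two listed properties of the Ferrand homomorphism. I base change from $R$ to $R[x]$: since $A$ is locally free of rank $n$ over $R$, the algebra $A[x] := R[x]\otimes_R A$ is locally free of rank $n$ over $R[x]$, and property (2) provides a commutative square identifying $\Ferrand[{R[x]}]{A[x]}$ with $\id_{R[x]}\otimes\Ferrand[R]{A}$ under the canonical isomorphism $\fixpower[{R[x]}]{A[x]}{n}{\permgroup{n}}\cong R[x]\otimes_R\fixpower{A}{n}{\permgroup{n}}$. Chasing $1\otimes z$ for $z\in\fixpower{A}{n}{\permgroup{n}}$ around this square shows that $\Ferrand[{R[x]}]{A[x]}$ agrees with $\Ferrand[R]{A}$ on (the image of) $\fixpower{A}{n}{\permgroup{n}}$, with values in $R\subseteq R[x]$.

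Next I unwind the two descriptions of $e_k(a)$ given in the statement. Expanding the product $(x-a)\otimes\dots\otimes(x-a)$ in $A[x]^{\otimes_{R[x]} n}\cong(A^{\otimes n})[x]$ and collecting powers of $x$ — a subset $T\subseteq\set{n}$ of tensor slots contributing the factor $-a$ yields the term $(-1)^{|T|}x^{n-|T|}\prod_{i\in T}\conjugate{a}{i}$ — shows that the coefficient of $(-1)^k x^{n-k}$ is $e_k(a)=\sum_{|T|=k}\prod_{i\in T}\conjugate{a}{i}$, which is visibly symmetric and so lies in $\fixpower{A}{n}{\permgroup{n}}$. Thus in $(A^{\otimes n})[x]$ we have the identity $(x-a)^{\otimes n} = \sum_{k=0}^n(-1)^k x^{n-k}e_k(a)$.

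Now I apply property (1) to the element $x-a$ of the rank-$n$ $R[x]$-algebra $A[x]$, together with the definition $\charpoly{a}(x)=\norm[{R[x]}]{A[x]}(x-a)$, to get
\[\Ferrand[{R[x]}]{A[x]}\bigl((x-a)^{\otimes n}\bigr) = \norm[{R[x]}]{A[x]}(x-a) = \charpoly{a}(x) = \sum_{k=0}^n(-1)^k x^{n-k}s_k(a).\]
On the other hand, applying the $R[x]$-algebra homomorphism $\Ferrand[{R[x]}]{A[x]}$ to the identity from the previous paragraph and then invoking the first paragraph gives
\begin{align*}
\Ferrand[{R[x]}]{A[x]}\bigl((x-a)^{\otimes n}\bigr)
&= \sum_{k=0}^n(-1)^k x^{n-k}\,\Ferrand[{R[x]}]{A[x]}\bigl(e_k(a)\bigr) \\
&= \sum_{k=0}^n(-1)^k x^{n-k}\,\Ferrand[R]{A}\bigl(e_k(a)\bigr).
\end{align*}
Equating coefficients of $x^{n-k}$ in these two expressions for the same polynomial in $R[x]$ yields $\Ferrand[R]{A}(e_k(a)) = s_k(a)$.

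The routine parts are the preservation of local freeness under base change and the bookkeeping in the expansion of $(x-a)^{\otimes n}$; the step meriting care is transporting the base-change square of property (2) through the canonical isomorphisms so as to conclude that $\Ferrand[{R[x]}]{A[x]}$ restricts to $\Ferrand[R]{A}$ on $\fixpower{A}{n}{\permgroup{n}}$, which is what ties the two computations together.
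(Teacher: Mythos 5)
Your proof is correct and follows exactly the approach the paper sketches: the paper states the lemma immediately after observing that applying $\Ferrand[R]{A}$ coefficientwise to $(x-a)^{\otimes n}$ yields $\charpoly{a}(x)$, which is precisely what you establish by base-changing along $R\to R[x]$ (property (2)) and then invoking the norm property (property (1)) on the element $x-a\in A[x]$. You have simply supplied the routine bookkeeping — the expansion of $(x-a)^{\otimes n}$ and the compatibility of the Ferrand homomorphisms under the base change — that the paper leaves implicit.
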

Properties 1 and 2 above identify the Ferrand homomorphisms as those $R$-algebra homorphisms $\fixpower{A}{n}{\permgroup{n}}\cong\Gamma_R^n(A)\to R$ corresponding to the multiplicative degree-$n$ polynomial law $\norm[R]{A}\colon A\to R$; this is Ferrand's original characterization of the Ferrand homomorphisms in \cite{Fer98}.
But by the following lemma, the description in \cref{ek-to-sk} also uniquely characterizes the Ferrand homomorphism $\Ferrand[R]{A}$ for each ring $R$ and $R$-algebra $A$:

\begin{lemma}\label{fixpower-generators}
 Let $R$ be a ring and $A$ a projective finitely-generated $R$-algebra.
 Then $\{e_k(a): a\in A\text{ and }k\in\{1,\dots,n\}\}$ generates $\fixpower{A}{n}{\permgroup{n}}$ as an $R$-algebra.
\end{lemma}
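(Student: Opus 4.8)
The plan is to reduce the statement to a completely explicit computation in a free algebra and then prove it by induction. First I would reduce to the case in which $A$ is free over $R$ on a basis of the form $\basis_1=1,\basis_2,\dots,\basis_m$. Since $A$ is projective and finitely generated it is locally free, and at each prime $\ideal p$ of $R$ the element $1$ belongs to a basis of the free $R_{\ideal p}$-module $A_{\ideal p}$ (it is not in $\ideal p\,A_{\ideal p}$, since $A\otimes_R k(\ideal p)$ is a nonzero ring), so the set of $f\in R$ for which $A_f$ is $R_f$-free with $1$ among a basis generates the unit ideal. The formation of $\fixpower{A}{n}{\permgroup{n}}$ commutes with the flat base change $R\to R_f$ (invariants under a finite group commute with flat base change), and so does the passage to the $R$-subalgebra $B$ generated by the $e_k(a)$: since $e_k(ra)=r^ke_k(a)$, any $a/f^{\,j}\in A_f$ has $e_k(a/f^{\,j})=f^{-jk}e_k(a)\in B_f$, whence $B_f$ is exactly the subalgebra of $\fixpower[R_f]{A_f}{n}{\permgroup{n}}$ generated by all $e_k(a')$, $a'\in A_f$. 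As the inclusion $B\subseteq\fixpower{A}{n}{\permgroup{n}}$ is an equality if and only if it is one after localizing at each member of a set generating the unit ideal, it suffices to treat the free case.

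So assume $A$ is free on $\basis_1=1,\basis_2,\dots,\basis_m$. Then $A^{\otimes n}$ is free on the monomials $\basis_{j_1}\otimes\cdots\otimes\basis_{j_n}$, which $\permgroup{n}$ merely permutes, so $\fixpower{A}{n}{\permgroup{n}}$ is $R$-free on the orbit sums $m_\alpha$, indexed by $\alpha\in\N^m$ with $\alpha_1+\cdots+\alpha_m=n$, where $m_\alpha$ is the sum of those monomials whose multiset of indices has $\alpha_i$ entries equal to $i$. I would prove $m_\alpha\in B$ for every such $\alpha$ by induction on $k_\alpha:=\alpha_2+\cdots+\alpha_m=n-\alpha_1$, the number of tensor slots not equal to $1$; the base case $k_\alpha=0$ is $m_{(n,0,\dots,0)}=1\in B$.

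For the inductive step let $i_1<\cdots<i_s$ be the indices in $\{2,\dots,m\}$ with $d_t:=\alpha_{i_t}\ge 1$, so $d_1+\cdots+d_s=k_\alpha\le n$, and consider the product $P:=e_{d_1}(\basis_{i_1})\cdots e_{d_s}(\basis_{i_s})$ formed inside the ring $\fixpower{A}{n}{\permgroup{n}}$, which lies in $B$. Writing $e_{d_t}(\basis_{i_t})$ as the sum over $d_t$-element subsets $S_t\subseteq\{1,\dots,n\}$ of the monomial carrying $\basis_{i_t}$ in the slots of $S_t$ and $1$ elsewhere, and multiplying slotwise, one gets $P=\sum_{(S_1,\dots,S_s)}(\text{the monomial whose }j\text{th slot is }\prod_{t\,:\,j\in S_t}\basis_{i_t})$. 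The tuples with $S_1,\dots,S_s$ pairwise disjoint contribute exactly $m_\alpha$, since then $S_1\sqcup\cdots\sqcup S_s$ carries the $\basis_{i_t}$'s while the remaining $\alpha_1$ slots carry $1=\basis_1$; for every other tuple some slot carries a product of two or more distinct $\basis_{i_t}$, and expanding these via the structure constants $\basis_i\basis_j=\sum_\ell c_{ij\ell}\basis_\ell$ rewrites that contribution as an $R$-linear combination of monomials having fewer than $k_\alpha$ slots different from $1$, hence — using that $P-m_\alpha$ is $\permgroup{n}$-invariant — as an $R$-linear combination of orbit sums $m_\beta$ with $k_\beta<k_\alpha$. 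These lie in $B$ by the inductive hypothesis, so $m_\alpha=P-(\text{those lower terms})\in B$, which closes the induction.

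The step I expect to be the main obstacle is the combinatorial bookkeeping just described: one must verify carefully that every ``overlap'' contribution to $P$ involves only orbit sums strictly smaller in the filtration by $k_\alpha$, so that the induction terminates — and this hinges on having arranged $1$ to be one of the basis vectors, the feature that makes each $e_k(\basis_i)$ a single orbit sum rather than an awkward linear combination.
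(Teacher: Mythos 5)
Your proof is correct in substance but takes a genuinely different route from the paper's. The paper cites Vaccarino's theorem (\cite{Vac05}, Theorem~1) for the polynomial-ring case $A = R[x_1,\dots,x_m]$, then descends to a general projective finitely-generated $A$ via a split surjection $R[x_1,\dots,x_m]\onto A$: since the splitting exists as $R$-modules, applying $\fixpower{(\cdot)}{n}{\permgroup{n}}$ preserves surjectivity, and the images of the generators $e_k(f)$ are exactly the $e_k(a)$. You instead localize to the free case with $1$ in the basis, note that $\fixpower{A}{n}{\permgroup{n}}$ is then freely spanned by orbit sums $m_\alpha$, and prove $m_\alpha\in B$ by induction on the number of non-unit tensor slots, using $P = \prod_t e_{d_t}(\basis_{i_t})$ and the observation that all ``overlap'' contributions lower the filtration degree. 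What your approach buys is a self-contained, elementary proof that does not depend on Vaccarino's result (indeed it essentially reproves a special case of it); what the paper's approach buys is brevity and a cleaner descent mechanism through projectivity rather than localization.

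One small imprecision: you justify that $1$ is part of an $R_{\ideal p}$-basis by claiming ``$A\otimes_R k(\ideal p)$ is a nonzero ring,'' but this can fail --- projective finitely-generated $R$-algebras can have rank $0$ at some primes (e.g., $R = R_1\times R_2$, $A = R_1$). You should either restrict attention to the open set where $A$ has positive rank, handling the complementary locus by the trivial observation that $A_f = 0$ forces $\fixpower{(A_f)}{n}{\permgroup{n}} = 0 = B_f$, or simply allow the empty basis as a degenerate case of your combinatorial argument. Either patch is immediate, and the rest of the argument --- in particular the bookkeeping in the inductive step, that every non-disjoint tuple $(S_1,\dots,S_s)$ has $|\bigcup_t S_t| < k_\alpha$ and hence contributes only monomials with strictly fewer non-unit slots --- is sound.
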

\begin{proof}
 The case of $A = R[x_1,\dots,x_m]$ is covered by \cite[Theorem 1]{Vac05}.
 For a general projective $R$-algebra $A$ generated by a finite set $\{a_1,\dots,a_m\}$, form a surjection $R[x_1,\dots,x_m]\onto A$ sending each $x_i$ to $a_i$.
 Since $A$ is projective as an $R$-module, this surjection has a right inverse in the category of $R$-modules.
 Applying the $\fixpower{(\cdot)}{n}{\permgroup{n}}$ endofunctor of $R$-modules, then, we find that the $R$-algebra homomorphism $\fixpower{R[x_1,\dots,x_m]}{n}{\permgroup{n}}\to \fixpower{A}{n}{\permgroup{n}}$ is also a surjection.
 Since $\fixpower{R[x_1,\dots,x_m]}{n}{\permgroup{n}}$ is generated by elements of the form $e_k(f)$ with $f\in R[x_1,\dots,x_m]$, its quotient $R$-algebra $\fixpower{A}{n}{\permgroup{n}}$ is generated by the collection of their images, namely 
 $\{e_k(f(a_1,\dots,a_m)): k\in\set{n}\text{ and }f\in R[x_1,\dots,x_m]\} = \{e_k(a): k\in\set{n}\text{ and }a\in A\}$.
\end{proof}

\begin{corollary}\label{ferrand-uniqueness}
 Let $R$ be a ring and $A$ an $R$-algebra of rank $n$.
 The Ferrand homomorphism $\Ferrand[R]{A}\colon \fixpower{A}{n}{\permgroup{n}}\to R$ is the unique $R$-algebra homomorphism that sends $e_k(a)$ to $s_k(a)$ for all $a\in A$ and $k\in\set{n}$.
\end{corollary}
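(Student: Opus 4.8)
The plan is to obtain this as an immediate consequence of the two preceding lemmas. For existence, \cref{ek-to-sk} already asserts that the Ferrand homomorphism $\Ferrand[R]{A}$ itself has the stated property, $\Ferrand[R]{A}(e_k(a)) = s_k(a)$ for all $a\in A$ and $k\in\set{n}$; this applies to our situation because an $R$-algebra of rank $n$ is locally free of rank $n$, hence projective and finitely generated over $R$, so that both the algebra $\fixpower{A}{n}{\permgroup{n}}$ and the characteristic polynomials $\charpoly{a}(x) = \norm[{R[x]}]{A[x]}(x-a)$ are defined.

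For uniqueness, I would take an arbitrary $R$-algebra homomorphism $\psi\colon\fixpower{A}{n}{\permgroup{n}}\to R$ satisfying $\psi(e_k(a)) = s_k(a)$ for all such $a$ and $k$, and compare it with $\Ferrand[R]{A}$. By construction the two homomorphisms agree on every element of the set $S = \{e_k(a) : a\in A,\ k\in\set{n}\}$. By \cref{fixpower-generators} — whose hypothesis is again met since $A$ is projective and finitely generated — the set $S$ generates $\fixpower{A}{n}{\permgroup{n}}$ as an $R$-algebra. Since two $R$-algebra homomorphisms out of an $R$-algebra that agree on a generating set agree everywhere, we conclude $\psi = \Ferrand[R]{A}$.

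There is really no substantial obstacle here; the only point that deserves a line of care is recording that the hypothesis ``$A$ has rank $n$'' is exactly what is needed to invoke \cref{ek-to-sk} (existence of the characteristic polynomials) and \cref{fixpower-generators} (projectivity and finite generation) at the same time.
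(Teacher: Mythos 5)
Your proof is correct and is exactly the argument the paper intends: existence is Lemma~\ref{ek-to-sk}, and uniqueness follows because Lemma~\ref{fixpower-generators} shows the elements $e_k(a)$ generate $\fixpower{A}{n}{\permgroup{n}}$ as an $R$-algebra, so any two $R$-algebra homomorphisms agreeing on them must coincide. Your remark that the rank-$n$ hypothesis supplies both the projectivity/finite-generation needed for Lemma~\ref{fixpower-generators} and the existence of characteristic polynomials needed for Lemma~\ref{ek-to-sk} is a helpful clarification, not a deviation.
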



We now state the main definition of this paper:

\begin{definition}
 Let $R$ be a ring and $A$ a rank-$n$ $R$-algebra.  
 A \emph{closure datum for $A$ over $R$} is a pair $(G,\phi)$, where $G$ is a subgroup of $\permgroup{n}$ and $\phi$ is an $R$-algebra homomorphism $\fixpower{A}{n}{G}\to R$ that extends the Ferrand homomorphism $\Ferrand[R]{A}\colon \fixpower{A}{n}{\permgroup{n}}\to R$.
 
If $(G,\phi)$ is a closure datum for $A$ over $R$, then we also say that $\phi$ is a \emph{$G$-closure datum for $A$ over $R$}.
Furthermore, given a $G$-closure datum $\phi$, we denote the $R$-algebra given by the tensor product
\[ A^{\otimes n} \midotimes_{\mathclap{\fixpower{A}{n}{G}}} R\ \cong\,A^{\otimes n}/\bigl(\alpha - \phi(\alpha): \alpha\in \fixpower{A}{n}{G}\bigr)\]
by $\gclose{A}{n}{\phi}$ and call it the \emph{$G$-closure} (or \emph{closure algebra}) of $A$ over $R$ associated with $\phi$. 
 \end{definition}


\begin{example}
There's exactly one $\permgroup{n}$-closure for each rank-$n$ $R$-algebra $A$: the one associated with $\Ferrand[R]{A}$ itself.
Its associated $R$-algebra is 
\begin{align*} 
\gclose{A}{n}{\Ferrand[R]{A}}&\cong A^{\otimes n}/\bigl(\alpha - \Ferrand[R]{A}(\alpha): \alpha\in \fixpower{A}{n}{G}\bigr)\\
&= A^{\otimes n}/\bigl(e_k(a) - s_k(a): a\in A\text{ and }k\in\{1,\dots,n\}\bigr),
\end{align*}
where we have used \cref{ferrand-uniqueness} to simplify the ideal presentation.
This last quotient is exactly the $\permgroup{n}$-closure of $A$ over $R$ defined by Bhargava and Satriano in \cite{Bha14}.
\end{example}


\section{Relationships between closure data}
\label{section_relationships}

In this section we demonstrate some ways of obtaining one closure datum from another, either by replacing the group with a larger subgroup of $\permgroup{n}$, changing the base from $R$ to an arbitrary $R$-algebra $R'$, or pulling back closure data for $B$ to closure data for $A$ along certain $R$-algebra homomorphisms $A\to B$.

\subsection{Inducing closure data}

Varying the group gives us the most elementary means of producing new closure data:


\begin{proposition}
 Let $R$ be a ring and $A$ an $R$-algebra of rank $n$.
 If $(G,\phi)$ is a closure datum for $A$ over $R$, and $H$ is a subgroup of $\permgroup{n}$ containing $G$, then $(H, \phi|_{\fixpower{A}{n}{H}})$ is also a closure datum for $A$ over $R$.
\end{proposition}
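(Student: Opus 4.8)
The plan is to observe that enlarging the group only shrinks the algebra of invariants, so that the claimed datum is literally a restriction of $\phi$ along an inclusion of $R$-subalgebras of $A^{\otimes n}$, and then to check that such a restriction still extends the Ferrand homomorphism. First I would record the chain of inclusions: since $G \subseteq H \subseteq \permgroup{n}$, any element of $A^{\otimes n}$ fixed by a larger subgroup is a fortiori fixed by a smaller one, so
\[
\fixpower{A}{n}{\permgroup{n}} \subseteq \fixpower{A}{n}{H} \subseteq \fixpower{A}{n}{G}.
\]
Because the $\permgroup{n}$-action on $A^{\otimes n}$ is by $R$-algebra automorphisms, each of these fixed-point submodules is in fact an $R$-subalgebra, so the composite $\phi|_{\fixpower{A}{n}{H}}\colon \fixpower{A}{n}{H} \to R$ is a well-defined $R$-algebra homomorphism, being the restriction of $\phi$ to the $R$-subalgebra $\fixpower{A}{n}{H}$ of its domain $\fixpower{A}{n}{G}$.

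Next I would verify the one defining property of a closure datum, namely that $\phi|_{\fixpower{A}{n}{H}}$ extends the Ferrand homomorphism $\Ferrand[R]{A}\colon \fixpower{A}{n}{\permgroup{n}}\to R$. Since $\fixpower{A}{n}{\permgroup{n}}$ sits inside $\fixpower{A}{n}{H}$, restriction is transitive: restricting $\phi|_{\fixpower{A}{n}{H}}$ further to $\fixpower{A}{n}{\permgroup{n}}$ gives exactly the restriction of $\phi$ to $\fixpower{A}{n}{\permgroup{n}}$, which by hypothesis equals $\Ferrand[R]{A}$. Hence $(H,\phi|_{\fixpower{A}{n}{H}})$ is a closure datum for $A$ over $R$.

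I do not expect any real obstacle here; the statement is formal once the direction of the inclusions of invariant subalgebras is correctly identified. The only point deserving a sentence of care is the observation that $\fixpower{A}{n}{H}$ is an $R$-subalgebra (not merely an $R$-submodule) of $A^{\otimes n}$, which is what makes ``restriction of the $R$-algebra homomorphism $\phi$'' meaningful; this follows because $\permgroup{n}$ acts by $R$-algebra automorphisms on the tensor factors.
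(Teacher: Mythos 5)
Your proof is correct and follows essentially the same route as the paper's: observe that $\fixpower{A}{n}{\permgroup{n}}\subseteq\fixpower{A}{n}{H}\subseteq\fixpower{A}{n}{G}$ and that restriction is transitive, so $\phi|_{\fixpower{A}{n}{H}}$ still restricts to the Ferrand homomorphism because $\phi$ does. The extra sentence noting that the invariant submodules are in fact $R$-subalgebras is a reasonable explicit check, but the argument is otherwise identical.
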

\begin{proof}
 The only criterion to check is that $\phi|_{\fixpower{A}{n}{H}}$ restricts to the Ferrand homomorphism on $\fixpower{A}{n}{\permgroup{n}}$, but this is true because $\phi$ does.
\end{proof}

We say that the $G$-closure datum $\phi$ \emph{induces} the $H$-closure datum $\phi|_{\fixpower{A}{n}{H}}$ for each $H$ containing $G$.
In particular, the property of a given $R$-algebra having a $G$-closure is upward-closed with respect to $G$.
For this reason, ``having a $G$-closure'' can be thought of as roughly corresponding to ``having Galois group contained in $G$''---the analogue of \emph{the} Galois group, then, is a minimal group $G$ for which a $G$-closure datum exists. 
The following definition enriches this idea with the closure data:

\begin{definition}
A closure datum $(G,\phi)$ for an $R$-algebra $A$ of rank $n$ is called \emph{minimal} if it is not induced by any other closure datum, i.e., if the homomorphism $\phi\colon\fixpower{A}{n}{G}\to R$ cannot be extended to a homomorphism $\fixpower{A}{n}{H}\to R$ for any smaller subgroup $H\subsetneq G$.
\end{definition}


For a given ring and algebra, is there always a unique minimal closure datum?
The answer is typically ``No'' for trivial reasons: if $(G,\phi)$ is a closure datum for an $R$-algebra $A$ of rank $n$, and $\sigma\in\permgroup{n}$ is any permutation, then $(\sigma G \sigma^{-1}, \phi\circ(\id_A)^{\otimes \sigma^{-1}})$ is another closure datum.
(Here $(\id_A)^{\otimes\sigma^{-1}}$ is the automorphism of $A^{\otimes n}$ sending $\conjugate{a}{i}$ to $\conjugate{a}{\sigma^{-1}(i)}$.
It restricts to a map $\fixpower{A}{n}{\sigma G \sigma^{-1}}\to\fixpower{A}{n}{G}$.)
This describes an action of $\permgroup{n}$ on the set of closure data for a given $R$-algebra, and the resulting action groupoid gives us a notion of two closure data being isomorphic:

\begin{definition}
 Let $R$ be a ring and $A$ an $R$-algebra of rank $n$.
 An \emph{isomorphism of closure data} $(G, \phi)\to (H, \psi)$ for $A$ over $R$ is a permutation $\sigma\in\permgroup{n}$ for which $H = \sigma G\sigma^{-1}$ and for which $\psi = \phi\circ(\id_A)^{\otimes\sigma^{-1}}$.
\end{definition}

\begin{remark}\label{minimal-uniqueness}
 The question we should be asking, then, is whether every pair of minimal closure data is isomorphic in this sense.
We will show in \cref{section_etale} that this is the case if the ring is connected and the algebra is \'etale.
There are also more examples; for instance, every free quadratic $R$-algebra has this property if and only if $R$ is a domain, as we will show in \cref{domain-unique}.

The question of how unique minimal closure data need be has also been taken up by Riccardo Ferrario for rank-$4$ algebras in \cite{Fer14}, where he exhibits an example of a quartic algebra with multiple minimal closure data with respect to groups that are not even conjugate.
On the positive side, Maarten Derickx shows in forthcoming work that if $R$ is a characteristic-zero integrally closed domain, then for every finite-rank $R$-algebra $A$ the groups with minimal closure data for $A$ over $R$ are all conjugate.
He also shows that this can fail in positive characteristic.
\end{remark}

Note that with this definition of (iso)morphism of closure data, the operation of taking the closure associated to a closure datum is then a functor:

\begin{proposition}
 Let $R$ be a ring and $A$ an $R$-algebra of rank $n$, and let $\sigma\colon(G,\phi)\to(H,\psi)$ be an isomorphism of closure data for $A$ over $R$.
 Then the automorphism $(\id_A)^{\otimes\sigma}$ of $A^{\otimes n}$ descends to an $R$-algebra isomorphism of the associated closure algebras $\gclose{A}{n}{\phi} \simto \gclose{A}{n}{\psi}$.
\end{proposition}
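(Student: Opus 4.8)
The plan is to pass to the explicit ideal presentations of the two closure algebras and to check that the automorphism $\tau := (\id_A)^{\otimes\sigma}$ of $A^{\otimes n}$ carries the defining ideal of one onto that of the other. Write $I_\phi := \bigl(\alpha - \phi(\alpha) : \alpha\in\fixpower{A}{n}{G}\bigr)$ and $I_\psi := \bigl(\beta - \psi(\beta) : \beta\in\fixpower{A}{n}{H}\bigr)$, so that $\gclose{A}{n}{\phi} = A^{\otimes n}/I_\phi$ and $\gclose{A}{n}{\psi} = A^{\otimes n}/I_\psi$ by definition. It then suffices to prove the equality $\tau(I_\phi) = I_\psi$: granting this, the $R$-algebra automorphism $\tau$ of $A^{\otimes n}$ descends to an $R$-algebra isomorphism of the quotients $A^{\otimes n}/I_\phi \simto A^{\otimes n}/I_\psi$, which is exactly the asserted map. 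I would first record the elementary facts that $\sigma\mapsto(\id_A)^{\otimes\sigma}$ is a group homomorphism $\permgroup{n}\to\Aut_R(A^{\otimes n})$, that each such automorphism is $R$-linear and hence fixes the image of $R$ in $A^{\otimes n}$, and that $\tau = (\id_A)^{\otimes\sigma}$ restricts to an isomorphism $\fixpower{A}{n}{G}\xrightarrow{\sim}\fixpower{A}{n}{H}$ (this is the inverse of the restriction of $(\id_A)^{\otimes\sigma^{-1}}$ noted just before the definition of isomorphism of closure data: if $\alpha$ is $G$-invariant and $h = \sigma g\sigma^{-1}\in H$ with $g\in G$, then $(\id_A)^{\otimes h}\circ\tau = \tau\circ(\id_A)^{\otimes g}$ fixes $\alpha$, so $\tau(\alpha)$ is $H$-invariant).

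Next I would prove the inclusion $\tau(I_\phi)\subseteq I_\psi$. For a generator $\alpha - \phi(\alpha)$ of $I_\phi$ with $\alpha\in\fixpower{A}{n}{G}$, $R$-linearity of $\tau$ gives $\tau\bigl(\alpha - \phi(\alpha)\bigr) = \tau(\alpha) - \phi(\alpha)$. Since $\psi = \phi\circ(\id_A)^{\otimes\sigma^{-1}} = \phi\circ\tau^{-1}$, we have $\phi(\alpha) = \psi(\tau(\alpha))$, so $\tau(\alpha) - \phi(\alpha) = \tau(\alpha) - \psi(\tau(\alpha))$; and because $\tau(\alpha)\in\fixpower{A}{n}{H}$ by the previous paragraph, this is one of the generators of $I_\psi$. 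As $\tau$ is a ring homomorphism and these elements generate $I_\phi$, it follows that $\tau(I_\phi)\subseteq I_\psi$.

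Finally, I would apply the same reasoning to the inverse permutation. One checks that $\sigma^{-1}$ is an isomorphism of closure data $(H,\psi)\to(G,\phi)$, since $G = \sigma^{-1}H\sigma$ and $\phi = \psi\circ(\id_A)^{\otimes\sigma}$ (the latter is the identity $\psi\circ\tau = \phi$ rearranged). The argument of the previous paragraph then yields $\tau^{-1}(I_\psi)\subseteq I_\phi$, i.e. $I_\psi\subseteq\tau(I_\phi)$. Combining the two inclusions gives $\tau(I_\phi) = I_\psi$, and passing to quotients produces the $R$-algebra isomorphism $\gclose{A}{n}{\phi}\simto\gclose{A}{n}{\psi}$ induced by $(\id_A)^{\otimes\sigma}$, as desired. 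I do not expect any genuine obstacle here; the only point demanding care is the bookkeeping of variance — keeping track of which direction $(\id_A)^{\otimes\sigma}$ versus $(\id_A)^{\otimes\sigma^{-1}}$ moves the rings of invariants and how it intertwines $\phi$ with $\psi$ — which rests on $\sigma\mapsto(\id_A)^{\otimes\sigma}$ being a homomorphism (rather than an anti-homomorphism) and on the direction conventions fixed in the definition of isomorphism of closure data.
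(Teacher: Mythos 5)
Your proof is correct, and it is essentially the same argument as the paper's: the paper records the identity $\psi\circ\tau=\phi$ on $\fixpower{A}{n}{G}$ as a commutative diagram and then invokes the tensor-product presentation $\gclose{A}{n}{\phi}=A^{\otimes n}\otimes_{\fixpower{A}{n}{G}}R$, whereas you verify the equivalent identity on generators and use the ideal presentation $A^{\otimes n}/\bigl(\alpha-\phi(\alpha)\bigr)$, checking $\tau(I_\phi)=I_\psi$ by two inclusions. Your version is more explicit but rests on exactly the same commutativity; the one small stylistic point is that writing it via the universal property of the tensor product (as the paper does) gives the isomorphism and its inverse in one stroke, without needing to argue the second inclusion separately from $\sigma^{-1}$.
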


\begin{proof}
 That $\sigma\colon(G,\phi)\to(H,\psi)$ is an isomorphism of closure data reflects the commutativity of the following diagram:
 \[\begin{tikzcd}
  & A^{\otimes n}\arrow{rr}{(\id_A)^{\otimes\sigma}}[swap]{\sim}
   & & A^{\otimes n} \\
  \fixpower{A}{n}{G} \arrow[hook]{ru}\arrow{rd}{\phi}\arrow{rr}{(\id_A)^{\otimes\sigma}}[swap]{\sim} & & \fixpower{A}{n}{H}\arrow[hook]{ru}\arrow{rd}{\psi} & \\
   & R \arrow[equals]{rr} & & R
 \end{tikzcd}\]
 Therefore we obtain the desired isomorphism of the associated closure algebras
 \[\begin{tikzcd}
 \gclose{A}{n}{\phi} =\ds A^{\otimes n} \midotimes_{\mathclap{\fixpower{A}{n}{G}}} R \arrow{r}{\sim} & \ds A^{\otimes n} \midotimes_{\mathclap{\fixpower{A}{n}{H}}} R = \gclose{A}{n}{\psi}.\end{tikzcd}\qedhere\]
\end{proof}

\begin{remark}\label{group-actions}
Note that for a fixed ring $R$ with rank-$n$ algebra $A$, and for a fixed subgroup $G\subset\permgroup{n}$, the set of $G$-closure data for $A$ over $R$ carries a natural action by $\{\sigma\in\permgroup{n}:\sigma G\sigma^{-1} = G\}$, the normalizer $\normalizer[\permgroup{n}]{G}$ of $G$.
Those $\sigma$ that belong to $G$ itself act trivially on the set of $G$-closure data, but induce generally non-trivial automorphisms of the associated closure algebras.
To summarize, for a fixed ring $R$ and rank-$n$ algebra $A$:
\begin{itemize}
 \item The set of all closure data for $A$ over $R$ has a natural $\permgroup{n}$-action.
 \item For a fixed group $G\subset \permgroup{n}$, the set of $G$-closure data for $A$ over $R$ carries a natural action by the group $\normalizer[\permgroup{n}]{G}/G$.
 This orbits of this action are precisely the isomorphism classes of $G$-closure data.
 \item For each $G$-closure datum $\phi$, the associated closure algebra $\gclose{A}{n}{\phi}$ carries a natural action by $G$.
\end{itemize}
\end{remark}

\subsection{Universally norm-preserving homomorphisms}


We can also produce closure data by pulling it back along \emph{universally norm-preserving} $R$-algebra homomorphisms:

\begin{definition}
 Let $R$ be a ring and $A$ and $B$ be $R$-algebras of rank $n$.
 An $R$-algebra homomorphism $f\colon A\to B$ is called \emph{norm-preserving} if for all $a\in A$, we have $\norm{A}(a) = \norm{B}(f(a))$.
 We say that $f$ is \emph{universally} norm-preserving if for every $R$-algebra $S$, the $S$-algebra homomorphism $\id_{S}\otimes f: S\otimes_R A \to S\otimes_R B$ is norm-preserving.
\end{definition}


Here are two alternative characterizations of universally norm-preserving homomorphisms:

\begin{lemma}
 Let $R$ be a ring and $f\colon A\to B$ a homomorphism of $R$-algebras of rank $n$.  
 The following are equivalent:
 \begin{enumerate}
 \item The homomorphism $f$ is universally norm preserving.
 \item The following triangle of $R$-algebra homomorphisms commutes:
 \[\begin{tikzcd}[column sep=tiny]
  \fixpower{A}{n}{\permgroup{n}} \arrow{rd}[swap]{{\Ferrand{A}}}\arrow{rr}{\fixpower{f}{n}{\permgroup{n}}} & & \fixpower{B}{n}{\permgroup{n}} \arrow{ld}{{\Ferrand{B}}}\\
   & R & 
 \end{tikzcd}
 \]
 \item The homomorphism $f$ preserves characteristic polynomials, i.e., for all $a\in A$, we have $\charpoly{f(a)}(\lambda) = \charpoly{a}(\lambda)$, or equivalently, $s_k(f(a)) = s_k(a)$ for all $k\in \{1,\dots,n\}$. 
 \end{enumerate}
\end{lemma}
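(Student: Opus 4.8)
The plan is to prove the cycle of implications $(1)\Rightarrow(3)\Rightarrow(2)\Rightarrow(1)$, using the two listed properties of the Ferrand homomorphism together with \cref{ek-to-sk} and \cref{ferrand-uniqueness}. For $(1)\Rightarrow(3)$ I would specialize the hypothesis to the base change $R\to R[\lambda]$. Recall from the discussion preceding \cref{ek-to-sk} that $\charpoly{a}(\lambda)=\norm[{R[\lambda]}]{A[\lambda]}(\lambda-a)$. The homomorphism $\id_{R[\lambda]}\otimes f\colon A[\lambda]\to B[\lambda]$ is $f$ applied coefficientwise, so it carries $\lambda-a$ to $\lambda-f(a)$; norm-preservation over $R[\lambda]$ therefore reads $\charpoly{a}(\lambda)=\norm[{R[\lambda]}]{A[\lambda]}(\lambda-a)=\norm[{R[\lambda]}]{B[\lambda]}(\lambda-f(a))=\charpoly{f(a)}(\lambda)$, and comparing the coefficient of $\lambda^{n-k}$ gives $s_k(a)=s_k(f(a))$ for every $k\in\set{n}$.

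For $(3)\Rightarrow(2)$ I would invoke the uniqueness clause of \cref{ferrand-uniqueness}. The composite $\Ferrand[R]{B}\circ\fixpower{f}{n}{\permgroup{n}}$ is an $R$-algebra homomorphism $\fixpower{A}{n}{\permgroup{n}}\to R$. Because $f^{\otimes n}$ carries $\conjugate{a}{i}$ to $\conjugate{f(a)}{i}$, it carries the $k$-th elementary symmetric polynomial $e_k(a)$ to $e_k(f(a))$, so by \cref{ek-to-sk} the composite sends $e_k(a)$ to $\Ferrand[R]{B}(e_k(f(a)))=s_k(f(a))$, which equals $s_k(a)$ by hypothesis (3). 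Thus $\Ferrand[R]{B}\circ\fixpower{f}{n}{\permgroup{n}}$ is an $R$-algebra homomorphism sending every $e_k(a)$ to $s_k(a)$, hence equals $\Ferrand[R]{A}$ by \cref{ferrand-uniqueness}; this identity is precisely the commutativity of the triangle in (2).

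For $(2)\Rightarrow(1)$ I would base change along an arbitrary $R$-algebra $R\to S$, writing $A_S=S\otimes_R A$, $B_S=S\otimes_R B$, $f_S=\id_S\otimes f$. Tensoring the triangle of (2) with $S$ and using Property 2 (compatibility of the Ferrand homomorphisms with base change), together with the canonical isomorphism $S\otimes_R\fixpower{A}{n}{\permgroup{n}}\cong\fixpower[S]{A_S}{n}{\permgroup{n}}$, natural in the algebra and intertwining $\id_S\otimes\fixpower{f}{n}{\permgroup{n}}$ with $\fixpower[S]{f_S}{n}{\permgroup{n}}$, one obtains the analogous commuting triangle for $f_S$ over $S$. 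Evaluating that triangle on elements of the form $a\otimes\dots\otimes a$ with $a\in A_S$ and using Property 1 (that $\Ferrand[S]{A_S}$ sends $a\otimes\dots\otimes a$ to $\norm[S]{A_S}(a)$) yields $\norm[S]{A_S}(a)=\norm[S]{B_S}(f_S(a))$, so $f_S$ is norm-preserving; since $S$ was arbitrary, $f$ is universally norm-preserving.

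The step I expect to require the most care is the base change in $(2)\Rightarrow(1)$: that tensoring the triangle of (2) along $R\to S$ genuinely reproduces the triangle for $f_S$ over $S$ rests on the functor $\fixpower{(\cdot)}{n}{\permgroup{n}}\cong\Gamma_R^n(\cdot)$ commuting with base change naturally in its input, which I would cite as a standard property of the divided-power (equivalently, $\permgroup{n}$-invariant symmetric tensor) construction rather than reprove. Everything else is a direct assembly of \cref{ek-to-sk}, \cref{ferrand-uniqueness}, and Properties 1 and 2 of the Ferrand homomorphism.
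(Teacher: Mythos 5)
Your argument is correct and complete, but note that the paper itself does not prove this lemma in-line; it simply cites \cite[Prop.~7.1]{Bie15}. So there is no internal proof to compare against. Your proof is a clean self-contained assembly of the tools the paper does provide: \cref{ek-to-sk}, \cref{ferrand-uniqueness}, and Properties 1 and 2 of the Ferrand homomorphism. The cycle $(1)\Rightarrow(3)\Rightarrow(2)\Rightarrow(1)$ works exactly as you describe, and it is almost certainly the same shape of argument as in the cited reference.

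One small precision worth adding: in $(2)\Rightarrow(1)$, the base-change compatibility you invoke should not be cited as a property of the $\permgroup n$-invariant symmetric tensor construction \emph{in general}. For an arbitrary $R$-module $M$, the functor $\Gamma^n_R$ commutes with base change, but the comparison map $\Gamma^n_R(M)\to\fixpower{M}{n}{\permgroup n}$ need not be an isomorphism, and $\fixpower{(\cdot)}{n}{\permgroup n}$ need not commute with base change. What saves you is that $A$ is locally free of finite rank, so $A^{\otimes n}$ is $R[\permgroup n]$-locally a direct sum of permutation modules, and hence taking invariants commutes with $S\otimes_R(-)$. The paper itself appeals to exactly this in the proof of \cref{g-closure-base-change}, citing \cite[Prop.~3.6]{Bie15}, and you should point to the same fact rather than to a generic property of divided powers. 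With that adjustment your proof is airtight.
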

\begin{proof}
 See \cite[Prop. 7.1]{Bie15}.
\end{proof}

\begin{proposition}
 Let $R$ be a ring and $A$ and $B$ be $R$-algebras of rank $n$.
 If $f\colon A\to B$ is a universally norm-preserving homomorphism and $(G,\phi)$ is a closure datum for $B$, then $(G, \phi\circ\fixpower{f}{n}{G})$ is a closure datum for $A$.
 
 Furthermore, the homomorphism of $R$-algebras $f^{\otimes n}\colon A^{\otimes n}\to B^{\otimes n}$ descends to a homomorphism of the $G$-closures $\gclose{A}{n}{(\phi\circ\fixpower{f}{n}{G})}\to \gclose{B}{n}{\phi}$.
\end{proposition}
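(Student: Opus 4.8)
The plan is to check the two assertions in turn; the only non-formal ingredient is the previous lemma characterizing universally norm-preserving homomorphisms, and everything else is a diagram chase.

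First I would produce the new closure datum. Since $f^{\otimes n}\colon A^{\otimes n}\to B^{\otimes n}$ is built factorwise it is $\permgroup{n}$-equivariant, hence restricts to an $R$-algebra homomorphism $\fixpower{f}{n}{G}\colon\fixpower{A}{n}{G}\to\fixpower{B}{n}{G}$; composing with $\phi$ gives an $R$-algebra homomorphism $\fixpower{A}{n}{G}\to R$, so the only thing left to verify is that it extends $\Ferrand[R]{A}$. For that I would restrict to $\fixpower{A}{n}{\permgroup{n}}\subseteq\fixpower{A}{n}{G}$: there $\fixpower{f}{n}{G}$ becomes $\fixpower{f}{n}{\permgroup{n}}$, and $\phi$ restricted to $\fixpower{B}{n}{\permgroup{n}}$ is $\Ferrand[R]{B}$ because $(G,\phi)$ is a closure datum for $B$, so the restriction of $\phi\circ\fixpower{f}{n}{G}$ equals $\Ferrand[R]{B}\circ\fixpower{f}{n}{\permgroup{n}}$. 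By the equivalence of conditions (1) and (2) of the previous lemma, applied to the universally norm-preserving $f$, this composite is exactly $\Ferrand[R]{A}$, which is what is needed.

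Next I would handle the descent. Writing $\psi=\phi\circ\fixpower{f}{n}{G}$, the closure algebra $\gclose{A}{n}{\psi}$ is the pushout of $A^{\otimes n}\leftarrow\fixpower{A}{n}{G}\xrightarrow{\psi}R$ in commutative $R$-algebras, and similarly $\gclose{B}{n}{\phi}$ for $B$. The square relating the inclusions $\fixpower{A}{n}{G}\into A^{\otimes n}$ and $\fixpower{B}{n}{G}\into B^{\otimes n}$ via $\fixpower{f}{n}{G}$ and $f^{\otimes n}$ commutes (the former map is the restriction of the latter), and the square relating $\psi$ and $\phi$ via $\fixpower{f}{n}{G}$ and $\id_R$ commutes by the definition of $\psi$; together with $\id_R$ these form a morphism of pushout diagrams, which yields the desired $R$-algebra homomorphism $\gclose{A}{n}{\psi}\to\gclose{B}{n}{\phi}$ induced by $f^{\otimes n}$. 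Equivalently, I would argue via the ideal presentation: $f^{\otimes n}$ carries each generator $\gamma-\psi(\gamma)$, with $\gamma\in\fixpower{A}{n}{G}$, to $\fixpower{f}{n}{G}(\gamma)-\phi(\fixpower{f}{n}{G}(\gamma))$, which lies in the defining ideal of $\gclose{B}{n}{\phi}$, so $f^{\otimes n}$ passes to the quotient.

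I do not expect a genuine obstacle here—the argument is essentially formal. The one point needing care is to keep the two notions of invariants $(-)^G$ and $(-)^{\permgroup{n}}$ straight, so that one checks the new datum extends $\Ferrand[R]{A}$ rather than something built out of $\Ferrand[R]{B}$; this is precisely where universal norm-preservation of $f$, via the previous lemma, enters.
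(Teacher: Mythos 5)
Your proposal is correct and follows essentially the same route as the paper's proof: restrict $\phi\circ\fixpower{f}{n}{G}$ to $\fixpower{A}{n}{\permgroup{n}}$, identify it as $\Ferrand[R]{B}\circ\fixpower{f}{n}{\permgroup{n}}$, and invoke the characterization of universally norm-preserving homomorphisms (condition (2) of the preceding lemma) to conclude it equals $\Ferrand[R]{A}$. You spell out the descent to closure algebras in more detail (via pushouts or the ideal presentation) where the paper simply calls it elementary, but the argument is the same.
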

\begin{proof}
 We just need to check that $\phi\circ\fixpower{f}{n}{G}$, restricted to $\fixpower{A}{n}{\permgroup{n}}$, is the Ferrand homomorphism $\Ferrand{A}$.
 This restriction is $\phi|_{\fixpower{B}{n}{\permgroup{n}}}\circ\fixpower{f}{n}{\permgroup{n}} = \Ferrand{B}\circ\fixpower{f}{n}{\permgroup{n}}$, which equals $\Ferrand{A}$ since $f$ is universally norm-preserving.
 That the described homomorphism of $G$-closures exists is elementary.
\end{proof}

%
%
%

\subsection{Base extension of closure data}


Finally, base change preserves closure data and commutes with forming the closure algebra:

\begin{proposition}\label{g-closure-base-change}
 Let $R$ be a ring and $A$ an $R$-algebra of rank $n$.
 Let $R'$ be any $R$-algebra, and $A'=R'\otimes_R A$ the resulting $R'$-algebra of rank $n$.
 If $(G,\phi)$ is a closure datum for $A$ over $R$, then $(G,\phi')$ is a closure datum for $A'$ over $R'$, where $\phi'$ is the composite homomorphism
 \[\begin{tikzcd}[column sep=large]\phi'\colon \fixpower[R']{A'}{n}{G}\cong R'\otimes_R\fixpower{A}{n}{G}\arrow{r}{\id_{R'}\otimes \phi} & R'\otimes_R R\cong R'.\end{tikzcd}\]
 Furthermore, the canonical isomorphism $R'\otimes_R A^{\otimes n}\cong A'^{\otimes_{R'} n}$ descends to an isomorphism $R'\otimes_R (\gclose{A}{n}{\phi})\cong \gclose[R']{A'}{n}{\phi'}$.
\end{proposition}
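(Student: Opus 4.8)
The plan is to derive both assertions from a single principle: base change commutes with the two constructions at issue, namely forming $G$-invariants of a tensor power and forming the pushout of algebras that defines a closure. The one technical input I would set up (or cite) first is that for a rank-$n$ $R$-algebra $A$ and \emph{any} subgroup $G\subseteq\permgroup{n}$, the canonical $R'$-algebra homomorphism $R'\otimes_R\fixpower{A}{n}{G}\to\fixpower[R']{A'}{n}{G}$ is an isomorphism. Zariski-locally on $\spec R$, where $A$ is free of rank $n$, the $G$-module $A^{\otimes n}$ is the permutation module on the $G$-set of functions $\set{n}\to\set{n}$, so $\fixpower{A}{n}{G}$ is free on the orbit set and its formation visibly commutes with base change; gluing over a trivializing cover and using compatibility with $R'\otimes_R A^{\otimes n}\cong A'^{\otimes_{R'}n}$ yields the claim in general. (For $G=\permgroup{n}$ this is Property 2 of the Ferrand homomorphisms recalled above.)

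Granting this, the first assertion is a diagram chase. Under the identifications $\fixpower[R']{A'}{n}{\permgroup{n}}\cong R'\otimes_R\fixpower{A}{n}{\permgroup{n}}$ and $\fixpower[R']{A'}{n}{G}\cong R'\otimes_R\fixpower{A}{n}{G}$, naturality in the group variable makes the inclusion $\fixpower[R']{A'}{n}{\permgroup{n}}\hookrightarrow\fixpower[R']{A'}{n}{G}$ correspond to $\id_{R'}$ tensored with the inclusion $\fixpower{A}{n}{\permgroup{n}}\hookrightarrow\fixpower{A}{n}{G}$, while $\phi'$ is by definition $(R'\otimes_R R\xrightarrow{\sim} R')\circ(\id_{R'}\otimes\phi)$. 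Composing and using that $\phi$ extends $\Ferrand[R]{A}$, the restriction of $\phi'$ to $\fixpower[R']{A'}{n}{\permgroup{n}}$ becomes $(R'\otimes_R R\xrightarrow{\sim} R')\circ(\id_{R'}\otimes\Ferrand[R]{A})$, which is exactly $\Ferrand[R']{A'}$ by the base-change compatibility of the Ferrand homomorphisms. Hence $(G,\phi')$ is a closure datum for $A'$ over $R'$.

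For the second assertion I would note that $\gclose{A}{n}{\phi}$ is, by definition, the pushout in commutative $R$-algebras of $A^{\otimes n}\hookleftarrow\fixpower{A}{n}{G}\xrightarrow{\phi}R$, and that $R'\otimes_R(-)$, being left adjoint to restriction of scalars, preserves pushouts. Applying it and inserting the canonical isomorphisms $R'\otimes_R A^{\otimes n}\cong A'^{\otimes_{R'}n}$, $R'\otimes_R\fixpower{A}{n}{G}\cong\fixpower[R']{A'}{n}{G}$, and $R'\otimes_R R\cong R'$ — which by naturality carry the two structure maps to the inclusion $\fixpower[R']{A'}{n}{G}\hookrightarrow A'^{\otimes_{R'}n}$ and to $\phi'$ — identifies $R'\otimes_R\gclose{A}{n}{\phi}$ with the pushout $\gclose[R']{A'}{n}{\phi'}$, and the isomorphism so produced is the one descending from $R'\otimes_R A^{\otimes n}\cong A'^{\otimes_{R'}n}$. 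Alternatively, working with the explicit presentation, $R'\otimes_R\bigl(A^{\otimes n}/(\alpha-\phi(\alpha):\alpha)\bigr)$ is $A'^{\otimes_{R'}n}$ modulo the ideal generated by the images of the $\alpha-\phi(\alpha)$; since $\fixpower[R']{A'}{n}{G}$ is generated as an $R'$-algebra by the image of $\fixpower{A}{n}{G}$, that ideal equals $\bigl(\beta-\phi'(\beta):\beta\in\fixpower[R']{A'}{n}{G}\bigr)$, so the quotient is $\gclose[R']{A'}{n}{\phi'}$.

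The only step requiring genuine care is the isomorphism $R'\otimes_R\fixpower{A}{n}{G}\cong\fixpower[R']{A'}{n}{G}$: invariants under a finite group do not commute with arbitrary base change in general, so one really must use that $A^{\otimes n}$ is a permutation module locally on $\spec R$. Everything downstream is formal — a diagram chase for the closure-datum claim, and preservation of pushouts under base change of algebras for the closure-algebra claim — so I expect no further obstacles.
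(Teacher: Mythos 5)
Your proof is correct and follows essentially the same route as the paper's: establish that $R'\otimes_R\fixpower{A}{n}{G}\cong\fixpower[R']{A'}{n}{G}$ (the paper cites \cite[Prop.\ 3.6]{Bie15} for this, where you sketch the local permutation-module argument directly), then use base-change compatibility of $\Ferrand[R]{A}$ to check that $\phi'$ restricts to $\Ferrand[R']{A'}$, and finally commute $R'\otimes_R(-)$ past the tensor-product presentation of the closure algebra. The paper writes the last step out as a chain of tensor-product isomorphisms rather than invoking preservation of pushouts abstractly, but the content is identical.
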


\begin{proof}
 First, note that because $A$ is locally free as an $R$-module, the isomorphism $A'^{\otimes_{R'}n}\cong R'\otimes_R (A^{\otimes n})$ does indeed restrict to an isomorphism $\fixpower[R']{A}{n}{G}\cong R'\otimes_R \fixpower{A}{n}{G}$ by \cite[Prop.\ 3.6]{Bie15}, so the definition of $\phi'$ makes sense.
 Then to check that $(G,\phi')$ is a closure datum for $A'$ over $R'$ is to check that $\phi'$ restricts to $\Ferrand[R']{A'}$.
 But this holds because $\Ferrand[R']{A'}\cong \id_{R'}\otimes\,\Ferrand[R]{A}$, and $\phi$ restricts to $\Ferrand[R]{A}$.
 
 The claim that $R'\otimes_R (\gclose{A}{n}{\phi})\cong \gclose[R']{A'}{n}{\phi'}$ is easily checked using the presentation of the $G$-closure as a tensor product:
 \[R'\otimes_R\left(A^{\otimes n}\midotimes_{\mathclap{\fixpower{A}{n}{G}}} R\right)\cong (R'\otimes_R A^{\otimes n})\midotimes_{\mathclap{R'\otimes_R \fixpower{A}{n}{G}}}\,(R'\otimes_R R) \cong A'^{\otimes_{R'} n} \midotimes_{\mathclap{\fixpower[R']{A'}{n}{G}}} R';\]
 the tensor product on the left is $R'\otimes_R(\gclose{A}{n}{\phi})$, and the one on the right is $\gclose[R']{A'}{n}{\phi'}$.
\end{proof}

Note that in the special case $G=\permgroup{n}$, this provides a much simpler proof of \cite[Theorem 1]{Bha14}.


\begin{remark}\label{universal-closures}
 Let $R$ be a ring, let $A$ an $R$-algebra of rank $n$, let $G$ be a subgroup of $\permgroup{n}$. Let $R'$ be an arbitrary $R$-algebra and $A'= R'\otimes_R A$ the resulting $R'$-algebra of rank $n$, and consider the set of $G$-closure data for $A'$ over $R'$ as $R'$ varies.
 We have the following bijections, natural in $R'$, making this functor representable:
 \begin{gather*}
 \{G\text{-closure data for }A'\text{ over }R'\}\\
 \begin{aligned}
  &\longleftrightarrow\{\fixpower[R']{A'}{n}{\permgroup{n}}\text{-algebra homomorphisms } \fixpower[R']{A'}{n}{G}\to R'\} \\
  &\longleftrightarrow\{R'\text{-algebra homomorphisms }\fixpower[R']{A'}{n}{G}\otimes_{\fixpower[R']{A'}{n}{\permgroup{n}}}R'\to R'\}\\
  &\longleftrightarrow\{R'\text{-algebra homomorphisms }R'\otimes_R(\fixpower{A}{n}{G}\otimes_{\fixpower{A}{n}{\permgroup{n}}}R)\to R'\}\\
  &\longleftrightarrow\{R\text{-algebra homomorphisms }\fixpower{A}{n}{G}\otimes_{\fixpower{A}{n}{\permgroup{n}}}R\to R'\}.
 \end{aligned}
 \end{gather*}
 In particular, setting $R'=\fixpower{A}{n}{G}\otimes_{\fixpower{A}{n}{\permgroup{n}}}R$, we find that $A'$ has a canonical $G$-closure datum corresponding to the identity map on $R'$.
 This $G$-closure datum is universal: every $G$-closure datum for every base extension of $A$ can be obtained via base extension from this $G$-closure datum for $A'$ over $R'$.
 
 The special case of $G=\altgroup{n}$ is particularly nice: the resulting $R$-algebra $\discalg{A/R}=\fixpower{A}{n}{\altgroup{n}}\otimes_{\fixpower{A}{n}{\permgroup{n}}}R$ is the discriminant algebra for $A$ over $R$ defined in \cite{Bie15}, which we can now interpret as the universal $R$-algebra such that base-changing to it gives $A$ an $\altgroup{n}$-closure.
 We explore $\altgroup{n}$-closure data further in \cref{an-closures}.
 
 Letting $R'$ vary again, we find through similar reasoning that if $H\subset G$ are subgroups of $\permgroup{n}$ and $\phi$ is a $G$-closure datum for $A$ over $R$, then $R$-algebra homomorphisms $\fixpower{A}{n}{H}\otimes_{\fixpower{A}{n}{G}}R\to R'$ correspond to the set of $H$-closure data on $A'$ over $R'$ inducing the closure datum $(G,\phi')$ of \cref{g-closure-base-change}.
 
 In particular, we can now give an interpretation to the closure algebra associated with a given closure datum $(G,\phi)$ for $A$ over $R$: the closure algebra
 \[\gclose{A}{n}{\phi} = A^{\otimes n}\otimes_{\fixpower{A}{n}{G}}R\]
 is the universal $R$-algebra for which base changing to it gives $A$ a $1$-closure datum inducing the given $G$-closure datum.
 If we think of a $G$-closure datum as partial factorization information for each characteristic polynomial of elements of $A$---a more precise version of this idea is found in \cref{factorization-data}---then the $G$-closure algebra is the universal algebra over which every characteristic polynomial splits completely in a way respecting this partial information.
\end{remark}


\section{\'Etale algebras}
\label{section_etale}


In this section, we will consider closure data for \emph{finite \'etale} algebras, namely locally free algebras for which the trace form $(a,a')\mapsto \trace(aa')$ is non-degenerate.
Examples include the \emph{trivial} \'etale algebras, of the form $R\to R^X:= \prod_{x\in X} R$ for some finite set $X$, as well as finite separable field extensions.
First, we recall a lemma characterizing finite \'etale algebras as those which are \'etale-locally trivial:

\begin{lemma}[{\cite[Lemma 15]{Bha14}}]\label{fin-etale}
 Let $R$ be a ring and $A$ an $R$-algebra finitely generated as an $R$-module.
 Then $A$ is \'etale of rank $n$ if and only if there is an \'etale cover $R\to S$ such that $S\otimes_R A\cong S^n$ as $S$-algebras.
\end{lemma}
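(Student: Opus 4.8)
The plan is to prove the two directions separately: the ``if'' direction by faithfully flat descent, and the ``only if'' direction by induction on the rank $n$, splitting off one root of $A$ at each stage.

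For the ``if'' direction, suppose $R\to S$ is an \'etale cover with $S\otimes_R A\cong S^n$ as $S$-algebras. The trivial algebra $S^n$ is \'etale of rank $n$ over $S$: it is free of rank $n$, and in the basis of coordinate idempotents its trace form is the identity pairing, hence nondegenerate. I would then descend both defining properties along the faithfully flat map $R\to S$: flatness, finite presentation, and constancy of the fibre rank all descend, so $A$ is locally free of rank $n$; and since the trace pairing $A\to\Hom_R(A,R)$ base-changes to the trace pairing of $S^n$ over $S$ (an isomorphism), and isomorphisms of finite projective modules descend, the trace form of $A$ over $R$ is nondegenerate. Hence $A$ is \'etale of rank $n$.

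For the ``only if'' direction, let $A$ be \'etale of rank $n$ over $R$ and induct on $n$. For $n=0$ we have $A=0$, and for $n=1$ the unital structure map $R\to A$ is a surjection of locally free rank-one modules, hence an isomorphism; in both cases $S=R$ works. For $n\geq 2$, the structure map $R\to A$ is finite locally free of positive rank, hence faithfully flat, and \'etale because the trace form of $A$ over $R$ is nondegenerate --- so $R\to A$ is itself an \'etale cover. Base-changing $A$ along $R\to A$ gives the $A$-algebra $A\otimes_R A$ (via the left tensor factor), \'etale of rank $n$ over $A$, and the multiplication map $\mu\colon A\otimes_R A\to A$ is a surjective $A$-algebra homomorphism. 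It therefore realizes $\spec A$ as a closed subscheme of $\spec(A\otimes_R A)$, and because $A$ is unramified over $R$ this diagonal is also open, so its image is cut out by an idempotent. Thus $A\otimes_R A\cong A\times B$ as $A$-algebras, where the first factor is $A$ (identified via $\mu$) and $B$ is a ring-theoretic direct factor of the \'etale $A$-algebra $A\otimes_R A$, hence \'etale over $A$, of rank $n-1$. By the inductive hypothesis there is an \'etale cover $A\to T$ with $T\otimes_A B\cong T^{n-1}$, and then
\[
 T\otimes_R A\;\cong\; T\otimes_A(A\otimes_R A)\;\cong\; T\otimes_A(A\times B)\;\cong\; T\times T^{n-1}\;=\;T^n
\]
as $T$-algebras, using the left $A$-module structure on $A\otimes_R A$ for the first isomorphism. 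Since $R\to A$ and $A\to T$ are \'etale covers and these compose, $S:=T$ does the job.

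The step I expect to be the main obstacle is the claim that the kernel of $\mu$ is generated by an idempotent, i.e.\ that the diagonal $\spec A\into\spec(A\otimes_R A)$ is open as well as closed: closedness follows at once from surjectivity of $\mu$ (or finiteness of $A$ over $R$), but openness is exactly unramifiedness of $A$ over $R$, which for a finite locally free algebra is equivalent to nondegeneracy of the trace form (invertibility of the discriminant). Granting this standard equivalence from the theory of \'etale algebras, along with the compatibility of trace forms with base change, the remaining verifications --- descent in the first direction, and the rank count for $B$ in the second --- are routine.
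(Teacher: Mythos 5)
The paper does not prove this lemma itself; it simply cites it to Bhargava and Satriano as \cite[Lemma 15]{Bha14}, so there is no in-paper argument to compare against. Your proof is correct and is the standard one: faithfully flat descent of flatness, finite presentation, rank, and nondegeneracy of the trace pairing for the ``if'' direction, and for the ``only if'' direction an induction on $n$ that peels off a copy of $A$ from $A\otimes_R A$ via the diagonal idempotent. The one genuinely nontrivial input you invoke as a black box is the equivalence, for a finite locally free $R$-algebra, between nondegeneracy of the trace form (the paper's working definition of \'etale) and unramifiedness, i.e.\ openness of the closed immersion $\spec A \into \spec(A\otimes_R A)$; that is exactly the right bridge, and without it the idempotent need not exist, so it is worth flagging as a citation rather than a remark. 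Two small points you should tighten: in the $n=1$ base case, surjectivity of the structure map $R\to A$ is not immediate from $A$ having rank one as a module --- it follows because $1$ is locally a generator (if $A = Re$ with $1 = ce$, then $e = e\cdot 1 = ce^2 = cde$ forces $cd=1$, so $c$ is a unit); and in the ``if'' direction you should say explicitly that \emph{finite presentation}, not merely finite generation, descends along the faithfully flat map $R\to S$, since that is what upgrades flat and finitely generated to finite projective. With those filled in, the argument is complete.
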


(An $R$-algebra $S$ forms an \'etale cover of $R$ if $S$ is \'etale over $R$ and $\spec(S)\to\spec(R)$ is surjective, but we will not need this definition in order to apply \cref{fin-etale}.)

Thus is it helpful to first consider the Ferrand homomorphism and closure data for trivial algebras:

\begin{lemma}
 \label{ferrand-trivial}
 If $R$ is a ring and $X$ is an $n$-element set, then the Ferrand homomorphism of the trivial rank-$n$ $R$-algebra $R^X$
 \[\Ferrand[R]{R^X}\colon\fixpower{(R^X)}{n}{\permgroup{n}}\cong R^{X^n/\permgroup{n}}\to R\] is projection onto the factor indexed by the orbit of bijections $\Bij(\set{n},X)$.
\end{lemma}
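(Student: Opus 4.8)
The plan is to make every identification explicit and then appeal to the uniqueness of the Ferrand homomorphism (\cref{ferrand-uniqueness}). First I would fix the $R$-algebra isomorphism $(R^X)^{\otimes n}\xrightarrow{\sim} R^{X^n}$ sending a pure tensor $b_1\otimes\dots\otimes b_n$ to the function $(x_1,\dots,x_n)\mapsto \prod_i b_i(x_i)$, where I write elements of $R^X$ as functions $X\to R$. This isomorphism is equivariant for the $\permgroup{n}$-actions, $\permgroup{n}$ permuting tensor factors on the left and permuting the coordinates of $X^n$ on the right; taking invariants therefore gives $\fixpower{(R^X)}{n}{\permgroup{n}}\cong (R^{X^n})^{\permgroup{n}}\cong R^{X^n/\permgroup{n}}$, since a function on $X^n$ is $\permgroup{n}$-invariant exactly when it is constant on orbits. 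Since $\permgroup{n}$ acts transitively on $\Bij(\set{n},X)$ by precomposition with permutations of $\set{n}$, this set is a single $\permgroup{n}$-orbit, so ``projection onto the factor indexed by the orbit of bijections'' names a well-defined $R$-algebra homomorphism $\pi\colon R^{X^n/\permgroup{n}}\to R$. By \cref{ferrand-uniqueness} it then suffices to check that $\pi(e_k(a)) = s_k(a)$ for every $a\in R^X$ and every $k\in\set{n}$.

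Next I would compute both sides. Writing $a\in R^X$ as the tuple $(a(x))_{x\in X}$, multiplication by $a$ is diagonal in the basis of orthogonal idempotents of $R^X$, so $\charpoly{a}(\lambda) = \prod_{x\in X}(\lambda - a(x))$, and hence $s_k(a)$ is the $k$-th elementary symmetric polynomial in the $n$ values $\{a(x):x\in X\}$. On the other side, under the identification above the element $\conjugate{a}{i}\in (R^X)^{\otimes n}$ becomes the function $(x_1,\dots,x_n)\mapsto a(x_i)$; since the ring operations of $R^{X^n}$ are computed pointwise, $e_k(a) = e_k(\conjugate{a}{1},\dots,\conjugate{a}{n})$ is the function $(x_1,\dots,x_n)\mapsto e_k(a(x_1),\dots,a(x_n))$. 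Evaluating this at the point $(\beta(1),\dots,\beta(n))$ for an arbitrary bijection $\beta\colon\set{n}\to X$ — a representative of the orbit of bijections — yields $e_k(a(\beta(1)),\dots,a(\beta(n))) = e_k(\{a(x):x\in X\})$, which is exactly $s_k(a)$. Thus $\pi$ sends $e_k(a)$ to $s_k(a)$ for all $a$ and $k$, so $\pi = \Ferrand[R]{R^X}$.

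The only genuinely delicate point is the bookkeeping: pinning down the $\permgroup{n}$-equivariant isomorphism $(R^X)^{\otimes n}\cong R^{X^n}$ with the correct conventions, and verifying that the bijections really do form a single $\permgroup{n}$-orbit so that the target factor of $\pi$ is unambiguous. Once those are in place, the identity $\pi(e_k(a))=s_k(a)$ is a direct unwinding of definitions, and \cref{ferrand-uniqueness} (equivalently, \cref{ek-to-sk} read in reverse) finishes the argument. One could instead avoid the uniqueness statement by checking directly that $\pi$ is multiplicative, sends $a^{\otimes n}\mapsto\prod_{x\in X}a(x)=\norm[R]{R^X}(a)$, and is compatible with base change, so that it agrees with the norm polynomial law; but invoking \cref{ferrand-uniqueness} is cleaner.
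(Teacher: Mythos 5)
Your proof is correct, and it takes a genuinely different route from the paper's. The paper simply cites Ferrand's original example and then sketches an alternative derivation via a \emph{forward} reference to \cref{1-closure-data} (pulling back the canonical $1$-closure datum on $R^n$ along an arbitrary bijection $X\to\set{n}$ and then restricting to $\permgroup{n}$-invariants); that alternative is logically circular in the paper's own ordering unless one is careful, since \cref{1-closure-data} appears later. What you do instead is give a direct, self-contained verification: you make the $\permgroup{n}$-equivariant isomorphism $(R^X)^{\otimes n}\cong R^{X^n}$ explicit, identify the invariant subring with $R^{X^n/\permgroup{n}}$, observe that $\Bij(\set{n},X)$ is one orbit, and then check the defining property $\pi(e_k(a))=s_k(a)$ by evaluating the pointwise elementary symmetric polynomial at a bijection and matching it to the coefficients of the diagonal characteristic polynomial $\prod_{x\in X}(\lambda-a(x))$, finishing with \cref{ferrand-uniqueness}. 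This buys you a proof that depends only on material already established earlier in the paper (\cref{fixpower-generators} and \cref{ferrand-uniqueness}) and requires no external citation, at the cost of some bookkeeping with the $\permgroup{n}$-equivariant identification. Your closing remark that one could alternatively verify multiplicativity, the norm on diagonal tensors, and base-change compatibility is essentially what Ferrand's cited example does; you are right that invoking the uniqueness characterization is cleaner here.
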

\begin{proof}
 See \cite[Ex. 3.1.3(b)]{Fer98}.
 We could alternatively deduce this result from \cref{1-closure-data}, by choosing an arbitrary bijection $\pi\colon X\to\set{n}$ and pulling back the canonical $1$-closure datum on $R^n$ to $R^X$ and then restricting it to $\fixpower{(R^X)}{n}{\permgroup{n}}$.
\end{proof}


To understand closure data for finite \'etale algebras, then, it will be helpful to understand closure data for trivial \'etale algebras.

\begin{lemma}
\label{gclose-trivial}
 Let $R$ be a ring, $X$ a finite set of cardinality $n$, and $G$ a subgroup of $\permgroup{n}$.
 Then $G$-closure data for $R^X$ over $R$ correspond bijectively to $R$-algebra homomorphisms $R^I\to R$, where $I$ is the set of $G$-orbits of $\Bij(\set{n},X)$ under the action of $G$ by precomposition.
 Furthermore, every $G$-closure of $R^X$ is isomorphic to $R^{\card{G}}$ as an $R$-algebra.
\end{lemma}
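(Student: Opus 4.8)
The plan is to describe the two $R$-algebras $\fixpower{(R^X)}{n}{G}$ and its closure algebras explicitly using the decomposition $X^n = \coprod_{\mathcal{O}} \mathcal{O}$ of $X^n$ into $\permgroup{n}$-orbits (equivalently, $G$-orbits together with their $\permgroup{n}$-translates). First I would recall that $(R^X)^{\otimes n}\cong R^{X^n}$, with $\permgroup{n}$ acting on $R^{X^n}$ via its permutation action on $X^n$; hence $\fixpower{(R^X)}{n}{G}\cong (R^{X^n})^G \cong R^{X^n/G}$, a product of copies of $R$ indexed by $G$-orbits of $X^n$. An $R$-algebra homomorphism out of a finite product $R^J$ to $R$ is the same as a choice of an element $j\in J$ together with (automatically) the projection onto that factor, since $R$ has no nontrivial idempotents only if… more carefully, $R$-algebra homomorphisms $R^J\to R$ correspond to decompositions of $1$ into a complete system of orthogonal idempotents mapping to $(0,\dots,1,\dots,0)$, i.e.\ to \emph{partitions} — but here I should be careful and instead just directly match data: a $G$-closure datum is an $R$-algebra homomorphism $\phi\colon R^{X^n/G}\to R$ extending $\Ferrand[R]{R^X}$, which by \cref{ferrand-trivial} is the projection onto the orbit containing $\Bij(\set{n},X)$.

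The key observation is that the $\permgroup{n}$-orbit of $\Bij(\set{n},X)$ in $X^n$ decomposes, as a $G$-set, into the $G$-orbits of $\Bij(\set{n},X)$, and these are exactly the factors indexed by the set $I$ in the statement; all other $G$-orbits of $X^n$ lie in $\permgroup{n}$-orbits on which $\Ferrand[R]{R^X}$ already vanishes. Concretely, writing $R^{X^n/G}\cong R^I\times R^{J}$ where $J$ indexes the remaining $G$-orbits, an $R$-algebra homomorphism $\phi$ extending $\Ferrand[R]{R^X}$ must kill the $R^J$ factor (since $\Ferrand[R]{R^X}$ does, and these idempotents map to $0$) and must restrict on $R^I$ to an $R$-algebra homomorphism $R^I\to R$ whose composite with the inclusion of the ``full'' idempotent $\sum_{i\in I} e_i$ — which is the image under $\fixpower{(\cdot)}{n}{\permgroup{n}}\into\fixpower{(\cdot)}{n}{G}$ of the idempotent cut out by the $\Bij$-orbit — is the identity. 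Since that full idempotent is $\sum_{i\in I} e_i$, the condition is exactly that $\phi(\sum_{i\in I} e_i)=1$, which is automatic for \emph{any} $R$-algebra homomorphism $R^I\to R$. So I get the claimed bijection: $G$-closure data for $R^X$ over $R$ $\leftrightarrow$ $R$-algebra homomorphisms $R^I\to R$.

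For the closure algebra, given a $G$-closure datum $\phi$ corresponding to $\psi\colon R^I\to R$, I would compute $\gclose{(R^X)}{n}{\phi} = (R^X)^{\otimes n}\otimes_{\fixpower{(R^X)}{n}{G}} R \cong R^{X^n}\otimes_{R^{X^n/G}} R$. Tensoring a product of copies of $R$ down along a map that picks out a factor localizes at the corresponding idempotent: $R^{X^n}\otimes_{R^{X^n/G}} R$ is the product of copies of $R$ indexed by those points of $X^n$ lying in the $G$-orbit selected by $\psi$. Since $\psi$ selects a single $G$-orbit of $\Bij(\set{n},X)$, and every $G$-orbit of bijections has cardinality $\card{G}$ (the $G$-action on $\Bij(\set{n},X)$ by precomposition is free), that orbit has exactly $\card{G}$ elements. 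Hence $\gclose{(R^X)}{n}{\phi}\cong R^{\card{G}}$ as an $R$-algebra, uniformly in $\phi$.

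The main obstacle I anticipate is bookkeeping rather than anything deep: pinning down precisely which idempotents of $\fixpower{(R^X)}{n}{G}\cong R^{X^n/G}$ are hit by the Ferrand homomorphism and verifying that the extension condition on $\phi$ reduces to the vacuous condition $\psi(\sum_{i\in I}e_i)=1$, i.e.\ that the inclusion $R^{X^n/\permgroup{n}}\into R^{X^n/G}$ sends the $\Bij$-orbit idempotent to exactly $\sum_{i\in I} e_i$. This is where I must be most careful, and I would phrase it cleanly by noting that the partition of $X^n/G$ refining $X^n/\permgroup{n}$ is compatible with the two Ferrand descriptions via \cref{ferrand-trivial}. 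Everything else — freeness of the $G$-action on bijections, behavior of finite products under the relevant tensor products — is standard.
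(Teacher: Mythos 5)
Your argument is essentially the paper's: identify $(R^X)^{\otimes n}$ with $R^{\Map(\set{n},X)}=R^{X^n}$, take $G$-invariants to get $R^{X^n/G}$, use \cref{ferrand-trivial} to locate the Ferrand homomorphism as projection onto the orbit of bijections, observe that the extension condition forces $\phi$ to annihilate the idempotents of the non-bijection $G$-orbits (by orthogonality of the images of the $G$-orbit idempotents inside a vanishing $\permgroup{n}$-orbit idempotent), and hence that $G$-closure data are exactly homomorphisms $R^I\to R$. That part is correct and matches the paper.

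The gap is in the closure-algebra computation. You write that $\psi\colon R^I\to R$ ``picks out a factor'' and ``selects a single $G$-orbit'' — but you correctly observed earlier in your own writeup that an $R$-algebra homomorphism $R^I\to R$ is a complete orthogonal system of idempotents $\{\eps_O\}_{O\in I}$ in $R$, not generally a projection unless $R$ is connected. So the clean statement ``$R^{X^n}\otimes_{R^{X^n/G}}R\cong R^O$'' only holds when $\psi$ is a projection. In general one gets $\prod_{f\in\Bij(\set{n},X)}R\eps_{[f]}\cong\prod_{O\in I}(R\eps_O)^{\card{G}}$, and one must still combine the idempotents across $I$: since $\prod_{O\in I}R\eps_O\cong R$, this regroups to $R^{\card{G}}$. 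The paper handles the same point by decomposing $\spec R$ into the $\card I$ clopen pieces on which $\psi$ is a projection, proving the isomorphism on each, and gluing. Either fix closes the gap; as written, your argument only treats the connected case.
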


\begin{proof}
 Recall from \cref{universal-closures} that $G$-closure data for $R^X$ over $R$ correspond to $R$-algebra homomorphisms
 \[\fixpower{(R^X)}{n}{G}\ \midotimes_{\mathclap{\fixpower{(R^X)}{n}{\permgroup{n}}}}\ R \to R.\]
 We can write $(R^X)^{\otimes n}$ as $R^{\Map(\set{n},X)}$, with $G$ acting on the set of basis idempotents $\{\basis_f\,|\, f\colon\set{n}\to X\}$ via $\sigma\cdot \basis_f = \basis_{f\circ\sigma^{-1}}$.
 The $G$-invariants, then, have an $R$-basis of idempotents $\basis_O = \sum_{f\in O}\basis_f$ for each $G$-orbit $O$ of $\Map(\set{n},X)$.
 
 By \cref{ferrand-trivial}, the Ferrand homomorphism $R^{\Map(\set{n},X)/\permgroup{n}}\to R$ is projection onto the factor indexed by $\Bij(\set{n},X)$.
 Hence every $\basis_O$ in $R^{\Map(\set{n},X)/G}$ with $O\not\subset\Bij(\set{n},X)$ is sent to zero in the tensor product.
 So $G$-closure data for $R^X$ over $R$ are parametrized by homomorphisms to $R$ from \[R^{\Map(\set{n},X)/G}/\bigl(\basis_O:O\not\subseteq\Bij(\set{n},X)\bigr) = R^{\Bij(\set{n},X)/G}= R^I.\]
 
 Now we show that each $G$-closure algebra of $R^X$ is isomorphic to $R^{\card{G}}$.
 Choose a homomorphism $R^I\to R$; this partitions $\spec(R)$ into $\card{I}$ disjoint affine open subsets on which the map is a projection.
 Then working locally, assume we have the $G$-closure datum corresponding to the projection $R^I\to R$ indexed by the $O$th factor for some $G$-orbit $O\subset\Bij(\set{n},X)$.
 Then the associated closure algebra is
 \[R^{\Map(\set{n},X)} \ \midotimes_{\mathclap{R^{\Map(\set{n},X)/G}}}\ R\quad \cong R^O,\]
 and since $G$ acts freely on $\Bij(\set{n},X)$, we have $\card{O}=\card{G}$ and $R^O\cong R^{\card{G}}$.
 In the general case, we find that $\spec(R)$ is a disjoint union of open subsets on which the closure algebra is trivial; the closure algebra is hence globally trivial as well.
\end{proof}


\begin{corollary}
 Let $R$ be a ring and $A$ a rank-$n$ \'etale $R$-algebra.
 Then for every closure datum $(G,\phi)$ for $A$ over $R$, the associated closure algebra $\gclose{A}{n}{\phi}$ is a rank-$\card{G}$ \'etale $R$-algebra.
\end{corollary}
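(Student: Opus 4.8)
The plan is to reduce to the trivial étale case by passing to an étale cover, base-changing the closure datum, and then descending the conclusion back to $R$ — and the key point is that the descent step is precisely the forward direction of \cref{fin-etale}, so no extra machinery is needed.

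First I would record that $B := \gclose{A}{n}{\phi}$ is finitely generated as an $R$-module: it is a quotient of $A^{\otimes n}$, which is locally free (of rank $n^n$) hence finitely generated over $R$. This is what will let me invoke \cref{fin-etale} for $B$ at the end.

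Next, since $A$ is étale of rank $n$, \cref{fin-etale} provides an étale cover $R\to S$ with $S\otimes_R A\cong S^n$ as $S$-algebras; write $A'=S\otimes_R A$, a trivial rank-$n$ étale $S$-algebra, say $A'\cong S^{\set{n}}$. By \cref{g-closure-base-change}, $(G,\phi')$ is a closure datum for $A'$ over $S$, and the canonical isomorphism $S\otimes_R A^{\otimes n}\cong A'^{\otimes_S n}$ descends to $S\otimes_R B\cong \gclose[S]{A'}{n}{\phi'}$. Then \cref{gclose-trivial}, applied over $S$ to the trivial algebra $A'$, tells us that every $G$-closure of $A'$ is isomorphic to $S^{\card{G}}$ as an $S$-algebra; hence $S\otimes_R B\cong S^{\card{G}}$.

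Finally, $R\to S$ is an étale cover, $B$ is finitely generated as an $R$-module, and $S\otimes_R B\cong S^{\card{G}}$; so the "if" direction of \cref{fin-etale} gives that $B=\gclose{A}{n}{\phi}$ is étale of rank $\card{G}$ over $R$. I do not expect a genuine obstacle here: the only things to be careful about are that \cref{fin-etale} is stated as a biconditional (so both directions are at hand) and that $B$ is module-finite over $R$ (so that \cref{fin-etale} applies to it); everything else is assembled from the base-change compatibility of \cref{g-closure-base-change} and the explicit computation of \cref{gclose-trivial}.
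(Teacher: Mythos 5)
Your proof is essentially identical to the paper's: reduce via an étale cover $R\to S$ trivializing $A$, base-change the closure datum using \cref{g-closure-base-change}, compute the trivial case via \cref{gclose-trivial}, and then apply \cref{fin-etale} in reverse to descend. The only addition is your explicit verification that $\gclose{A}{n}{\phi}$ is module-finite over $R$ (so that \cref{fin-etale} applies to it), which the paper leaves implicit; this is a correct and worthwhile observation but not a different route.
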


\begin{proof}
 By \cref{fin-etale}, there is an \'etale cover $R\to S$ such that $S\otimes_R A\cong S^n$ as $S$-algebras.
 By \cref{g-closure-base-change}, we obtain a $G$-closure datum $\phi_S$ for $S\otimes_R A$ over $S$, for which the associated closure $\gclose[S]{(S\otimes_R A)}{n}{\phi_S}$ is isomorphic to $S\otimes_R (\gclose{A}{n}{\phi})$.
 But by \cref{gclose-trivial}, the associated $G$-closure of $S\otimes_R A\cong S^n$ is isomorphic to $S^{\card{G}}$.
 Therefore $S\otimes_R (\gclose{A}{n}{\phi})\cong S^{\card{G}}$, so by \cref{fin-etale} again $\gclose{A}{n}{\phi}$ is an \'etale $R$-algebra of rank $\card{G}$.
\end{proof}


In case $R$ is a \emph{connected} ring, that is, $R$ contains exactly two idempotents $0$ and $1$, then for each choice of homomorphism $R\to K$ with $K$ a separably closed field, there is a profinite group $\fundgroup{R}$ called the \emph{\'etale fundamental group} of $R$, and a contravariant equivalence of categories
\[\{\text{finite \'etale $R$-algebras}\}\longleftrightarrow\{\text{finite sets with a continuous $\fundgroup{R}$-action}\}\]
sending an \'etale algebra $A$ to the finite $\fundgroup{R}$-set $\Hom_R(A,K)$; see \cite{Len08} for more details.
In this setting, we have the following interpretation of closure data and closure algebras in terms of the corresponding $\fundgroup{R}$-sets:

\begin{theorem}\label{main-etale}
 Let $R$ be a connected ring with \'etale fundamental group $\fundgroup{R}$.
 Let $A$ be a rank-$n$ \'etale $R$-algebra with corresponding $\fundgroup{R}$-set $X$, and let $G$ be the image of $\fundgroup{R}$ in $\Bij(X,X)$.
 Let $H$ be a subgroup of $\permgroup{n}$.
 Then $H$-closure data for $A$ over $R$ are in one-to-one correspondence with bijections $f:\set{n}\simto X$ such that $f^{-1} G f \subset H$, up to precomposing $f$ by permutations in $H$.
 
 Furthermore, if $B$ is the finite \'etale algebra corresponding to the $\fundgroup{R}$-set $G$, then every $H$-closure of $A$ over $R$ is isomorphic to $B^{\card{H}/\card{G}}$.
\end{theorem}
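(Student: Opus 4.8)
The plan is to transport the whole statement across the contravariant equivalence between finite \'etale $R$-algebras and finite continuous $\fundgroup{R}$-sets, reducing everything to a question about orbits of bijections. I would begin by recording the dictionary. Under the equivalence $A$ corresponds to $X$; since $\Hom_R(-,K)$ turns tensor products into products, $A^{\otimes n}$ corresponds to $\Map(\set{n},X)=X^n$ with the diagonal $\fundgroup{R}$-action, and the action of $\permgroup{n}$ on $A^{\otimes n}$ permuting tensor factors becomes the action of $\permgroup{n}$ on $\Map(\set{n},X)$ by precomposition (which commutes with the $\fundgroup{R}$-action, as in the proof of \cref{gclose-trivial}). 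Taking $H$-invariants is a limit, hence goes to the quotient on the set side: $\fixpower{A}{n}{H}$ corresponds to $\Map(\set{n},X)/H$. This is legitimate because $\fixpower{A}{n}{H}$ is again finite \'etale, which one checks \'etale-locally after base change to a trivializing cover, using \cref{fin-etale} and the fact that the functor $\fixpower{(\cdot)}{n}{H}$ commutes with that flat base change (see the proof of \cref{g-closure-base-change}); over such a cover the statement is exactly the computation in the proof of \cref{gclose-trivial}. Moreover $R$ corresponds to a one-point set, and $R$-algebra homomorphisms out of an \'etale algebra correspond to $\fundgroup{R}$-fixed points of the associated set. Finally, since the Ferrand homomorphism commutes with base change, base-changing $\Ferrand[R]{A}$ to a cover trivializing $A$ and invoking \cref{ferrand-trivial} identifies $\Ferrand[R]{A}$ with the $\fundgroup{R}$-fixed point of $\Map(\set{n},X)/\permgroup{n}$ given by the orbit of bijections $\Bij(\set{n},X)$ (this orbit is $\fundgroup{R}$-stable because postcomposition by elements of $G$ permutes bijections among themselves).

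With this dictionary in hand, \cref{universal-closures} tells us that an $H$-closure datum for $A$ over $R$ is the same as an $R$-algebra homomorphism $\fixpower{A}{n}{H}\to R$ lying over $\Ferrand[R]{A}$; translating, this is a $\fundgroup{R}$-fixed point of $\Map(\set{n},X)/H$ whose image in $\Map(\set{n},X)/\permgroup{n}$ is the bijection orbit --- equivalently, a $\fundgroup{R}$-stable $H$-orbit $O\subseteq\Bij(\set{n},X)$. Writing $O=\{f\circ\sigma:\sigma\in H\}$ for a chosen $f\in O$, the assignment $O\mapsto f$ identifies such orbits with bijections $f\colon\set{n}\to X$ modulo precomposition by $H$. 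And, writing $\rho\colon\fundgroup{R}\to\Bij(X,X)$ for the action homomorphism (with image $G$), the orbit $O$ is $\fundgroup{R}$-stable if and only if postcomposition by every $\rho(\gamma)$ keeps $f$ inside $O$, i.e.\ $f^{-1}\rho(\gamma)f\in H$, which --- as $\rho$ surjects onto $G$ --- is exactly the condition $f^{-1}Gf\subseteq H$. Since this condition is visibly preserved under $f\mapsto f\circ\sigma$ for $\sigma\in H$, the correspondence is well-defined in both directions, giving the first assertion.

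For the second assertion, note that $\gclose{A}{n}{\phi}=A^{\otimes n}\otimes_{\fixpower{A}{n}{H}}R$ is finite \'etale (by the preceding corollary), and that a tensor product of \'etale algebras corresponds to the fibre product of the associated sets; hence $\gclose{A}{n}{\phi}$ corresponds to the fibre of $\Map(\set{n},X)\to\Map(\set{n},X)/H$ over $O$, that is, to $O$ itself as a $\fundgroup{R}$-set. Fixing $f\in O$ and identifying $O$ with $H$ via $\sigma\mapsto f\circ\sigma$, the $\fundgroup{R}$-action becomes translation through the subgroup $f^{-1}Gf$ of $H$, and conjugation by $f$ furnishes a $\fundgroup{R}$-equivariant isomorphism of $f^{-1}Gf$ (with that action) onto $G$ (with its translation action through $\rho$). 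As a $\fundgroup{R}$-set, $H$ with translation through $f^{-1}Gf$ splits into $[H:f^{-1}Gf]=\card{H}/\card{G}$ copies of this transitive $\fundgroup{R}$-set, so $O$ corresponds to a disjoint union of $\card{H}/\card{G}$ copies of the $\fundgroup{R}$-set $G$; translating back gives $\gclose{A}{n}{\phi}\cong B^{\card{H}/\card{G}}$.

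The main obstacle is not a deep difficulty but the bookkeeping in the dictionary: one must verify carefully that $\fixpower{A}{n}{H}$ really does correspond to the quotient $\Map(\set{n},X)/H$ (which needs that this invariant subalgebra is \'etale and that the equivalence sends this particular limit to the evident quotient) and that $\Ferrand[R]{A}$ corresponds to the bijection orbit, and one must keep the left/right and pre/post-composition conventions consistent throughout, so that the conjugation condition emerges exactly as $f^{-1}Gf\subseteq H$ and the closure algebra comes out as $B^{\card{H}/\card{G}}$ rather than some transposed variant. Both of these translation facts reduce, via \'etale descent, to the trivial-algebra case, so \cref{ferrand-trivial} and \cref{gclose-trivial} carry essentially all of the weight.
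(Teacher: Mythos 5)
Your proposal is correct and follows essentially the same route as the paper: both pass through the contravariant equivalence with finite $\fundgroup{R}$-sets, use \cref{universal-closures} to recast $H$-closure data as $\fundgroup{R}$-fixed points of $X^n/H$ over the bijection orbit, translate the stability condition into $f^{-1}Gf\subset H$, and identify the closure algebra with the orbit $O\cong\coprod_{|H|/|G|}G$. The only difference is that you spell out the dictionary (why $\fixpower{A}{n}{H}\leftrightarrow X^n/H$, why $\Ferrand[R]{A}$ corresponds to the bijection orbit, and the explicit $H$-coset decomposition of $O$) in more detail than the paper, which simply cites the trivial case and compares cardinalities.
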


\begin{proof}
 Recall that $H$-closure data for $A$ over $R$ correspond to homomorphisms
 \[\fixpower{A}{n}{H}\midotimes_{\fixpower{A}{n}{\permgroup{n}}} R \to R,\]
 and thus to $\fundgroup{R}$-equivariant maps
 \[\{\ast\} \to X^n/H \times_{X^n/\permgroup{n}} \{\ast\},\]
 that is, $\fundgroup{R}$-invariant elements of $X^n/H$ whose images in $X^n/\permgroup{n}$ are the class of bijections $\Bij(\set{n},X)$.
 These in turn correspond to the $\fundgroup{R}$-invariant (i.e.\ $G$-invariant, since the action is via $\fundgroup{R}\onto G$) $H$-orbits of $\Bij(\set{n},X)$.
 Write such an $H$-orbit as $fH$ for some bijection $f:\set{n}\simto X$; then the condition that $fH$ be $G$-invariant is the equality $GfH=fH$, or the containment $Gf\subset fH$.
 Thus we may say that $fH$ is a $G$-invariant $H$-orbit if and only if $f^{-1}Gf\subset H$.
 Therefore $H$-closure data correspond to bijections $f$ (up to precomposition by elements of $H$) such that $f^{-1}Gf\subset H$, as desired.
 
 Now given such a $G$-invariant $H$-orbit $O$ of $\Bij(\set{n},X)$, giving a $\fundgroup{R}$-equivariant function $\{\ast\}\to X^n/H$,  we find that the $\fundgroup{R}$-set corresponding to the associated $H$-closure algebra is 
 \[X^n \times_{X^n/H} \{\ast\} = O.\]
 Now the action of $\fundgroup{R}$ on $\Bij(\set{n},X)$ is via $G$, and the $G$-action on $\Bij(\set{n},X)$ is free, so as a $\fundgroup{R}$-set it is isomorphic to a disjoint union of copies of $G$.
 Then so is the $G$-invariant subset $O$, and by comparing cardinalities we find that $O \cong \coprod_{|H|/|G|} G$ as $\fundgroup{R}$-sets.
 Therefore the $H$-closure algebra corresponding to $O$ is isomorphic to $B^{|H|/|G|}$, as claimed.
\end{proof}

\begin{remark}
Note that if $H\subset K$ are subgroups of $\permgroup{n}$, then induction of $H$-closure data to $K$-closure data sends the $H$-orbit of a bijection $f\colon\set{n}\simto X$ to the larger $K$-orbit of $f$.
Then for each bijection $f$, there is a smallest subgroup $G_f\subset\permgroup{n}$ for which $f$ gives a $G_f$-closure datum, namely $G_f=f^{-1} G f$, and this $G_f$-closure datum is therefore minimal.
Two bijections $f,f':\set{n}\simto X$ give the same minimal closure datum if and only if $G_f = G_{f'}$ and $fG_f = f'G_{f'}$, which implies that $Gf = Gf'$, so $f$ and $f'$ are related via postcomposition by an element of $G$.
Thus the minimal closure data for $A$ over $R$ are in bijection with the $G$-orbits of $\Bij(\set{n},X)$.
Since the action of $\permgroup{n}$ on these is transitive, all the minimal closure data are isomorphic, as we claimed in \cref{section_relationships}.
Note also that the closure algebras associated to the minimal closure data are all isomorphic to $B$, the \'etale algebra corresponding to the finite $\fundgroup{R}$-set $G$, as in the usual Galois theory of projective separable ring extensions.
\end{remark}
 
%
%
%


\section{Product algebras}
\label{section_products}

In this section, we consider the closure data that arise on product algebras $A_1\times\dots\times A_k$ given closure data on each $A_i$.
Our first main theorem is as follows:

\begin{theorem}\label{main-products}
 Let $R$ be a ring, and let $A_i$ be an $R$-algebra of rank $n_i$ for each $i\in\set{k}$, each with a closure datum $(G_i, \phi_i)$.
 Set
 \begin{itemize}
  \item $n := \sum_{i=1}^k n_i$.
  \item $A := \prod_{i=1}^k A_i$, an $R$-algebra of rank $n$.
  \item $G := \prod_{i=1}^k G_i$, considered as a subgroup of $\permgroup{n}$ via the action of each $G_i$ on the $n_i$-element set $\{n_1+\dots+n_{i-1}+1,\dots,\,n_1+\dots + n_{i-1}+n_i\}$.
  \item $\phi\colon \fixpower{A}{n}{G}\to R$ equal to the composite
  \end{itemize}
  \[\begin{tikzcd}
  \ds\fixpower{A}{n}{G}\cong\bigotimes_{i=1}^k \fixpower{A}{n_i}{G_i}\arrow[two heads]{r} & \ds\bigotimes_{i=1}^k \fixpower{A_i}{n_i}{G_i}\arrow{rr}{\bigotimes_{i=1}^k \phi_i} & & R
  \end{tikzcd}.\]
 Then $(G,\phi)$ is a closure datum for $A$ over $R$.
 
 Furthermore, the $R$-algebra homomorphism $A^{\otimes n}\cong \bigotimes_{i=1}^k A^{\otimes n_i} \to \bigotimes_{i=1}^k A_i^{\otimes n_i}$ descends to an isomorphism $\gclose{A}{n}{\phi}\cong \bigotimes_{i=1}^k (\gclose{A_i}{n_i}{\phi_i})$.
\end{theorem}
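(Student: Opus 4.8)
The plan is to reduce everything to the block structure already built into the definitions of $G$ and $\phi$. Partition $\set{n}$ into consecutive blocks $B_1,\dots,B_k$ with $\card{B_i}=n_i$, on which the factors $G_i$ of $G=\prod_i G_i$ act, and group the tensor factors of $A^{\otimes n}$ accordingly: this gives $A^{\otimes n}\cong\bigotimes_{i=1}^k A^{\otimes n_i}$ with $G$ acting via $G_i$ on the $i$-th tensor slot. Since $A$ is locally free, hence flat, over $R$, taking $G_i$-invariants in the $i$-th slot commutes with the remaining (flat) tensor factors — invariants are a kernel, and tensoring with a flat module preserves kernels — so this restricts to the isomorphism $\fixpower{A}{n}{G}\cong\bigotimes_i\fixpower{A}{n_i}{G_i}$ used in the statement. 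Alongside this I will use the ``colour'' decomposition $A^{\otimes n_i}=(\prod_j A_j)^{\otimes n_i}\cong\prod_{c\colon\set{n_i}\to\set{k}}\bigotimes_l A_{c(l)}$, in which $G_i$ permutes the colourings $c$ and fixes the constant colouring $c\equiv i$, whose summand is $A_i^{\otimes n_i}$. As the corresponding idempotent is $G_i$-invariant, this yields compatible ring decompositions $A^{\otimes n_i}\cong A_i^{\otimes n_i}\times C_i$ and $\fixpower{A}{n_i}{G_i}\cong\fixpower{A_i}{n_i}{G_i}\times D_i$, under which the surjection $\fixpower{A}{n_i}{G_i}\onto\fixpower{A_i}{n_i}{G_i}$ appearing in $\phi$ is the projection onto the first factor. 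With these identifications, every map occurring in the definition of $\phi$ and in the map $A^{\otimes n}\to\bigotimes_i A_i^{\otimes n_i}$ is either a structural projection or one of the $\phi_i$.

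To show $(G,\phi)$ is a closure datum I must check that $\phi|_{\fixpower{A}{n}{\permgroup{n}}}=\Ferrand[R]{A}$, and by \cref{ferrand-uniqueness} it suffices to verify that $\phi$ sends $e_k(a)$ to $s_k(a)$ for every $a=(a_1,\dots,a_k)\in A$ and every $k\in\set{n}$. Under the block isomorphism, $(\lambda-a)^{\otimes n}=\prod_{j=1}^n(\lambda-\conjugate{a}{j})$ corresponds to the $R[\lambda]$-tensor $(\lambda-a)^{\otimes n_1}\otimes\cdots\otimes(\lambda-a)^{\otimes n_k}$; expanding by $R[\lambda]$-multilinearity and comparing $\lambda^{n-k}$-coefficients shows that $e_k(a)$ corresponds to $\sum_{m_1+\dots+m_k=k}e_{m_1}(a)\otimes\cdots\otimes e_{m_k}(a)$, where $e_{m_i}(a)$ denotes the $m_i$-th elementary symmetric function of the conjugates of $a$ in the $i$-th slot $A^{\otimes n_i}$. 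The projection to $\bigotimes_i A_i^{\otimes n_i}$ carries each $e_{m_i}(a)$ to $e_{m_i}(a_i)$ (the ring map $\prod_j A_j\to A_i$ sends $a\mapsto a_i$), and then $\bigotimes_i\phi_i$ sends $\bigotimes_i e_{m_i}(a_i)$ to $\prod_i\phi_i(e_{m_i}(a_i))=\prod_i s_{m_i}(a_i)$, since each $\phi_i$ extends $\Ferrand[R]{A_i}$, which sends $e_{m_i}$ to $s_{m_i}$ by \cref{ek-to-sk}. Hence $\phi(e_k(a))=\sum_{m_1+\dots+m_k=k}\prod_i s_{m_i}(a_i)$; since multiplication by $a$ on $A=\bigoplus_i A_i$ is block-diagonal we have $\charpoly{a}(\lambda)=\prod_i\charpoly{a_i}(\lambda)$, and the right-hand side above is precisely its $(-1)^k\lambda^{n-k}$-coefficient, namely $s_k(a)$.

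For the closure algebra, I start from $\gclose{A}{n}{\phi}=A^{\otimes n}\otimes_{\fixpower{A}{n}{G}}R\cong(\bigotimes_i A^{\otimes n_i})\otimes_{\bigotimes_i\fixpower{A}{n_i}{G_i}}R$ via the block isomorphism. The structure map $\bigotimes_i\fixpower{A}{n_i}{G_i}\to R$ equals $\bigotimes_i\phi_i$ precomposed with the tensor of the projections $\fixpower{A}{n_i}{G_i}\onto\fixpower{A_i}{n_i}{G_i}$, so it factors through $B:=\bigotimes_i\fixpower{A_i}{n_i}{G_i}$; inserting $B$ into the tensor product (transitivity of base change) and using that tensor products commute, $\bigotimes_i(M_i\otimes_{C_i}N_i)\cong(\bigotimes_i M_i)\otimes_{\bigotimes_i C_i}(\bigotimes_i N_i)$, I obtain $\gclose{A}{n}{\phi}\cong\bigl(\bigotimes_i(A^{\otimes n_i}\otimes_{\fixpower{A}{n_i}{G_i}}\fixpower{A_i}{n_i}{G_i})\bigr)\otimes_B R$. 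Each factor $A^{\otimes n_i}\otimes_{\fixpower{A}{n_i}{G_i}}\fixpower{A_i}{n_i}{G_i}$ is the base change of $A_i^{\otimes n_i}\times C_i$ along the projection $\fixpower{A_i}{n_i}{G_i}\times D_i\onto\fixpower{A_i}{n_i}{G_i}$, which kills $C_i$ and returns $A_i^{\otimes n_i}$. Therefore $\gclose{A}{n}{\phi}\cong(\bigotimes_i A_i^{\otimes n_i})\otimes_B R\cong\bigotimes_i\bigl(A_i^{\otimes n_i}\otimes_{\fixpower{A_i}{n_i}{G_i}}R\bigr)=\bigotimes_i\gclose{A_i}{n_i}{\phi_i}$, and unwinding the identifications shows this isomorphism is induced by the stated map $A^{\otimes n}\cong\bigotimes_i A^{\otimes n_i}\to\bigotimes_i A_i^{\otimes n_i}$.

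I expect the main obstacle to be not any single deep point but the careful verification that the isomorphism $\fixpower{A}{n}{G}\cong\bigotimes_i\fixpower{A}{n_i}{G_i}$ genuinely intertwines all the relevant structure: the inclusion into $A^{\otimes n}$, the colour-decomposition idempotents, the surjections onto $\fixpower{A_i}{n_i}{G_i}$, and the identification of $\permgroup{n}$-invariants with a subring of $\bigotimes_i\fixpower{A}{n_i}{G_i}$ used in the coefficient computation — together with checking that the repeated rearrangements of tensor products are carried out with compatible module structures. The one place local freeness of $A$ is genuinely needed is in passing $G_i$-invariants past the remaining tensor factors.
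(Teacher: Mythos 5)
Your argument is correct and follows essentially the same route as the paper's proof: verify $\phi(e_k(a))=s_k(a)$ via the factorization $\charpoly{a}(\lambda)=\prod_i\charpoly{a_i}(\lambda)$, then split the closure algebra into a tensor product of blocks and use the $G_i$-invariant idempotent $\basis_i^{\otimes n_i}$ to pass from $A^{\otimes n_i}$ to $A_i^{\otimes n_i}$. The only cosmetic difference is presentational: where you package the idempotent step as a ring decomposition $A^{\otimes n_i}\cong A_i^{\otimes n_i}\times C_i$ compatible with $\fixpower{A}{n_i}{G_i}\cong\fixpower{A_i}{n_i}{G_i}\times D_i$, the paper works directly with elements, noting that any $\conjugate{a}{j}$ with $a_i=0$ becomes zero after multiplying by the image of $\basis_i^{\otimes n_i}$ (which $\phi$ sends to $1$); both then finish with the same base-change-transitivity observation that the two maps out of $\fixpower{A}{n_i}{G_i}$ factor through $\fixpower{A_i}{n_i}{G_i}$.
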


\begin{proof}
 First we show that $\phi$ restricts to the Ferrand homomorphism $\fixpower{A}{n}{\permgroup{n}}\to R$.
 Let $a = (a_1,\dots,a_k)\in A$, and consider the image of $(x-a)^{\otimes n}$ under $\phi\otimes\id_{R[x]}$.
 We find 
 \[(x-a)^{\otimes n} = \bigotimes_{i=1}^k (x-a)^{\otimes n_i} \longmapsto \bigotimes_{i=1}^k (x-a_i)^{\otimes n_i} \longmapsto \prod_{i=1}^k \charpoly{a_i}(x) = \charpoly{a}(x),\]
 so looking at each coefficient of $x^{n-k}$, we have that $\phi$ sends $e_k(a)$ to $s_k(a)$ as desired.
 (That the characteristic polynomial $\charpoly{a}(x)$ of $a\in A$ factors as the product of each characteristic polynomial $\charpoly{a_i}(x)$ of $a_i\in A_i$ is easy to check locally when each $A_i$ has an $R$-basis: then $a=(a_1,\dots,a_k)$ acts block diagonally.)
 
 Next we show that the closure algebra associated to $(G,\phi)$ is the tensor product of all the $\gclose{A_i}{n_i}{\phi_i}$.
 Note that 
 \[
  \fixpower{A}{G}{n} = (A^{\otimes n_1}\otimes \dots \otimes A^{\otimes n_k})^{G_1\times\dots\times G_k} \cong \fixpower{A}{n_1}{G_1}\otimes\dots\otimes \fixpower{A}{n_k}{G_k},
 \] 
 since the natural map is easily checked to be an isomorphism whenever $A$ is a free $R$-module.
 So we obtain
 \[
  A^{\otimes n} \midotimes_{\fixpower{A}{n}{G}} R \ \cong\ \Bigl(\bigotimes_{i=1}^k A^{\otimes n_i}\Bigr) \midotimes_{\left(\bigotimes_{i=1}^k \fixpower{A}{n_i}{G_i}\right)} R \ \cong\  
  \bigotimes_{i=1}^k\left(A^{\otimes n_i}\ \midotimes_{\mathclap{\fixpower{A}{n_i}{G_i}}}\ R\right).
 \]
 Then all we must show is that $A^{\otimes n_i}\otimes_{\fixpower{A}{n_i}{G_i}}R$ is isomorphic to $\gclose{A_i}{n_i}{\phi_i}= A_i^{\otimes n_i}\otimes_{\fixpower{A_i}{n_i}{G_i}}R$.
 Indeed, if we let $\basis_i\in A$ be the element $(0,\dots,0,1,0,\dots,0)$ with a $1$ in the $i$th place, then $\basis_i\otimes \dots \otimes \basis_i\in \fixpower{A}{n_i}{G_i}$ is sent to $1$ in $R$. 
 Hence for each element $a\in I:= \{(a_1,\dots,a_n)\in A:a_i=0\}$, the image of $\conjugate{a}{j}$ in $A^{\otimes n_i}\otimes_{\fixpower{A}{n_i}{G_i}}R$ is equal to that of $\conjugate{a}{j}\cdot (\basis_i\otimes\dots\otimes\basis_i) = 0$.
 Therefore 
 \[
  A^{\otimes n_i}\ \midotimes_{\mathclap{\fixpower{A}{n_i}{G_i}}}\ R \ \cong\  (A/I)^{\otimes n_i}\ \midotimes_{\mathclap{\fixpower{A}{n_i}{G_i}}}\ R \ \cong\  A_i^{\otimes n_i}\ \midotimes_{\mathclap{\fixpower{A}{n_i}{G_i}}}\ R.
 \]
 Last, observe that the two maps defining the tensor product
 \[
 \fixpower{A}{n_i}{G_i}\to A^{\otimes n_i}\to A_i^{\otimes n_i}\text{ and }
 \fixpower{A}{n_i}{G_i}\to \fixpower{A_i}{n_i}{G_i}\to R\]
 both factor through $\fixpower{A_i}{n_i}{G_i}$; we may therefore substitute it in the base of the tensor product:
 \[
 A_i^{\otimes n_i}\ \midotimes_{\mathclap{\fixpower{A}{n_i}{G_i}}}\ R \ \cong\  A_i^{\otimes n_i}\ \midotimes_{\mathclap{\fixpower{A_i}{n_i}{G_i}}}\ R \ =\  \gclose{A_i}{n_i}{\phi_i}.
 \]
 Thus $\gclose{A}{n}{\phi} = \bigotimes_{i=1}^k(\gclose{A_i}{n_i}{\phi_i})$ as desired.
\end{proof}


 Note that \cref{main-products} implies that a universally norm-preserving homomorphism from a rank-$n$ algebra $A$ to a product of lower rank algebras $\prod_{i=1}^k B_i$ (with each $B_i$ of rank $n_i$) produces a $\prod_{i=1}^k\permgroup{n_i}$-closure datum for $A$.
 We might ask whether all $\prod_{i=1}^k\permgroup{n_i}$-closure data arise in this way.
 \Cref{factorization-data} shows that the answer is yes if $A$ is monogenic.
 We can also see elementarily that $1 = \permgroup{1}\times\dots\times\permgroup{1}$-closures arise in this way via the following proposition:
 

\begin{proposition}\label{1-closure-data}
 Let $R$ be a ring and $A$ an $R$-algebra of rank $n$.
 Let $f_1,\dots,f_n$ be $R$-algebra homomorphisms from $A$ to $R$; we can compile these into two single $R$-algebra homomorphisms 
 \begin{align*}
  \bigotimes_{i=1}^n f_i&\colon A^{\otimes n}\to R:\conjugate{a}{i}\mapsto f_i(a)\text{ for each }i\in\set{n}\\
  \prod_{i=1}^n f_i&\colon A\to R^n:a\mapsto (f_1(a),\dots,f_n(a))
 \end{align*}
 Then $\bigotimes_i f_i$ is a $1$-closure datum for $A$ over $R$ if and only if $\prod_i f_i$ is a universally norm-preserving homomorphism from $A$ to $R^n$.
 \end{proposition}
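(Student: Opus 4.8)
The plan is to show that each side of the asserted equivalence is equivalent to the single family of equalities
\[
\Bigl(\,\bigotimes_{i=1}^n f_i\Bigr)\bigl(e_k(a)\bigr)=s_k(a)\qquad\text{for all }a\in A,\ k\in\set{n}.
\]

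First I would unwind the left-hand side. Since the trivial subgroup acts trivially on $A^{\otimes n}$, its algebra of invariants is all of $A^{\otimes n}$, so $\bigotimes_{i=1}^n f_i$ being a $1$-closure datum means precisely that its restriction to $\fixpower{A}{n}{\permgroup{n}}$ is the Ferrand homomorphism $\Ferrand[R]{A}$. Every $e_k(a)$ lies in $\fixpower{A}{n}{\permgroup{n}}$, and by \cref{ek-to-sk} the Ferrand homomorphism sends $e_k(a)$ to $s_k(a)$; conversely, by \cref{ferrand-uniqueness} any $R$-algebra homomorphism $\fixpower{A}{n}{\permgroup{n}}\to R$ with that property equals the Ferrand homomorphism. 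Hence $\bigotimes_{i=1}^n f_i$ is a $1$-closure datum if and only if the displayed equalities hold. (Uniqueness of the Ferrand homomorphism makes this immediate; one could alternatively note that the $e_k(a)$ generate $\fixpower{A}{n}{\permgroup{n}}$ by \cref{fixpower-generators}.)

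Next I would compute the left side of these equalities. Applying the ring homomorphism $\bigotimes_{i=1}^n f_i$ coefficientwise to the identity $(x-a)^{\otimes n}=\prod_{i=1}^n\bigl(x-\conjugate{a}{i}\bigr)$ in $A^{\otimes n}[x]$, and using $(\bigotimes_i f_i)(\conjugate{a}{i})=f_i(a)$, the image is $\prod_{i=1}^n\bigl(x-f_i(a)\bigr)$; comparing the coefficient of $(-1)^k x^{n-k}$ on both sides identifies $(\bigotimes_i f_i)(e_k(a))$ with the $k$-th elementary symmetric polynomial in $f_1(a),\dots,f_n(a)$. On the other hand, the element $(\prod_{i=1}^n f_i)(a)=(f_1(a),\dots,f_n(a))$ acts on $R^n$ as the diagonal matrix with these entries, so its characteristic polynomial in the algebra $R^n$ is $\prod_{i=1}^n\bigl(x-f_i(a)\bigr)$, whence $s_k\bigl((\prod_i f_i)(a)\bigr)$ is that same elementary symmetric polynomial. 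Therefore the displayed equalities are equivalent to $s_k\bigl((\prod_i f_i)(a)\bigr)=s_k(a)$ for all $a$ and $k$.

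Finally, I would invoke the characterization of universally norm-preserving homomorphisms recalled above---specifically its condition~(3), that $f$ preserves characteristic polynomials---applied to $\prod_{i=1}^n f_i\colon A\to R^n$: this map is universally norm-preserving if and only if $s_k\bigl((\prod_i f_i)(a)\bigr)=s_k(a)$ for all $a\in A$ and $k\in\set{n}$. Chaining this with the two equivalences above yields the proposition. I do not expect a genuine obstacle here; the only steps that need (routine) care are the coefficientwise application of $\bigotimes_i f_i$ to a polynomial identity and the identification of the characteristic-polynomial coefficients $s_k$ of the split algebra $R^n$ with elementary symmetric polynomials.
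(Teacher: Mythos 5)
Your argument is correct and follows essentially the same route as the paper's proof: reduce both sides to the condition that the $k$-th elementary symmetric polynomial in $f_1(a),\dots,f_n(a)$ equals $s_k(a)$, compute $s_k$ on $R^n$ explicitly, and invoke \cref{ferrand-uniqueness} together with characterization~(3) of universally norm-preserving maps. The only difference is that you spell out a few intermediate steps (such as the coefficientwise application of $\bigotimes_i f_i$ to $(x-a)^{\otimes n}$) that the paper leaves implicit.
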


\begin{proof}
 First, note that the characteristic polynomial for an element $(r_1,\dots,r_n)$ of $R^n$ is $\prod_{i=1}^n (x-r_i)$, so the $s_k$ of this tuple is the $k$th elementary symmetric polynomial in the $r_i$.
 Then $\prod_i f_i$ being universally norm-preserving is equivalent to the $k$th elementary symmetric polynomial in the $f_i(a)$ always equaling $s_k(a)$.
 But $\bigotimes_i f_i$ sends $e_k(a)$ to the $k$th elementary symmetric polynomial in the $f_i(a)$, so $\prod_i f_i$ being universally norm-preserving is equivalent to $\bigotimes_i f_i$ sending each $e_k(a)$ to $s_k(a)$.
 This is in turn equivalent to $\bigotimes_i f_i$ restricting to $\Ferrand{A}$ on $\fixpower{A}{n}{\permgroup{n}}$ by \cref{ferrand-uniqueness}.
 \end{proof}
 
 \begin{remark}
  In particular, the $n$ projections $\pi_1,\dots,\pi_n\colon R^n\to R$ give a canonical $1$-closure datum for $R^n$ over $R$, thereby inducing a canonical $G$-closure datum for each subgroup $G\subset\permgroup{n}$.
 Furthermore, if $\prod_i f_i\colon A\to R^n$ is universally norm-preserving, then $\bigotimes_i f_i$ is the $1$-closure datum for $A$ obtained via $\prod_i f_i$ from the canonical $1$-closure datum for $R^n$.
 Then given any $G$-closure datum $\phi$ for $A$ over $R$, we can interpret the $G$-closure algebra $\gclose{A}{n}{\phi}$ as the universal $R$-algebra $R'$ such that $A':= R'\otimes_R A$ gains a universally norm-preserving homomorphism to $R'^n$ for which the base change of $\phi$ is the pullback of the canonical $G$-closure datum on $R'^n$.
\end{remark}

Our second main theorem shows that if we replace $G$ in \cref{main-products} with a larger group $H$ (while keeping $H\cap\permgroup{n_i}$ equal to $G_i$) then the induced closure algebra is just a power of the $G$-closure algebra, generalizing \cite[Theorem 6]{Bha14} that 
\[\gclose{A}{n}{\Ferrand{A}}\cong \Bigl(\bigotimes_{i=1}^k \gclose{A_i}{n_i}{\Ferrand{A_i}}\Bigr)^{\binom{n}{n_1,n_2,\dots,n_k}}.\]

\begin{theorem}\label{stronger-products}
 In the setting of \cref{main-products}, let $H\subset\permgroup{n}$ be a subgroup such that $H\cap \permgroup{n_i} = G_i$, where we regard $\permgroup{n_i}$ as a subgroup of $\permgroup{n}$ via its action on $\{n_1+\dots+n_{i-1}+1,\dots,\,n_1+\dots + n_{i-1}+n_i\}$.
 Then $H\supset G$, and the induced $H$-closure datum $\phi|_{\fixpower{A}{n}{H}}$ has associated closure algebra
 \begin{align*}
 \gclose{A}{n}{(\phi|_{\fixpower{A}{n}{H}})} &\cong \left(\gclose{A}{n}{\phi}\right)^{(H:G)}\cong \Bigl(\bigotimes_{i=1}^k \gclose{A_i}{n_i}{\phi_i}\Bigr)^{(H:G)}.
 \end{align*}
\end{theorem}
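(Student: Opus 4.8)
First I would record that $G\subseteq H$: each $G_i = H\cap\permgroup{n_i}$ lies in $H$, and since the $G_i$ act on pairwise disjoint blocks they commute, so the subgroup they generate inside $\permgroup{n}$ — which is exactly $G = \prod_i G_i$ — is contained in $H$. Then $(H,\phi|_{\fixpower{A}{n}{H}})$ is a closure datum by induction, and since (by \cref{g-closure-base-change} and \cite[Prop.~3.6]{Bie15}) both sides of the claimed isomorphism commute with arbitrary base change and localize, I would reduce to the case in which each $A_i$, hence $A = \prod_i A_i$, is free over $R$.

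The engine of the argument is the idempotent decomposition $A^{\otimes n} = \bigoplus_{\sigma\colon\set{n}\to\set{k}} M_\sigma$ coming from the orthogonal idempotents $\basis_\sigma := \basis_{\sigma(1)}\otimes\dots\otimes\basis_{\sigma(n)}$, where $\basis_i\in A$ is the unit of $A_i$; here $M_\sigma\cong \bigotimes_i A_i^{\otimes\,\card{\sigma^{-1}(i)}}$, the group $\permgroup{n}$ permutes the summands by $\sigma\mapsto\sigma\circ(\,\cdot\,)^{-1}$, and the block stabilizer $P := \permgroup{n_1}\times\dots\times\permgroup{n_k}$ is precisely the stabilizer of the standard surjection $\sigma_0$ (with $\sigma_0^{-1}(i)$ the $i$th block). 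Unwinding the definition of $\phi$ from \cref{main-products}, the composite $\fixpower{A}{n}{G}\cong\bigotimes_i\fixpower{A}{n_i}{G_i}\onto\bigotimes_i\fixpower{A_i}{n_i}{G_i}$ is the restriction of multiplication by $\basis_{\sigma_0}$, so $\phi(\xi) = (\bigotimes_i\phi_i)(\basis_{\sigma_0}\xi)$; consequently $\phi|_{\fixpower{A}{n}{H}}$ sends the idempotent $\basis_O$ of an $H$-orbit $O$ to $1$ if $O = H\cdot\sigma_0$ and to $0$ otherwise. Hence the quotient map $A^{\otimes n}\onto\gclose{A}{n}{(\phi|_{\fixpower{A}{n}{H}})}$ kills every summand outside $H\cdot\sigma_0$, giving $\gclose{A}{n}{(\phi|_{\fixpower{A}{n}{H}})}\cong\bigl(\bigoplus_{\sigma\in H\cdot\sigma_0}M_\sigma\bigr)\otimes_{(\bigoplus M_\sigma)^H}R$.

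Next, $H$ acts transitively on $H\cdot\sigma_0$ with stabilizer $H\cap P$, so $\bigoplus_{\sigma\in H\cdot\sigma_0}M_\sigma$ is the co-induced algebra $\mathrm{Ind}_{H\cap P}^H M_{\sigma_0}$, which over its ring of $H$-invariants $M_{\sigma_0}^{\,H\cap P}$ is a product of $(H:H\cap P)$ diagonal copies of $M_{\sigma_0}$; tensoring down yields $\gclose{A}{n}{(\phi|_{\fixpower{A}{n}{H}})}\cong\bigl(M_{\sigma_0}\otimes_{M_{\sigma_0}^{\,H\cap P}}R\bigr)^{(H:H\cap P)}$. Since $(H:G) = (H:H\cap P)\cdot(H\cap P:G)$, replacing $H$ by $H\cap P$ reduces the theorem to the case $H\subseteq P$. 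There $M_{\sigma_0} = \bigotimes_i A_i^{\otimes n_i}$; a Goursat-type argument shows $G\trianglelefteq H$ with $H$ normalizing each $G_i$; the homomorphism being tensored down is the restriction of $\bigotimes_i\phi_i$ from $M_{\sigma_0}^{\,G} = \bigotimes_i\fixpower{A_i}{n_i}{G_i}$ to $M_{\sigma_0}^{\,H}$; and $H/G$ acts on $M_{\sigma_0}^{\,G}$ with $(M_{\sigma_0}^{\,G})^{H/G} = M_{\sigma_0}^{\,H}$. Writing $R$ as an $M_{\sigma_0}^{\,H}$-algebra through $M_{\sigma_0}^{\,H}\into M_{\sigma_0}^{\,G}\xrightarrow{\bigotimes\phi_i}R$ and factoring the tensor product through $M_{\sigma_0}^{\,G}$, the theorem comes down to the identification $M_{\sigma_0}^{\,G}\otimes_{M_{\sigma_0}^{\,H}}R\cong R^{(H:G)}$ as an $M_{\sigma_0}^{\,G}$-algebra whose $(H:G)$ coordinate maps are the $H/G$-translates of $\bigotimes_i\phi_i$ — translates lying in $N_{\permgroup{n}}(G)$, so that by \cref{main-products} together with the functoriality recorded in \cref{group-actions} each corresponding factor is $\cong\bigotimes_i\gclose{A_i}{n_i}{\phi_i} = \gclose{A}{n}{\phi}$.

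This last identification is the heart of the matter and the step I expect to be hardest: it is a Galois-type descent for the $H/G$-action on $M_{\sigma_0}^{\,G}$, and it is not formal — a plain action on a ring of truncated-polynomial type need not be Galois over its invariants — so the argument must genuinely use the existence of the $\phi_i$. Concretely, after base change to $\gclose{A}{n}{\phi}$ each $A_i$ acquires a $1$-closure datum, hence a universally norm-preserving map to a trivial algebra, which reduces the claim to the transparent orbit count of \cref{gclose-trivial} (each $H$-orbit of $\Bij(\set{n},X)$ splits into $(H:G)$ $G$-orbits, both groups acting freely there). Making the descent back to $R$ robust — either by establishing the needed faithful flatness in the presence of a closure datum, or by constructing the $(H:G)$ coordinate projections over $R$ directly and invoking the split case only to see that they jointly constitute an isomorphism — is where the real care is required.
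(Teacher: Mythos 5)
Your structural plan — reduce to the free case, decompose $A^{\otimes n}$ into the summands $M_\sigma$ indexed by the surjections $\sigma\colon\set n\to\set k$, observe that $\phi|_{\fixpower{A}{n}{H}}$ kills the $H$-orbit-sum idempotents $\basis_O$ for $O\neq H\cdot\sigma_0$, and factor the index $(H:G)=(H:H\cap P)(H\cap P:G)$ so as to reduce to $H\subseteq P$ — is sound, and the first half tracks the paper's proof closely (the paper works directly with the idempotent $e=\basis_{\sigma_0}$ and its $H$-orbit of size $|H/G|$). But you then stop exactly where the theorem is decided. The final identification $M_{\sigma_0}\otimes_{M_{\sigma_0}^{H}}R\cong \bigl(M_{\sigma_0}\otimes_{M_{\sigma_0}^{G}}R\bigr)^{(H:G)}$ is not established: you call it the "heart of the matter," propose a descent from $\gclose{A}{n}{\phi}$ to $R$, and then explicitly flag that the needed faithful flatness is unproven. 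It isn't true in general that $R\to\gclose{A}{n}{\phi}$ is faithfully flat (e.g.\ the closure algebra can have nilpotents and even fail to be a faithful $R$-module; see the example in \cref{section_monogenic} with $\Z/(9)$). Nor does base changing to $\gclose{A}{n}{\phi}$ make the $A_i$ literally trivial, so \cref{gclose-trivial} does not apply verbatim. As written this is a genuine gap, not a routine verification.

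The paper avoids the descent problem entirely by working at the level of ideals in $A^{\otimes n}$ rather than trying to realize $M_{\sigma_0}^{G}$ as an étale $H/G$-cover of $M_{\sigma_0}^{H}$. Concretely: after localizing so $A$ is free, it compares the ideals
\[
I_H := \bigl(e-1,\ y-\phi(y) : y\in\fixpower{A}{n}{H}\bigr)\quad\text{and}\quad I_G := \bigl(x-\phi(x) : x\in\fixpower{A}{n}{G}\bigr)
\]
and proves $I_H=I_G$ by running over $G$-orbit sums $x$ of monomials in the product basis. If $x\cdot e=x$, then the corresponding $H$-orbit sum $y$ satisfies $y\cdot e=x$ and $\phi(y)=\phi(x)$, so modulo $I_H$ one has $x = y\cdot e = y\cdot 1 = y = \phi(y) = \phi(x)$; if $x\cdot e=0$, then $\phi(x)=\phi(x)\phi(e)=\phi(xe)=0$ and $x = x\cdot e = 0$ modulo $I_H$. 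That two-case argument is the replacement for your descent step, and it uses nothing beyond the multiplicativity of $\phi$ on invariants and the orbit structure of monomials. If you want to keep your framework, the same case analysis inside $M_{\sigma_0}$ (with $H\subseteq P$) would close the gap and is the computation your last paragraph is implicitly reaching for.
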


\begin{proof}
 Again, for $i\in\set{k}$ let $\basis_i\in A=\prod_{i=1}^k A_i$ be the idempotent $(0,\dots,0,1,0,\dots,0)$ with a $1$ in the $i$th place.
 Let $e$ be the idempotent of $A^{\otimes n}$ given by 
 \[e = \basis_1\otimes\dots\otimes \basis_1\otimes \basis_2\otimes\dots\otimes \basis_2 \otimes\dots\otimes\basis_k\otimes\dots\otimes\basis_k,\]
 with $n_i$ tensor factors of each $\basis_i$.
 Then $e$ is $G$-invariant, so the $H$-orbit $\{h.e : h\in H\}$ of $e$ will be in natural bijection with the set $H/G$ of left cosets of $G$ in $H$.
 Let $\tilde e$ be the sum of all the elements of this orbit; then $\tilde e$ is $H$-invariant and sent to $1$ under $\phi|_{\fixpower{A}{n}{H}}$.
 
 Now the $\card{H/G}$ idempotents $\{h.e : h\in H\}$ map to idempotents of the $H$-closure $\gclose{A}{n}{\phi|_{\fixpower{A}{n}{H}}}$ that are permuted transitively by the action of $H$, and moreover these idempotents are orthogonal and have sum $1$.
 Therefore $\gclose{A}{n}{\phi|_{\fixpower{A}{n}{H}}}$ splits as a product of $\card{H/G}$ isomorphic factors.
 We claim that the factor corresponding to the idempotent $e$ is canonically isomorphic to $\gclose{A}{n}{\phi}$.
 Indeed, we are comparing the two quotients
 \[A^{\otimes n}/(e-1, y - \phi(y): y\in \fixpower{A}{n}{H})\text{ and }A^{\otimes n}/(x - \phi(x): x\in \fixpower{A}{n}{G})\]
 The right-hand ideal clearly contains the left-hand ideal; we show conversely that everything in the right-hand ideal is already zero in the left-hand quotient.
 Let $x\in\fixpower{A}{n}{G}$.
 By working locally, we may assume that the $A_i$ are all free, and by expanding $x$, we may assume $x$ is a sum over the $G$-orbit of a pure tensor with each tensor factor a basis element of some $A_i$.
 Let $y$ be the corresponding $H$-orbit sum.
 There are two cases, according as $x\cdot e = x$ (when, in order, the tensor factors of $x$ consist of $n_1$ from $A_1$, $n_2$ from $A_2$, etc.) or $x\cdot e = 0$.
 In the former case, we have $y\cdot e = x$ as well, since $e$ annihilates every term of $y-x$.  
 Then $\phi(x) = \phi(y\cdot e) = \phi(y)\phi(e) = \phi(y)$, so in $\bigl(\gclose{A}{n}{\phi|_{\fixpower{A}{n}{H}}}\bigl)/(e-1)$ we have
 \[\phi(x) = \phi(y) = y = y\cdot 1= y\cdot e = x.\]
 In the case that $x\cdot e = 0$, then $\phi(x) = \phi(x)\cdot 1 = \phi(x)\phi(e) = \phi(x\cdot e) = 0$.  And in $\gclose{A}{n}{\phi|_{\fixpower{A}{n}{H}}}/(e-1)$ we thus have 
 \[\phi(x) = 0 = x\cdot e = x\cdot 1 = x. \]
 Therefore the ideal $(x-\phi(x) : x\in \fixpower{A}{n}{G})$ is equal to $(y-\phi(y) : y\in\fixpower{A}{n}{H}) + (e-1)$ as claimed.
 So $\gclose{A}{n}{\phi|_{\fixpower{A}{n}{H}}}$ is isomorphic to a product of $\card{H/G}$ copies of $\gclose{A}{n}{\phi}$.
\end{proof}


\section{$\altgroup{n}$-closure data}
\label{an-closures}


Recall from \cref{universal-closures} that if $A$ is a rank-$n$ algebra over $R$, then $\altgroup{n}$-closure data for $A$ over $R$ correspond to $R$-algebra homomorphisms to $R$ from the \emph{discriminant algebra}
\[\discalg{A/R}:= \fixpower{A}{n}{\altgroup{n}} \ \midotimes_{\mathclap{\fixpower{A}{n}{\permgroup{n}}}}\ R.\]
The discriminant algebra is always a rank-$2$ algebra over the base ring, and furthermore there is a canonical isomorphism
\[\extpower^n A \simto \extpower^2 \discalg{A/R}\]
sending $a_1\wedge\dots\wedge a_n$ to $1\wedge \dot\gamma(a_1,\dots,a_n)$, where $\dot\gamma(a_1,\dots,a_n)$ is the image in $\discalg{A/R}$ of the $\altgroup{n}$-invariant element $\gamma(a_1,\dots,a_n) = \sum_{\sigma\in\altgroup{n}}a_{\sigma(1)}\otimes\dots\otimes a_{\sigma(n)}$ of $\fixpower{A}{n}{\altgroup{n}}$.
This isomorphism respects the discriminant bilinear forms on $\extpower^n A$ and $\extpower^2\discalg{A/R}$; see \cite[Theorem 4.1]{Bie15} for proofs of all the above statements.


If $A$ is not merely locally free as an $R$-module, but free with $R$-basis $(\theta_1,\dots,\theta_n)$, then $\extpower^n A$ is free with generator $\theta_1\wedge\dots\wedge \theta_n$.
Therefore $\extpower^2\discalg{A}$ is free with generator $1\wedge\dot\gamma(\theta_1,\dots,\theta_n)$, and hence $\discalg{A}$ itself has $R$-basis $(1,\dot\gamma(\theta_1,\dots,\theta_n))$.
We can thus present $\discalg{A/R}$ abstractly as an $R$-algebra $R[y]/(q(y))$, where $q$ is a monic quadratic polynomial, the characteristic polynomial of $\dot\gamma(\theta_1,\dots,\theta_n)$ in $\discalg{A/R}$. 
Futhermore, the discriminant of $q$ is the same as the discriminant of $A$ with respect to its basis $(\theta_1,\dots,\theta_n)$.
So $\altgroup{n}$-closures of $A$ over $R$ correspond to $R$-algebra homomorphisms $R[y]/(q(y))\to R$, i.e.\ roots of $q$ in $R$.


\begin{example}\label{monogenic-A3-example}
 Let $R$ be a ring with elements $a,b\in R$, and let $A$ be the $R$-algebra $R[x]/(x^3 + ax + b)$ with $R$-basis $(1,x,x^2)$.
 Then $\discalg{A/R}$ has $R$-basis $(1, \dot\gamma(1,x,x^2))$.
 Since the sum and product of $\dot\gamma(1,x,x^2)$ and $\dot\gamma(1,x^2,x)$ are both in $R$, we have a monic quadratic polynomial of which $\dot\gamma(1,x,x^2)$ is a root, and which must therefore be its characteristic polynomial:
 \[y^2 - \bigl(\dot\gamma(1,x,x^2) + \dot\gamma(1,x^2,x)\bigr)y + \bigl(\dot\gamma(1,x,x^2) + \dot\gamma(1,x^2,x)\bigr).\]
 We can compute these coefficients with the Ferrand homomorphism $\fixpower{A}{3}{\permgroup{3}}\to R$:
 \begin{align*}
 \dot\gamma(1,x,x^2) + \dot\gamma(1,x^2,x) &= \Ferrand{A/R}(\gamma(1,x,x^2) + \gamma(1,x^2,x)) = 3b\\
 \dot\gamma(1,x,x^2) \dot\gamma(1,x^2,x) &= \Ferrand{A/R}(\gamma(1,x,x^2)\gamma(1,x^2,x)) = a^3 + 9b^2.
 \end{align*}
 (See \cite[Example 5.6]{Bie15} for the full and more general computation.)
 Then $\discalg{A/R}\cong R[y]/(y^2 - (3b)y + (a^3 + 9b^2))$, so $\altgroup{3}$-closure data for $A$ over $R$ correspond to roots of $y^2 - (3b)y+(a^3 + 9b^2)$ in $R$.
 In particular, if an $\altgroup{3}$-closure datum exists then the discriminant $(3b)^2 - 4(a^3 + 9b^2) = -4a^3 - 27b^2$ is a square in $R$.
\end{example}


\begin{example}
 Consider the degree-$2$ separable extension $\F{4}$ over $\F{2}$.
 Since any quadratic algebra is canonically isomorphic to its discriminant algebra (\cite[Proposition 5.1]{Bie15}), there exists an $\altgroup{2}=1$-closure datum if and only if there is a map $\F{4}\to\F{2}$, which there is not, even though the discriminant of $\F{4}$ over $\F{2}$ is $1$, a square.
 This is consistent with the Galois group of $\F{4}$ over $\F{2}$ being $\permgroup{2}$.
 
 On the other hand, the cubic $\F{2}$-algebra $\F{8}\cong \F{2}[x]/(x^3 + x + 1)$ has discriminant algebra $\F{2}[y]/(y^2-(3\cdot 1)y+(1^3 + 9\cdot 1^2)) = \F{2}[y]/(y^2-y)$, which does admit a map to $\F{2}$.
 Therefore $\F{8}$ has an $\altgroup{3}=\cyclicgroup{3}$-closure datum, which is consistent with having Galois group $\cyclicgroup{3}$.
\end{example}


Note that this criterion for $A$ to have an $\altgroup{n}$-closure datum, namely that there is an $R$-algebra homomorphism $\discalg{A/R}\to R$, works equally well in every characteristic.
In this respect, the discriminant algebra is a better quadratic resolvent than testing whether the discriminant is a square, which for field extensions only works in characteristic other than $2$.
However, the square-discriminant test does work in a slightly larger generality: when $2$ is a \emph{primoid non-zerodivisor}.


\begin{definition}
 Let $p$ be an element of a ring $R$.
 We say that $p$ is \emph{primoid} if whenever $p^2$ divides a product $ab$, then $p$ divides $a$ or $b$.
\end{definition}

For example, units and prime elements are primoid.
More generally, every power of a prime non-zerodivisor is primoid.
The utility of this notion is that the quadratic formula works over a ring $R$ if $2\in R$ is a primoid non-zerodivisor:


\begin{lemma}
 Let $R$ be a ring, and let $x\in R$ be a solution to the equation $x^2+bx+c=0$ for fixed $b,c\in R$.
 Then $2x+b$ is a square root of the equation's discriminant $b^2-4c$.
 If $2$ is a primoid non-zerodivisor in $R$, then this assignment $x\mapsto 2x+b$ forms a one-to-one correspondence between the solutions to $x^2+bx+c=0$ and the square roots of the discriminant.
\end{lemma}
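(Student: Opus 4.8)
The plan is to dispatch the square-root assertion by a one-line expansion, and then establish the claimed bijection by checking injectivity and surjectivity separately; the primoid hypothesis will be needed only for surjectivity.

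First I would verify that $x\mapsto 2x+b$ sends solutions to square roots of the discriminant. Expanding and using $x^2+bx+c=0$,
\[(2x+b)^2 = 4x^2+4bx+b^2 = 4(x^2+bx+c)+(b^2-4c) = b^2-4c,\]
so the map is well-defined with no hypothesis on $2$. For injectivity, if $x,x'$ are solutions with $2x+b=2x'+b$, then $2(x-x')=0$, so $x=x'$ because $2$ is a non-zerodivisor; thus injectivity uses only that $2$ is a non-zerodivisor.

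For surjectivity, let $\delta\in R$ satisfy $\delta^2=b^2-4c$. The only conceivable preimage is an $x$ with $2x=\delta-b$, so I must first show $2\mid\delta-b$. From $\delta^2-b^2=-4c$ I obtain $(\delta-b)(\delta+b)=2^2(-c)$, so $2^2$ divides the product $(\delta-b)(\delta+b)$; since $2$ is primoid, $2$ divides one of the two factors. But $\delta+b=(\delta-b)+2b$, so $2\mid\delta-b$ if and only if $2\mid\delta+b$, and therefore $2\mid\delta-b$ in either case. Write $\delta-b=2x$, with $x$ unique since $2$ is a non-zerodivisor; then $2x+b=\delta$ by construction. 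Finally, multiplying by $4$ and substituting $\delta=2x+b$,
\[4x^2=(\delta-b)^2=\delta^2-2b\delta+b^2=(b^2-4c)-2b(2x+b)+b^2=-4c-4bx,\]
so $4(x^2+bx+c)=0$; since $4=2\cdot 2$ is a non-zerodivisor, $x^2+bx+c=0$, and $x$ is the required preimage of $\delta$.

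The only step with genuine content is the surjectivity argument — extracting $2\mid\delta-b$ from $2^2\mid(\delta-b)(\delta+b)$ via the primoid hypothesis, together with the easy-to-overlook observation that $2\mid\delta-b$ and $2\mid\delta+b$ are equivalent. Everything else is routine manipulation with non-zerodivisors and the quadratic identity, so I do not anticipate any further obstacle.
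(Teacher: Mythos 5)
Your proof is correct and follows essentially the same route as the paper's: verify the identity $(2x+b)^2=b^2-4c$ directly, use the primoid property on $(\delta-b)(\delta+b)=-4c$ together with the observation that $\delta-b$ and $\delta+b$ differ by $2b$ to conclude $2\mid\delta-b$, and then cancel a factor of $4$ (a non-zerodivisor since $2$ is) to show the resulting $x$ satisfies the quadratic. The only cosmetic difference is that you separate injectivity and surjectivity explicitly, while the paper folds uniqueness into the phrase ``written uniquely as $2x$''; the content is identical.
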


\begin{proof}
 That $2x+b$ is a square root of the discriminant is straightforward: $(2x+b)^2 = 4x^2 + 4bx + b^2 = 4(-bx-c) + 4bx + b^2 = b^2-4c$.
 Conversely, suppose that $2$ is a primoid non-zerodivisor and that $d$ is a square root of $b^2-4c$.
 We show that $d$ can be uniquely written as $2x+b$ for some solution $x$ to $x^2+bx+c=0$.
 Consider that $(d+b)(d-b) = d^2-b^2 = -4c$, so since $2$ is primoid we must have $2|(d+b)$ or $2|(d-b)$.
 Then we conclude that since the difference between $d+b$ and $d-b$ is a multiple of $2$, both are multiples of $2$.
 In particular, $d-b$ can be written uniquely as $2x$ for some $x$, since $2$ is a non-zerodivisor.
 And for that $x$, we have $4(x^2+bx+c) = (2x)^2 + 2b(2x)+4c = (d-b)^2 + 2b(d-b) + 4c = d^2 - b^2 + 4c = 0$, so $x^2+bx+c=0$ as desired.
\end{proof}


\begin{theorem}\label{sqrt-disc}
 Let $R$ be a ring in which $2$ is a primoid non-zerodivisor, and let $A$ be an $R$-algebra equipped with an $R$-module basis.
 Then $\altgroup{n}$-closure data for $A$ over $R$ correspond to square roots in $R$ of the discriminant of $A$ with respect to that basis.
\end{theorem}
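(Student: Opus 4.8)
The plan is to chain together the identifications already assembled in this section. First, recall from \cref{universal-closures} that $\altgroup{n}$-closure data for $A$ over $R$ are in natural bijection with $R$-algebra homomorphisms $\discalg{A/R}\to R$. Since $A$ is free over $R$ with the prescribed basis $(\theta_1,\dots,\theta_n)$, the discussion following that remark equips $\discalg{A/R}$ with an $R$-basis $(1,\dot\gamma(\theta_1,\dots,\theta_n))$, hence a presentation $\discalg{A/R}\cong R[y]/(q(y))$ in which $q(y)=y^2+by+c$ is the characteristic polynomial of $\dot\gamma(\theta_1,\dots,\theta_n)$, and --- as also observed there --- the discriminant $b^2-4c$ of $q$ equals the discriminant $\disc{A}$ of $A$ taken with respect to $(\theta_1,\dots,\theta_n)$. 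Consequently, $R$-algebra homomorphisms $\discalg{A/R}\to R$ are precisely the roots of $q$ in $R$.

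Second, I would invoke the quadratic-formula lemma proved just above: since $2$ is a primoid non-zerodivisor in $R$, the assignment $x\mapsto 2x+b$ is a one-to-one correspondence between roots of $y^2+by+c$ in $R$ and square roots of the discriminant $b^2-4c$ in $R$. Composing this with the bijection of the previous paragraph --- closure data $\leftrightarrow$ homomorphisms $\discalg{A/R}\to R$ $\leftrightarrow$ roots of $q$ --- yields the desired bijection between $\altgroup{n}$-closure data for $A$ over $R$ and square roots of $\disc{A}$ in $R$.

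I do not anticipate a genuine obstacle: the entire content is carried by earlier results --- the interpretation of $\altgroup{n}$-closure data through $\discalg{A/R}$, the quadratic presentation of $\discalg{A/R}$ in the free case together with the identification of its discriminant with $\disc{A}$ (ultimately from \cite{Bie15}), and the primoid quadratic formula. The one point that warrants care is bookkeeping of sign and unit conventions, so that the discriminant of the monic quadratic $q$ is genuinely $\disc{A}$ and not its negative or a unit multiple; the explicit computation in \cref{monogenic-A3-example} provides a convenient sanity check.
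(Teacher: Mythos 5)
Your proposal is correct and follows essentially the same route as the paper's own (quite terse) proof: identify $\altgroup{n}$-closure data with roots in $R$ of the quadratic polynomial $q$ presenting $\discalg{A/R}$, note that the discriminant of $q$ equals that of $A$ with respect to the given basis, and then apply the primoid quadratic-formula lemma. You simply make explicit the intermediate steps (\cref{universal-closures} and the free presentation of $\discalg{A/R}$) that the paper compresses into ``we know that.''
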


\begin{proof}
 We know that $\altgroup{n}$-closure data for $A$ correspond to roots in $R$ of a quadratic polynomial whose discriminant equals that of $A$ with respect to the given basis.
 But since $2$ is a primoid non-zerodivisor, roots of such a quadratic polynomial correspond to square roots of its discriminant, and thus square roots of the discriminant of $A$.
\end{proof}


\begin{example}
 To see that the primoid hypothesis in \cref{sqrt-disc} is necessary, consider the ring $R=\Z[\sqrt{5}]$ and $A = R[x]/(x^2-x-1)$.
 The discriminant of $A$ over $R$ is $(-1)^2 - 4(1)(-1) = 5$, a square in $R$.
 But $A$ does not have an $\altgroup{2}=1$-closure as it does not admit a homomorphism to $R$; the golden ratio is not a $\Z$-linear combination of $1$ and $\sqrt{5}$.
 This is because $2$ is not primoid in $R$: we have $(1+\sqrt{5})(1-\sqrt{5}) = -4$, a multiple of $2^2$, but neither factor is a multiple of $2$.
\end{example}


\section{Monogenic algebras}
\label{section_monogenic}


\begin{definition}
 Let $R$ be a ring and $A$ an $R$-algebra. 
 We say that $A$ is \emph{monogenic of rank $n$} if there exists an isomorphism $A\cong R[x]/(f(x))$ for some monic degree-$n$ polynomial $f(x)$.
 In particular, a monogenic rank-$n$ $R$-algebra is necessarily \emph{free} of rank $n$ as an $R$-module.
\end{definition}


\begin{remark}
 There is also a weaker notion of ``monogenic,'' meaning just that $A$ is generated by a single element as an $R$-algebra, but if $A$ is a rank-$n$ $R$-algebra then these two notions are equivalent.
 Indeed, suppose $A$ has rank $n$ and is generated as an $R$-algebra by a single element $a$.
 Then we have a surjective $R$-algebra homomorphism $R[x]\onto A$ sending $x\mapsto a$, and since $a$ is a root of its characteristic polynomial $\charpoly{a}(x)$, this map descends to a surjection $R[x]/(\charpoly{a}(x))\onto A$.
 Locally, this is a surjective homomorphism of free rank-$n$ modules, hence must be (locally and globally) an isomorphism.
 So $R[x]/(\charpoly{a}(x))\cong A$.
\end{remark}


\begin{remark}
 Note that given any element $a\in A$, not necessarily a generator, we still obtain a well-defined algebra homomorphism $R[x]/(\charpoly{a}(x))\to A$.
 By \cite[Example 7.2]{Bie15}, this homomorphism is universally norm-preserving.
 Therefore $G$-closure data for $A$ pull back to $G$-closure data for $R[x]/(\charpoly{a}(x))$.
 This may be viewed as a kind of obstruction to the existence of $G$-closure data for $A$ over $R$; there cannot be any unless $R[x]/(\charpoly{a}(x))$ has one too for every $a\in A$.
 For this reason, criteria for monogenic algebras to admit $G$-closure data are useful even if one is interested in algebras that are not necessarily monogenic.
\end{remark}

\subsection{Intransitive closure data}

In this section we consider closure data for a monogenic algebra when the subgroup of $\permgroup{n}$ is of the form $\permgroup{n_1}\times\dots\times \permgroup{n_k}$, with each $\permgroup{n_i}$ acting on $\set{n}$ by permuting the $n_i$ elements
\[\{n_1+\dots+n_{i-1} + 1,\dots,n_1+\dots+n_{i-1} + n_i\}\]
as in \cref{main-products}.
We find that such closure data correspond to factorizations of the defining polynomial of the monogenic algebra.

 
\begin{theorem}\label{factorization-data}
 Let $f(x)$ be a monic degree-$n$ polynomial with coefficients in a ring $R$, and let $n_1, n_2,\dots,n_k$ be natural numbers whose sum is $n$.  
 Then $\permgroup{n_1}\times\dots\times \permgroup{n_k}$-closure data of $A=R[x]/(f(x))$ correspond to factorizations of $f$ into monic factors $f_1(x),\dots, f_k(x)$ of degrees $n_1,\dots,n_k$, resepctively.
 
 Given such a factorization $f(x) = f_1(x)\dots f_k(x)$, set $A_i=R[x]/(f_i(x))$.
 Then the $\permgroup{n_1}\times\dots\times\permgroup{n_k}$-closure algebra associated to this factorization is isomorphic to $\bigotimes_{i=1}^k A_i^{\otimes n_i}/\Ferrand{A_i}$.
\end{theorem}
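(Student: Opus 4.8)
The plan is to prove the two assertions in turn. Throughout write $x$ also for its class in $A=R[x]/(f)$, set $G=\permgroup{n_1}\times\dots\times\permgroup{n_k}$ and $I_i=\{n_1+\dots+n_{i-1}+1,\dots,n_1+\dots+n_i\}$, and for $a\in A$ let $e_k^{(i)}(a)$ be the $k$-th elementary symmetric polynomial in $\{\conjugate{a}{j}:j\in I_i\}$, so that $F_i(t):=\prod_{j\in I_i}(t-\conjugate{x}{j})=\sum_{k=0}^{n_i}(-1)^k e_k^{(i)}(x)\,t^{n_i-k}$ is a polynomial over $\fixpower{A}{n}{G}$. To send a factorization $f=f_1\cdots f_k$ (monic factors of degrees $n_1,\dots,n_k$) to a closure datum, I would put $A_i=R[x]/(f_i)$ and $b=(x,\dots,x)\in B:=\prod_i A_i$. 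Since multiplication by $b$ on $B$ acts block-diagonally, with the companion matrix of $f_i$ on the factor $A_i$, its characteristic polynomial is $\prod_i f_i=f$; thus $\charpoly{b}=f$ and the natural homomorphism $R[x]/(\charpoly{b})\to B$, $x\mapsto b$, is exactly $\psi\colon A=R[x]/(f)\to B$, which is universally norm-preserving by \cite[Example 7.2]{Bie15} (recalled in the remark before this theorem). Then \cref{main-products}, applied to the unique $\permgroup{n_i}$-closure data $\Ferrand{A_i}$, equips $B$ with a $G$-closure datum, and its pullback along $\psi$ is the desired $G$-closure datum $\phi$ for $A$. Conversely, from a $G$-closure datum $\phi$ I would read off $f_i:=\phi(F_i)\in R[t]$, monic of degree $n_i$; since $\prod_i F_i(t)=(t-x)^{\otimes n}$ has $\permgroup{n}$-invariant coefficients, applying $\phi$ and using $\phi|_{\fixpower{A}{n}{\permgroup{n}}}=\Ferrand{A}$, \cref{ek-to-sk}, and $\charpoly{x}=f$, we get $\prod_i f_i=f$.

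The engine behind both the bijectivity and the closure-algebra computation is the elementary identity that, for fixed $g\in R[x]$ and $d\ge 1$, the symmetric polynomial $\sigma_m(g(u_1),\dots,g(u_d))$ is a universal polynomial $Q^g_m$ in $\sigma_1(\vec u),\dots,\sigma_d(\vec u)$; specializing $u_j\mapsto\conjugate{x}{j}$ over $I_i$ gives $e_m^{(i)}(g(x))=Q^g_m\bigl(e_1^{(i)}(x),\dots,e_{n_i}^{(i)}(x)\bigr)$ in $\fixpower{A}{n}{G}$. Together with \cref{fixpower-generators} and the isomorphism $\fixpower{A}{n}{G}\cong\bigotimes_i\fixpower{A}{n_i}{\permgroup{n_i}}$ (valid since $A$ is free, as in the proof of \cref{main-products}), this shows that $\fixpower{A}{n}{G}$ is generated as an $R$-algebra by $\{e_m^{(i)}(x):i\in\set{k},\,m\in\set{n_i}\}$, so a $G$-closure datum $\phi$ is determined by the scalars $c_m^{(i)}:=\phi(e_m^{(i)}(x))$, which are the coefficients of $f_i=\phi(F_i)$. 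To check the two maps are mutually inverse, I would unwind the product closure datum of \cref{main-products} through $\psi$: the datum built from $f_1\cdots f_k$ sends $F_i$ to $\charpoly{x}$ computed in $R[x]/(f_i)$, namely $f_i$ (this handles ``reverse after forward'' at once), and applying the same computation to the datum rebuilt from an arbitrary $\phi$ shows it too sends $F_i$ to $f_i=\phi(F_i)$, hence agrees with $\phi$ on every generator $e_m^{(i)}(x)$ and so equals $\phi$.

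For the closure algebra, the generation statement and the identity above give
\[
\gclose{A}{n}{\phi}=A^{\otimes n}\Big/\bigl(e_m^{(i)}(a)-\phi(e_m^{(i)}(a))\bigr)=A^{\otimes n}\Big/\bigl(e_m^{(i)}(x)-c_m^{(i)}\bigr),
\]
with $a$ ranging over $A$, $i$ over $\set{k}$, $m$ over $\set{n_i}$; the second equality holds because modulo the smaller ideal $e_m^{(i)}(a)=Q^g_m(e_\bullet^{(i)}(x))\equiv Q^g_m(c_\bullet^{(i)})=\phi(e_m^{(i)}(a))$. Presenting $A^{\otimes n}=R[x_1,\dots,x_n]/(f(x_j):j)$ with $x_j$ the $j$-th copy of $x$, the relations $e_m^{(i)}(x)=c_m^{(i)}$ say exactly $\prod_{j\in I_i}(t-x_j)=f_i(t)$; evaluating at $t=x_j$ for $j\in I_i$ shows $f_i(x_j)=0$, so each $f(x_j)=\prod_{i'}f_{i'}(x_j)=0$ is a redundant relation. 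Hence $\gclose{A}{n}{\phi}$ is the quotient of $R[x_1,\dots,x_n]$ by the relations equating the coefficients of $\prod_{j\in I_i}(t-x_j)$ with those of $f_i(t)$, for each $i$; as the variable sets $\{x_j:j\in I_i\}$ are pairwise disjoint, this factors as $\bigotimes_{i=1}^k R[x_j:j\in I_i]/\bigl(\text{coefficients of }\prod_{j\in I_i}(t-x_j)-f_i\bigr)$, and the identical argument in the case $k=1$ identifies the $i$-th factor with $\gclose{A_i}{n_i}{\Ferrand{A_i}}=A_i^{\otimes n_i}/\Ferrand{A_i}$.

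The step I expect to be the main obstacle is verifying ``forward after reverse'': it requires both the universal symmetric-polynomial identity, to cut $\fixpower{A}{n}{G}$ down to the explicit generators $e_m^{(i)}(x)$, and a careful traversal of the product closure datum of \cref{main-products} along the universally norm-preserving map $\psi$ to confirm that $\phi'(F_i)=f_i$. Everything else---well-definedness of the two maps and the closure-algebra identification---is then a routine manipulation of the tensor-product presentations.
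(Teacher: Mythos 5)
Your proposal is correct and follows essentially the same route as the paper's proof: same forward construction via the universally norm-preserving map $A\to\prod_i A_i$ pulled back through \cref{main-products}, same recovery of the $f_i$ as images of $(\lambda-x)^{\otimes n_i}$, and the same key observation that $f_i(\conjugate{x}{j})=0$ forces each $A^{\otimes n_i}$ factor down to $A_i^{\otimes n_i}$. The only stylistic difference is that your closure-algebra identification works with an explicit polynomial presentation $R[x_1,\dots,x_n]/(\dots)$ and the generating set $\{e_m^{(i)}(x)\}$, whereas the paper phrases the same computation as a chain of tensor-product base changes over $\fixpower{A}{n_i}{\permgroup{n_i}}$ and $\fixpower{A_i}{n_i}{\permgroup{n_i}}$.
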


\begin{proof}
 To produce a $\prod_i\permgroup{n_i}$-closure datum from a factorization, consider the $R$-algebra homomorphism $A\to \prod_{i=1}^k A_i: x\mapsto (x,\dots,x)$.
 It is a universally norm-preserving homomorphism because the characteristic polynomial of $(x,\dots,x)$ in $\prod_{i=1}^k A_i$ is $f_1(x)\dots f_k(x)=f(x)$, which is the characteristic polynomial of $x$ in $A$.
 Therefore the $\prod_i\permgroup{n_i}$-closure datum on $\prod_{i=1}^k A_i$ from \cref{main-products} pulls back to a $\prod_i\permgroup{n_i}$-closure datum on $A$.
 
 Now we must show that every $\prod_i\permgroup{n_i}$-closure datum on $A$ arises in this way.
 Given a homomorphism $\phi:\fixpower{A}{n}{\prod_i\permgroup{n_i}}\to R$ restricting to the Ferrand homomorphism, consider for each $i\in\set{k}$ the image under $\phi\otimes\id_{R[\lambda]}$ of the $\prod_i\permgroup{n_i}$-invariant element
 \[1^{\otimes n_1} \otimes \dots \otimes 1^{\otimes n_{i-1}} \otimes (\lambda - x)^{\otimes n_i}\otimes 1^{\otimes n_{i+1}} \otimes \dots \otimes 1^{\otimes n_k};\]
 this is a monic degree-$n_i$ polynomial in $R[\lambda]$ which we denote by $f_i(\lambda)$.
 Then because the product of these $k$ invariant elements is $(\lambda-x)^{\otimes n}$ and is sent to $f(\lambda)$, we therefore have a factorization $f(\lambda) = \prod_{i=1}^k f_i(\lambda)$.
 To see that this factorization gives rise to the closure datum $\phi:\fixpower{A}{n}{\prod_i \permgroup{n_i}} \cong \bigotimes_i \fixpower{A}{n_i}{\permgroup{n_i}}\to R$, we show that $\phi$'s $i$th component $\fixpower{A}{n_i}{\permgroup{n_i}}\to R$ factors via $\fixpower{A_i}{n_i}{\permgroup{n_i}}$.
 By the fundamental theorem of elementary symmetric polynomials, it is sufficient to check the images of each element of the form $e_\ell(x)\in\fixpower{A}{n_i}{\permgroup{n_i}}$ for $\ell\in\set{n_i}$.
 We may check these simultaneously by adjoining an auxiliary indeterminate $\lambda$ and considering the single element $(\lambda-x)^{\otimes n_i} = \sum_{\ell=1}^{n_i}(-1)^\ell \lambda^\ell e_\ell(x)$.
 Applying the homomorphism $\fixpower{A}{n_i}{\permgroup{n_i}}\onto \fixpower{A_i}{n_i}{\permgroup{n_i}}\to R$ coefficientwise, this element is sent to the characteristic polynomial of $x$ in $A_i$, namely $f_i(\lambda)$.
 But this is equal, by definition of $f_i$, to the image of $(\lambda-x)^{\otimes n_i}$ under $\phi$'s $i$th component $\fixpower{A}{n_i}{\permgroup{n_i}}\to R$.
 Thus we recover $\phi$ as the $\prod_i\permgroup{n_i}$-closure datum corresponding to the factorization $f = \prod_i f_i$.
 
 Now we check that given such a factorization $f(x) = f_1(x)\dots f_k(x)$, the corresponding $\prod_i\permgroup{n_i}$-closure algebra is isomorphic to $\bigotimes_i A_i^{\otimes n_i} / \Ferrand{A_i}$.
Let $\phi$ be the associated closure datum 
\[\fixpower{A}{n}{\prod_i \permgroup{n_i}} \cong \bigotimes_{i=1}^k \fixpower{A}{n_i}{\permgroup{n_i}}\onto \bigotimes_{i=1}^k \fixpower{A_i}{n_i}{\permgroup{n_i}}\to R.\]
We have the isomorphism
 \begin{align*}
  A^{\otimes n}/\Ferrand{A} = A^{\otimes n} \ \midotimes_{\mathclap{\fixpower{A}{n}{\prod_i \permgroup{n_i}}}}\  R \cong \bigotimes_{i=1}^k A^{\otimes n_i} \ \midotimes_{\mathclap{\fixpower{A}{n_i}{\permgroup{n_i}}}}\ R,
 \end{align*}
 where for each $i\in\set{k}$ the map $\fixpower{A}{n_i}{\permgroup{n_i}}\to R$ is the composite
 \[\begin{tikzcd}\fixpower{A}{n_i}{\permgroup{n_i}}\onto \fixpower{A_i}{n_i}{\permgroup{n_i}}\arrow{r}{\Ferrand{A_i}} & R\end{tikzcd}.\]
 Note that this homomorphism tensored with $R[\lambda]$ sends the element
 \[\prod_{j=1}^{n_i} (\lambda - \conjugate{x}{j}) = (\lambda - x) \otimes \dots \otimes (\lambda - x) \mapsto \norm{A_i[\lambda]}(\lambda - x) = f_i(\lambda),\]
 and thus in $A^{\otimes n_i}\otimes_{\fixpower{A}{n_i}{\permgroup{n_i}}} R$ we find that each $f_i(\conjugate xj) = \prod_{j'}(\conjugate xj - \conjugate x{j'}) = 0$.
 Therefore we have
 \[A^{\otimes n_i} \ \midotimes_{\mathclap{\fixpower{A}{n_i}{\permgroup{n_i}}}}\ R \cong A_i^{\otimes n_i} \ \midotimes_{\mathclap{\fixpower{A}{n_i}{\permgroup{n_i}}}}\ R.\]
 Now the two homomorphisms from $\fixpower{A}{n_i}{\permgroup{n_i}}$ in the tensor product both factor through its quotient $\fixpower{A_i}{n_i}{\permgroup{n_i}}$, so we obtain
 \[A_i^{\otimes n_i} \ \midotimes_{\mathclap{\fixpower{A}{n_i}{\permgroup{n_i}}}}\ R \cong A_i^{\otimes n_i} \ \midotimes_{\mathclap{\fixpower{A_i}{n_i}{\permgroup{n_i}}}}\ R = A_i^{\otimes n_i}/\Ferrand{A_i}.\]
 Thus $A^{\otimes n}/\phi \cong \bigotimes_{i=1}^k A_i^{\otimes n_i}/\Ferrand{A_i}$ as desired.
\end{proof}

\begin{corollary}
 If $f(x)\in R[x]$ is irreducible, then every $G\subset\permgroup{n}$ for which $R[x]/(f(x))$ has a $G$-closure datum acts transitively on $\set{n}$.
\end{corollary}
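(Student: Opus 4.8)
The plan is to prove the contrapositive: if $G\subset\permgroup{n}$ is \emph{intransitive}, then $A:=R[x]/(f(x))$ admits a $G$-closure datum only when $f$ factors nontrivially. So suppose $(G,\phi)$ is a closure datum for $A$ over $R$, and let $O_1,\dots,O_k$ be the orbits of $G$ on $\set{n}$, with $n_i:=\card{O_i}$; intransitivity (together with $n\geq 2$, the case $n\leq 1$ being vacuous since $\permgroup{0}$ and $\permgroup{1}$ are transitive) means $k\geq 2$. Every element of $G$ stabilizes each orbit setwise, so $G\subset\permgroup{O_1}\times\dots\times\permgroup{O_k}$, the subgroup of $\permgroup{n}$ permuting each $O_i$ within itself. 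Choosing a permutation $\sigma\in\permgroup{n}$ that carries each $O_i$ bijectively onto the standard block $\{n_1+\dots+n_{i-1}+1,\dots,n_1+\dots+n_i\}$, we obtain $\sigma G\sigma^{-1}\subset\permgroup{n_1}\times\dots\times\permgroup{n_k}$, where the latter sits inside $\permgroup{n}$ via the standard embedding used in \cref{main-products}. Moreover $\bigl(\sigma G\sigma^{-1},\ \phi\circ(\id_A)^{\otimes\sigma^{-1}}\bigr)$ is again a closure datum for the \emph{same} algebra $A$ over $R$, by the conjugation operation on closure data described in \cref{section_relationships}.

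Next I would induce this conjugated closure datum up along the inclusion $\sigma G\sigma^{-1}\subset\permgroup{n_1}\times\dots\times\permgroup{n_k}$ — that is, restrict the homomorphism to $\fixpower{A}{n}{\permgroup{n_1}\times\dots\times\permgroup{n_k}}$ — which, by the proposition on inducing closure data in \cref{section_relationships}, yields a $\permgroup{n_1}\times\dots\times\permgroup{n_k}$-closure datum for $A$ over $R$. Now \cref{factorization-data} applies directly: such a closure datum corresponds to a factorization $f(x)=f_1(x)\cdots f_k(x)$ into monic polynomials $f_i$ of degrees $n_1,\dots,n_k$ respectively. Since $k\geq 2$ and each $n_i\geq 1$, both $f_1$ and $f_2\cdots f_k$ are monic of positive degree, hence nonunits in $R[x]$; this is a nontrivial factorization of $f$, contradicting its irreducibility. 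Therefore no intransitive $G$ admits a $G$-closure datum for $A$, proving the corollary.

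The argument is essentially bookkeeping once \cref{factorization-data} and the inducing and conjugating operations on closure data are available, so I do not expect a genuine obstacle. The one point deserving a little care is the reduction to the \emph{standard} Young subgroup, since \cref{factorization-data} is phrased only for that embedding; this is exactly what the conjugating permutation $\sigma$ accomplishes, and one must note that conjugation keeps the underlying algebra $A$ fixed (only $G$ and $\phi$ change). It is also worth pinning down the convention that "$f$ irreducible" means $f$ admits no factorization into monic factors of positive degree, so that the concluding contradiction is literally the negation of irreducibility.
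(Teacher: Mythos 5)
Your proof is correct and follows the route the paper intends: the corollary is stated as an immediate consequence of \cref{factorization-data}, and you supply exactly the missing bookkeeping — conjugating an intransitive $G$ into the standard Young subgroup $\permgroup{n_1}\times\dots\times\permgroup{n_k}$, inducing the closure datum up, and reading off a nontrivial monic factorization of $f$, contradicting irreducibility.
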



\begin{remark}\label{domain-unique}
 Recall the question of whether the minimal closure data for a given algebra are isomorphic.
 This holds for all free quadratic $R$-algebras if and only if $R$ is a domain, as we claimed in \cref{minimal-uniqueness}.
 Namely, suppose that $R$ is a domain and $A$ is a free quadratic $R$-algebra.
 Then $A/R\cong \extpower^2 A$ is also free, so we can choose a basis for $A$ of the form $\{1,a\}$, so that $A\cong R[x]/(x^2-bx+c)$ for some $b,c\in R$.
 If the $\permgroup{2}$-closure datum of $A$ is minimal, then it is the unique closure datum for $A$ over $R$, so the minimal closure data are trivially isomorphic.
 Otherwise, there is a $1 = \permgroup{1}\times\permgroup{1}$-closure datum for $A$ over $R$, corresponding to a factorization of $x^2-bx+c$ into linear factors over $R$.
 If we have two such factorizations
 \[x^2-bx+c = (x-r)(x-s) = (x-t)(x-u),\]
 then we have $rs = c = tu = t(b-t) = t(r+s-t)$, so $(r-t)(s-t) = 0$.
 So since $R$ is a domain, we must have $r=t$ (and $s=u$) or $s=t$ (and $r=u$); either way, the two factorizations correspond to isomorphic closure data.
 
 Conversely, if $rs=0$ in $R$ with $r$ and $s$ nonzero, then we have
 \[(x-r)(x-s) = x\bigl(x-(r+s)\bigr),\]
 factorizations which correspond to two non-isomorphic $1$-closure data for $R[x]/(x^2-(r+s)x)$ over $R$.
\end{remark}

\begin{remark}
More generally, isomorphic $\permgroup{n_1}\times\dots\times\permgroup{n_k}$-closure data correspond to factorizations that differ only in the order of factors of the same degree.
The group of such reorderings is exactly the quotient by $\permgroup{n_1}\times\dots\times\permgroup{n_k}$ of its normalizer in $\permgroup{n}$---see \cref{group-actions}.
\end{remark}

\subsection{Parameterizing $G$-closure data}


Note that in the case of monogenic algebras, \cref{sqrt-disc} gives us the following criterion for a monogenic rank-$n$ algebra to have an $\altgroup{n}$-closure datum:

\begin{theorem}
 Let $R$ be a ring in which $2$ is a primoid non-zerodivisor, and let $A=R[x]/(f(x))$ be a monogenic rank-$n$ $R$-algebra.  
 Then $\altgroup{n}$-closure data for $A$ over $R$ correspond to square roots of the discriminant of $f$.
\end{theorem}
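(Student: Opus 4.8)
The plan is to obtain this as an immediate corollary of \cref{sqrt-disc}. Since $A = R[x]/(f(x))$ is monogenic of rank $n$, it is free as an $R$-module, with the power basis $(1, x, x^2, \dots, x^{n-1})$. The hypothesis that $2$ is a primoid non-zerodivisor in $R$ is precisely what \cref{sqrt-disc} demands, so that theorem applies directly and tells us that $\altgroup{n}$-closure data for $A$ over $R$ correspond bijectively to square roots in $R$ of the discriminant of $A$ with respect to the power basis.

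What remains is to identify the discriminant of $A$ with respect to $(1, x, \dots, x^{n-1})$ with the discriminant of the polynomial $f$. By definition the former is the Gram determinant $\det\bigl(\trace(x^{i+j})\bigr)_{0 \le i, j \le n-1}$ of the trace form on the power basis, and the claim is the classical identity that this equals $(-1)^{n(n-1)/2}\norm[R]{A}(f'(x))$, the usual discriminant of the monic polynomial $f$. To prove it over an arbitrary base $R$, I would observe that both sides are the image, under the map $\Z[a_0, \dots, a_{n-1}] \to R$ classifying the coefficients of $f$, of the corresponding expressions attached to the generic monic polynomial $x^n + a_{n-1}x^{n-1} + \dots + a_0$ over $\Z[a_0, \dots, a_{n-1}]$; since that ring embeds into a ring over which the generic polynomial splits, the identity reduces to the familiar computation that, in terms of the roots $r_i$, the trace $\trace(x^k)$ is the power sum $\sum_i r_i^k$, so the Gram matrix of the power basis is $V\transpose{V}$ for the Vandermonde matrix $V = (r_i^j)$, and hence has determinant $\prod_{i<j}(r_i - r_j)^2 = \operatorname{disc}(f)$.

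Combining the two steps, $\altgroup{n}$-closure data for $A$ over $R$ correspond to square roots of the discriminant of $f$ in $R$, as claimed. There is essentially no obstacle here: the substantive content is entirely packaged in \cref{sqrt-disc}, and the only thing to verify beyond invoking it is the identification of the module discriminant of the power basis with the polynomial discriminant, which is standard.
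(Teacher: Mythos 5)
Your proposal is correct and matches the paper's own route: the paper states this theorem as a direct corollary of \cref{sqrt-disc}, observing that a monogenic algebra is free on its power basis, and you have simply spelled out the remaining standard identification of the trace-form discriminant of the power basis with the polynomial discriminant of $f$ (via the Vandermonde/specialization argument). The only cosmetic quibble is that with your convention $V = (r_i^j)$ the Gram matrix is $\transpose{V}V$ rather than $V\transpose{V}$, but both have determinant $\det(V)^2$, so the conclusion is unaffected.
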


We wish to produce similar parameterizations of monogenic algebras' closure data for other groups than $\altgroup{n}$, where closure data for $A$ over $R$ correspond to solutions in $R$ of certain polynomial equations whose coefficients depend on $A$.
The main goal of this section is \cref{specialparametrize}, which abstractly allows one to produce sucha  parameterization whenever the pair (ring $R$, group $G$) forms a ``benign pair,'' to be defined below.


Recall that an $R$-module $M$ is called \emph{faithful} if no nonzero element of $R$ acts as zero on $M$, and that an $R$-algebra $B$ is faithful as an $R$-module if and only if the structure map $R\to B$ is injective.

\begin{definition}
 Let $R$ be a ring and $G$ a subgroup of $\permgroup{n}$ for a fixed natural number $n$.
 We say that the pair $(R,G)$ is \emph{benign} if for every $R$-algebra $B$ with an action of $G$ by $R$-algebra homomorphisms, and for every $R$-algebra homomorphism $B^G\to R$, the resulting tensor product $B\otimes_{B^G} R$ is a faithful $R$-algebra.
\end{definition}

\begin{lemma}
 Either of the following two conditions is sufficient for the pair $(R,G)$ to be a benign pair:
 \begin{enumerate}
  \item $R$ is reduced.
  \item $\card{G}$ is a non-zerodivisor in $R$.
 \end{enumerate}
\end{lemma}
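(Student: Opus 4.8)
The statement to prove is that each of the two conditions---(1) $R$ reduced, (2) $\card{G}$ a non-zerodivisor in $R$---implies that $(R,G)$ is benign. So I would fix an $R$-algebra $B$ with a $G$-action by $R$-algebra automorphisms, an $R$-algebra homomorphism $\psi\colon B^G\to R$, and attempt to show that the structure map $R\to B\otimes_{B^G}R$ is injective under either hypothesis. Write $C := B\otimes_{B^G}R = B/(\beta-\psi(\beta):\beta\in B^G)$; an element $r\in R$ maps to zero in $C$ iff $r\cdot 1_B$ lies in the ideal $I := (\beta-\psi(\beta):\beta\in B^G)B$ of $B$. The goal in each case is to deduce $r=0$ from $r\in I$ (after identifying $R$ with its image $R\cdot 1_B$, which we may do since injectivity of $R\to C$ is exactly what's in question---more carefully, one shows $r\cdot 1_C=0\implies r=0$).

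For case (2): the key tool is the ``averaging'' or ``norm'' idea. I would consider, for the element $r\in R$ in question, the fact that $r\cdot 1_B\in I$ means $r\cdot 1_B = \sum_j (\beta_j-\psi(\beta_j))b_j$ for finitely many $\beta_j\in B^G$, $b_j\in B$. The plan is to apply a suitable $G$-equivariant retraction. Since $\card{G}$ is invertible---wait, it's only a non-zerodivisor, not a unit---so the cleaner approach is: base-change everything to $R[1/\card G]$, where $\card G$ becomes a unit and hence $R\to R[1/\card G]$ is injective, so it suffices to prove benignity there; over $R[1/\card G]$ one has the averaging operator $e := \frac{1}{\card G}\sum_{\sigma\in G}\sigma$ which is an $R$-module projection $B\to B^G$ that is a $B^G$-module splitting. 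Then $e(I) = I\cap\ldots$; more precisely, applying $e$ to the relation $r\cdot 1_B=\sum_j(\beta_j-\psi(\beta_j))b_j$ and using that $e$ fixes $B^G$ pointwise and is $B^G$-linear gives $r\cdot 1_B = \sum_j(\beta_j-\psi(\beta_j))e(b_j)$ with $e(b_j)\in B^G$. Now applying $\psi$ (which is a ring hom $B^G\to R$ with $\psi(1)=1$) kills every term $\beta_j-\psi(\beta_j)$, leaving $\psi(r\cdot1_B)=0$, i.e.\ $r\psi(1_B)=r=0$. So case (2) reduces to the observation that the averaging projection exists after inverting $\card G$, plus this one-line computation.

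For case (1): when $R$ is reduced, $B\otimes_{B^G}R$ might not be reduced, so I can't just say ``a reduced ring over a reduced ring''—instead the plan is to use nilpotents differently. If $r\in R$ maps to $0$ in $C=B\otimes_{B^G}R$, I want $r=0$. The idea: consider the element $N := \prod_{\sigma\in G}\sigma(b)$ for generic $b$, or rather consider that the \emph{norm} map $B\to B^G$, $b\mapsto\prod_{\sigma\in G}\sigma(b)$, is multiplicative, and the \emph{composite} $B\xrightarrow{\mathrm{Nm}} B^G\xrightarrow{\psi} R$. One shows that $r$ being zero in $C$ implies some power of $r$, namely $r^{\card G}$, equals $\psi(\mathrm{Nm}(1_B-(\text{something in }I)))\cdot(\ldots)$; more cleanly: if $r\cdot 1_B\in I$, then $r\cdot 1_B - r\cdot 1_B\cdot(\text{unit})$... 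Actually the cleanest route: $r\cdot 1_C=0$ means $1_C$ is annihilated by $r$ in $C$, and since $C$ is a quotient of $B$, we have $r\in I$. Applying the norm form: consider that in $C$, $r=0$, so for the image $\bar b$ of any $b\in B$ in $C$ we'd like to extract information---but the sharp statement is that the kernel of $R\to C$ is a nil ideal when we only know... hmm, this needs the multiplicative norm polynomial law argument, analogous to the proof of \cref{ek-to-sk}. The expected main obstacle is precisely case (1): I anticipate one must show the kernel $\ker(R\to C)$ is nilpotent (via a norm/resultant-type identity showing $r\in\ker\implies r^{\card G}=0$ or $r^m=0$ for suitable $m$), and then invoke reducedness of $R$ to conclude $r=0$. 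Assembling the correct nilpotency identity---likely by applying $\psi$ to $\mathrm{Nm}_{B/B^G}$ evaluated on a lift of the relation $r\cdot 1=0$, exploiting that $\mathrm{Nm}$ is a degree-$\card G$ multiplicative polynomial law---is the delicate step, and I would model it on the polynomial-law techniques already invoked for the Ferrand homomorphism earlier in the paper.
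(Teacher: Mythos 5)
Your plan for condition (2) is essentially the paper's argument, but with an unnecessary preliminary step. The paper works directly over $R$: the map $B\to B^G$, $b\mapsto\sum_{g\in G}g.b$, is a $B^G$-module homomorphism whose restriction to $B^G$ is multiplication by $\card G$; base-changing this $B^G$-linear diagram along $\psi\colon B^G\to R$ shows that the composite $R\to B\otimes_{B^G}R\to R$ is multiplication by $\card G$, which is injective since $\card G$ is a non-zerodivisor, hence so is the first map. Your version first inverts $\card G$ so that the averaging operator $\frac{1}{\card G}\sum_\sigma\sigma$ is an honest retraction, and then recovers injectivity over $R$ from injectivity of $R\to R[1/\card G]$; this is correct (you should note that $(B[1/\card G])^G=(B^G)[1/\card G]$ because localization is exact and hence commutes with the kernel defining invariants), but the localization is extra work the paper avoids.

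For condition (1), however, there is a genuine gap. You correctly identify the target---show that $\ker(R\to B\otimes_{B^G}R)$ is a nil ideal and then invoke reducedness---but you explicitly leave the key step (``assembling the correct nilpotency identity'') unresolved, and the route you sketch does not straightforwardly work: the norm $\norm[B^G]{B}\colon b\mapsto\prod_\sigma\sigma(b)$ is multiplicative but not additive, so there is no direct way to ``apply $\norm{}$ to the relation $r\cdot 1_B=\sum_j(\beta_j-\psi(\beta_j))b_j$'' and extract a power of $r$. The paper's proof takes a different and complete route: since every $b\in B$ is a root of the monic polynomial $\prod_{\sigma\in G}(x-\sigma(b))\in B^G[x]$, the extension $B^G\hookrightarrow B$ is integral, so $\spec(B)\to\spec(B^G)$ is surjective by lying-over; surjectivity of morphisms of affine schemes is stable under base change, so $\spec(B\otimes_{B^G}R)\to\spec(R)$ is surjective; and a ring map with surjective spectrum map has nilpotent kernel, which by reducedness of $R$ forces $\ker(R\to B\otimes_{B^G}R)=0$. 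So the integrality/lying-over mechanism is the ``norm-type'' input you were looking for, but it enters through a standard scheme-theoretic lemma rather than through a hands-on identity, and that substitution is what you were missing.
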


\begin{proof}
 Recall that given a homomorphism of rings $f:A\to B$, the corresponding map of schemes $\spec(B)\to\spec(A)$ is surjective if and only if the kernel of $f$ consists of nilpotents.
 Then if $R$ is reduced, injectivity of $R\to B\otimes_{B^G} R$ is equivalent to surjectivity of $\spec(B\otimes_{B^G} R)\to\spec(R)$.
 But this is guaranteed by surjectivity of $\spec(B)\to\spec(B^G)$, because $B^G\to B$ is injective and surjectivity of morphisms of schemes is preserved under base change.
 
 Now suppose instead that $\card{G}$ is a non-zerodivisor in $R$.
 Then consider the $B^G$-module homomorphism $B\to B^G$ sending $b\mapsto \sum_{g\in G}g.b$.
 On elements $b$ that are already fixed by $G$, each term in the sum is just $b$ again, so the composite
 \[B^G\to B \to B^G\]
 is  multiplication by $\card{G}$.
 After base changing along the given $R$-algebra homomorphism $B^G\to R$, then, we find that the composite
 \[R \to B\otimes_{B^G} R \to R\]
 is multiplication by $\card{G}$, which is injective since $\card{G}$ is a non-zerodivisor.
 Therefore the map $R\to B\otimes_{B^G} R$ must be injective as well.
\end{proof}

If $(R,G)$ is a benign pair with $G\subset\permgroup{n}$, then for every rank-$n$ $R$-algebra $A$ with a $G$-closure datum $\phi$, we find that the homomorphism $R\to A^{\otimes n}/\phi$ is injective.
 Not every pair is benign, however, and not every closure algebra is faithful:

\begin{example}
 Let $R=\Z/(9)$ and $G=\altgroup{3}$.
 Then the $R$-algebra $A=R[x]/(x^3)$ has a $G$-closure $\phi$ for which the map $R\to A^{\otimes 3}/\phi$ sends $3$ to $0$.
 
 Namely, since $2$ is a unit in $R$ we have a correspondence between $\altgroup{3}$-closure data for $A$ over $R$ and square roots in $R$ of the discriminant of $A$, which vanishes.
 If we choose the square root $3$ of $0$ in $R$, the corresponding $\altgroup{3}$-closure datum sends $\gamma(1,x,x^2) - \gamma(1,x^2,x)$ to $3\in R$, and since the sum $\gamma(1,x,x^2) + \gamma(1,x^2,x)$ must be sent to zero by \cref{monogenic-A3-example}, we find that $\gamma(1,x,x^2)\mapsto 6$ and $\gamma(1,x^2,x)\mapsto 3$.
 
 Then in the closure algebra $A^{\otimes 3}/\phi$, we find that
 \begin{align*}
  \gamma(1,x^2,x) &= 1\otimes x^2 \otimes x + x^2 \otimes x \otimes 1 + x \otimes 1 \otimes x^2\\
  &= (1\otimes x^2 \otimes 1) (-x\otimes 1 \otimes 1-1\otimes x\otimes 1)\\
  &\quad + x^2\otimes x \otimes 1 \\
  &\quad + (x\otimes 1 \otimes 1)(-x\otimes 1 \otimes 1-1\otimes x\otimes 1)^2,
  \intertext{using the relation $x\otimes 1\otimes 1 + 1\otimes x\otimes 1 + 1\otimes 1\otimes x = \trace_{A}(x) = 0$,}
  &= -x\otimes x^2\otimes 1 + x^2\otimes x\otimes 1 + x\otimes x^2\otimes 1 + 2x^2\otimes x\otimes 1\\
  &= 3x^2 \otimes x\otimes 1.
 \end{align*}
Therefore $3 = 3x^2\otimes x\otimes 1$.
Multiplying both sides by $x^2\otimes x\otimes 1$, we find that $3x^2\otimes x \otimes 1 = 0$.
Thus by transitivity, $3=0$ in the closure algebra.
\end{example}

\begin{remark}
 Even though the pair $(R,\permgroup{n})$ is not always benign, the $\permgroup{n}$-closure of a rank-$n$ $R$-algebra $A$ is always faithful.
 If we use the Ferrand homomorphism to equip the $R$-module $\extpower^n A$ with an $\fixpower{A}{n}{\permgroup{n}}$-module structure, then the defining surjection $A^{\otimes n}\to\extpower^n A$ is actually a $\fixpower{A}{n}{\permgroup{n}}$-module homomorphism by \cite[Lemma 4.2]{Bie15}.
 Then tensoring with $R$ over $\fixpower{A}{n}{\permgroup{n}}$ gives a surjection
 \[\gclose{A}{n}{\Ferrand{A}} \ =\  A^{\otimes n} \ \midotimes_{\mathclap{\fixpower{A}{n}{\permgroup{n}}}}\ R \ \onto\  \extpower^n A\ \midotimes_{\mathclap{\fixpower{A}{n}{\permgroup{n}}}}\ R \ \cong\  \extpower^n A.\]
 Since $\extpower^n A$ is a locally free $R$-module of rank $1$, it is faithful, and therefore $\gclose{A}{n}{\Ferrand{A}}$ must be too.
\end{remark}

But supposing that the pair $(R,G)$ \emph{is} benign, then we obtain the following parameterization of $G$-closure data for monogenic $R$-algebras:

\begin{lemma}\label{specialparametrize}
 Let $R$ be a ring and $A$ be a monogenic $R$-algebra with generator $a$.
 Given a closure datum $(G,\phi)$ for $A$ over $R$, we may compose $\phi$ with the projection $\fixpower{R[x]}{n}{G}\to\fixpower{A}{n}{G}$ to obtain an $R$-algebra homomorphism $\fixpower{R[x]}{n}{G}\to R$ such that $e_k(x)\mapsto s_k(a)$ for all $k\in\set{n}$.
 
 If $G$ is a subgroup of $\permgroup{n}$ for which $(R,G)$ is benign, then this operation forms a one-to-one correspondence between $G$-closure data for $A$ over $R$ and such homomorphisms $\fixpower{R[x]}{n}{G}\to R$.
\end{lemma}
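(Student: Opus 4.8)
The plan is to reduce everything to a property of the projection $\pi_G\colon\fixpower{R[x]}{n}{G}\to\fixpower{A}{n}{G}$ and then to see that ``benign'' is exactly the hypothesis needed to factor homomorphisms through it. Since $a$ generates the rank-$n$ algebra $A$, we have $A\cong R[x]/(f(x))$ with $f=\charpoly{a}$ monic of degree $n$ and $a$ the image of $x$; write $B:=R[x]^{\otimes n}=R[\conjugate{x}{1},\dots,\conjugate{x}{n}]$ and $I:=(f(\conjugate{x}{1}),\dots,f(\conjugate{x}{n}))$, so that $A^{\otimes n}=B/I$ and $\fixpower{A}{n}{G}=(B/I)^{G}$. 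Because $R[x]\onto A$ splits as a homomorphism of $R$-modules ($f$ is monic), applying the functor $\fixpower{(\cdot)}{n}{G}$ shows that $\pi_G\colon B^{G}\onto(B/I)^{G}$ is a (split) surjection of $R$-algebras with kernel $B^{G}\cap I$. The forward assignment $(G,\phi)\mapsto\phi\circ\pi_G$ lands in the stated set of homomorphisms---since $\pi_G$ sends $e_k(x)$ to $e_k(a)$ while $\phi$ extends $\Ferrand[R]{A}$, which sends $e_k(a)$ to $s_k(a)$ by \cref{ek-to-sk}---and it is injective because $\pi_G$ is surjective. So the whole content is that, when $(R,G)$ is benign, every $R$-algebra homomorphism $\psi\colon\fixpower{R[x]}{n}{G}\to R$ with $\psi(e_k(x))=s_k(a)$ for all $k$ factors (necessarily uniquely) through $\pi_G$, and the resulting map $\phi\colon\fixpower{A}{n}{G}\to R$ extends the Ferrand homomorphism.

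For the crux I would fix such a $\psi$ and form the $R$-algebra $C:=B\otimes_{B^{G}}R$ along $\psi$ (with $G$-action descended from $B$); benign-ness says precisely that the structure map $R\to C$ is injective. The key computation reads the image in $C[\lambda]$ of the element $(\lambda-x)^{\otimes n}=\prod_{i=1}^{n}(\lambda-\conjugate{x}{i})\in B[\lambda]$ in two ways: writing $y_i$ for the image of $\conjugate{x}{i}$ in $C$, it equals $\prod_{i}(\lambda-y_i)$; and, since its coefficients already lie in $B^{G}$ (they are $\pm e_k(x)$) and the composite $B^{G}\to B\to C$ is $\psi$ followed by $R\to C$, it also equals $\lambda^{n}-s_1(a)\lambda^{n-1}+\dots+(-1)^{n}s_n(a)=\charpoly{a}(\lambda)=f(\lambda)$ (the last equality because $A=R[x]/(f)$, so $\charpoly{a}$ is the companion polynomial of $f$). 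Substituting $\lambda=y_j$ into $\prod_{i}(\lambda-y_i)=f(\lambda)$ and using that the $i=j$ factor vanishes gives $f(y_j)=0$ in $C$ for every $j$; hence $B\to C$ kills $I$ and factors through $A^{\otimes n}=B/I$. Now for any $\beta\in B^{G}\cap I=\ker\pi_G$, its image in $C$ is $0$ because $\beta\in I$, but it is also $\psi(\beta)\cdot 1_C$ because $\beta\in B^{G}$; injectivity of $R\to C$ then forces $\psi(\beta)=0$. Thus $\psi$ vanishes on $\ker\pi_G$ and descends to a unique $\phi\colon\fixpower{A}{n}{G}\to R$ with $\phi\circ\pi_G=\psi$.

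It then remains to check that $(G,\phi)$ really is a closure datum and that the two assignments are mutually inverse. For the former, precompose $\phi|_{\fixpower{A}{n}{\permgroup{n}}}$ with the surjection $\pi_{\permgroup{n}}\colon\fixpower{R[x]}{n}{\permgroup{n}}\onto\fixpower{A}{n}{\permgroup{n}}$: the result equals $\psi|_{\fixpower{R[x]}{n}{\permgroup{n}}}$, which agrees with $\Ferrand[R]{A}\circ\pi_{\permgroup{n}}$ because both are $R$-algebra homomorphisms sending each $e_k(x)$ to $s_k(a)$ and $\fixpower{R[x]}{n}{\permgroup{n}}=R[e_1(x),\dots,e_n(x)]$ is generated by those elements (fundamental theorem of symmetric polynomials); surjectivity of $\pi_{\permgroup{n}}$ then gives $\phi|_{\fixpower{A}{n}{\permgroup{n}}}=\Ferrand[R]{A}$. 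Mutual inverseness is immediate: $(G,\phi)\mapsto\phi\circ\pi_G\mapsto\phi$ and $\psi\mapsto\phi\mapsto\phi\circ\pi_G=\psi$, both by uniqueness of a factorization through the surjection $\pi_G$; this yields the asserted bijection.

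The step I expect to be the main obstacle is forcing $\psi$ to vanish on $\ker\pi_G$. The real point there is recognizing that the candidate closure algebra $C=B\otimes_{B^{G}}R$ is automatically a quotient of $A^{\otimes n}$---this is where it matters that $\psi$ turns $\prod_{i}(\lambda-\conjugate{x}{i})$ into the genuine characteristic polynomial $f$ of $a$---so that faithfulness of $C$, the defining property of a benign pair, is precisely what bridges ``$\beta$ dies in $A^{\otimes n}$'' and ``$\psi(\beta)=0$''. Everything else---functoriality of $\fixpower{(\cdot)}{n}{G}$ on split epimorphisms, the companion-matrix identity $\charpoly{a}=f$, and the symmetric-function generation of $\fixpower{R[x]}{n}{\permgroup{n}}$---is routine.
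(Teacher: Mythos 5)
Your proof is correct and follows essentially the same route as the paper: both form $T = R[x]^{\otimes n}\otimes_{\fixpower{R[x]}{n}{G}}R$, use the benign hypothesis to get $R\hookrightarrow T$, show via the symmetric-function identity that each $f(\conjugate{x}{i})$ dies in $T$ so the map factors through $A^{\otimes n}$, and then descend to a map on $G$-invariants by surjectivity of $\pi_G$. Your explicit identification $\ker\pi_G = B^G\cap I$ and the vanishing $\psi(\beta)=0$ is just a slightly more pointwise phrasing of the paper's diagram-chase.
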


In particular, if we can present $\fixpower{R[x]}{n}{G}$ as an algebra over $\fixpower{R[x]}{n}{\permgroup{n}}$, then $G$-closure data for $A$ over $R$ will correspond to solutions in $R$ of a list of polynomial equations, the way $\altgroup{n}$-closure data correspond to square roots of the discriminant.
In the next section, we will do just that in the case $G=\dihedralgroup{4} = \langle(13),(1234)\rangle\subset\permgroup{4}$.
Since the publication of this argument in the author's PhD thesis, Riccardo Ferrario has produced similar results in \cite{Fer14} for the cases $\kleinfour = \langle(12)(34),(13)(24)\rangle$ and $\cyclicgroup{4} = \langle(1234)\rangle$.
The parameterization of $\kleinfour$-closure data is very similar to the one that follows for $\dihedralgroup{4}$---they correspond to \emph{splittings} of the cubic resolvent instead of roots---but so far there is no nice interpretation for the parameterization of $\cyclicgroup{4}$-closure data.

\begin{proof}[Proof of \cref{specialparametrize}]
 Suppose that $(G,\phi)$ is a closure datum for $A$ over $R$.
 Then under the composite $\fixpower{R[x]}{n}{G}\to\fixpower{A}{n}{G}\to R$, we have $e_k(x)\mapsto e_k(a)\mapsto s_k(a)$.
 
 Now conversely, suppose that $\fixpower{R[x]}{n}{G}\to R$ is an $R$-algebra homomorphism sending $e_k(x)$ to $s_k(a)$ for all $k\in\set{n}$, and use the hypothesis that $(R,G)$ is a benign pair to obtain that the resulting tensor product
 \[T:= R[x]^{\otimes n}\ \midotimes_{\mathclap{\fixpower{R[x]}{n}{G}}}\ R\]
 is a faithful $R$-algebra.
 We will fill in the two dashed arrows in the following commutative diagram:
 \[\begin{tikzcd}
 \fixpower{R[x]}{n}{G} \arrow[two heads]{r} \arrow[bend left, two heads]{rr}\arrow[hook]{d} & \fixpower{A}{n}{G}\arrow[dashed]{r}\arrow[hook]{d} & R\arrow[hook]{d}\\
 R[x]^{\otimes n}\arrow[bend right, two heads]{rr}\arrow[two heads]{r} & A^{\otimes n}\arrow[dashed]{r} & T\\
 \end{tikzcd}\]
  For the existence of the lower dashed arrow, notice that for each $i\in\set{n}$, the image of $\charpoly{a}(\conjugate{x}{i})$ under the map $R[x]^{\otimes n}\to T$ is
  \begin{align*}
  \charpoly{a}(\conjugate{x}{i}) &= \sum_{k=0}^n(-1)^ks_k(a)(\conjugate{x}{i})^{n-k} \\
  &= \sum_{k=0}^n (-1)^ke_k(x)(\conjugate{x}{i})^{n-k} = \prod_{j=1}^n(\conjugate{x}{i}-\conjugate{x}{j}) = 0,
  \end{align*}
  so each component of the map factors through the projection $R[x]\onto A\cong R[x]/(\charpoly{a}(x))$.
  
  Then the existence of the upper dashed arrow follows elementarily: we have the composite $\fixpower{A}{n}{G}\into A^{\otimes n}\to T$, and by the commutativity of the rest of the diagram its image is contained in the subring $R$.
  Thus we obtain the existence of a (necessarily unique) map $\fixpower{A}{n}{G}\to R$ commuting with the maps from $\fixpower{R[x]}{n}{G}$.
  In particular, this map is a $G$-closure datum for $A$ over $R$, because $e_k(x)$ in $\fixpower{R[x]}{n}{G}$ is sent to $e_k(a)$ in $\fixpower{A}{n}{G}$ and $s_k(a)$ in $R$.
\end{proof}


\subsection{$\dihedralgroup{4}$-closure data}

A classical result of Galois theory is that the Galois group of a separable irreducible quartic polynomial
\[f(x) = x^4 - s_1x^3 + s_2 x^2 - s_3 x + s_4\] 
is contained in the permutation group $\dihedralgroup{4} = \langle(13),(1234)\rangle\subset\permgroup{4}$ if and only if that polynomial's \emph{cubic resolvent}
\[m(y) = y^3 - (s_2)y^2 + (s_1s_3-4s_4)y - (s_1^2s_4 - 4s_2s_4+s_3^2)\] 
has a root in the base field.
In this section, we prove the following generalization:

\begin{theorem}\label{main-D4}
 Let $R$ be a ring  and let $A=R[x]/(f(x))$ be a monogenic rank-$4$ $R$-algebra.  
 Then $\dihedralgroup{4}$-closure data for $A$ over $R$ correspond to roots of $f$'s cubic resolvent in $R$.
\end{theorem}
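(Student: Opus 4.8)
The plan is to reduce to a universal base ring and then apply \cref{specialparametrize}, so that the theorem becomes an explicit computation of the $\dihedralgroup 4$-invariants of a polynomial ring. Write $f(x) = x^4 - s_1 x^3 + s_2 x^2 - s_3 x + s_4$, set $R_0 = \Z[s_1,s_2,s_3,s_4]$ and $f_0(x) = x^4 - s_1 x^3 + s_2 x^2 - s_3 x + s_4\in R_0[x]$, and let $A_0 = R_0[x]/(f_0(x))$; then $A\cong R\otimes_{R_0}A_0$ along the homomorphism $R_0\to R$ carrying each $s_i$ to the corresponding coefficient of $f$. By \cref{universal-closures}, $\dihedralgroup 4$-closure data for $A$ over $R$ correspond to $R_0$-algebra homomorphisms $\fixpower{A_0}{4}{\dihedralgroup 4}\otimes_{\fixpower{A_0}{4}{\permgroup 4}}R_0\to R$, whereas roots of the cubic resolvent $m(y)$ of $f$ in $R$ correspond to $R_0$-algebra homomorphisms $R_0[y]/(m_0(y))\to R$, where $m_0$ is the resolvent of $f_0$ (its coefficients are the universal polynomials in the $s_i$ displayed before the theorem). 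Hence it suffices to construct an $R_0$-algebra isomorphism $\fixpower{A_0}{4}{\dihedralgroup 4}\otimes_{\fixpower{A_0}{4}{\permgroup 4}}R_0\cong R_0[y]/(m_0(y))$.

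Because $R_0$ is a domain, hence reduced, the pair $(R_0,\dihedralgroup 4)$ is benign, so \cref{specialparametrize} reduces the question to the polynomial ring $R_0[x]^{\otimes 4} = R_0[x_1,x_2,x_3,x_4]$, on which $\dihedralgroup 4 = \langle(13),(1234)\rangle$ acts by permuting the variables and whose $\permgroup 4$-invariants form the polynomial ring $R_0[e_1(x),\dots,e_4(x)]$. The heart of the matter is to show that $\fixpower{R_0[x]}{4}{\dihedralgroup 4} = R_0[x_1,\dots,x_4]^{\dihedralgroup 4}$ is generated over $R_0[e_1(x),\dots,e_4(x)]$ by the single $\dihedralgroup 4$-invariant $\vartheta := x_1x_3 + x_2x_4$, freely with basis $\{1,\vartheta,\vartheta^2\}$, and that the minimal monic relation satisfied by $\vartheta$ is $\prod_{\sigma\in\permgroup 4/\dihedralgroup 4}(y-\sigma\vartheta)$, the monic cubic with roots $x_1x_3+x_2x_4$, $x_1x_2+x_3x_4$, $x_1x_4+x_2x_3$; a routine symmetric-function calculation rewrites this product as $y^3 - e_2 y^2 + (e_1e_3 - 4e_4)y - (e_1^2 e_4 - 4 e_2 e_4 + e_3^2)$ with $e_k = e_k(x)$ --- the ``universal'' cubic resolvent. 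Granting this, $\fixpower{R_0[x]}{4}{\dihedralgroup 4}$ is presented over $R_0[e_1(x),\dots,e_4(x)]$ exactly by that cubic, so by \cref{specialparametrize} a $\dihedralgroup 4$-closure datum for $A_0$ over $R_0$ (equivalently, a homomorphism $\fixpower{R_0[x]}{4}{\dihedralgroup 4}\to R_0$ sending each $e_k(x)$ to $s_k$) is precisely a choice of root of $m_0(y)$ in $R_0$, and this bijection carries the datum attached to a root $r$ to the homomorphism sending $\vartheta\mapsto r$. Since $\vartheta$ also lies in $\fixpower{A_0}{4}{\dihedralgroup 4}$, satisfies the same resolvent cubic over $\fixpower{A_0}{4}{\permgroup 4}$, and $\Ferrand[R_0]{A_0}$ carries the coefficients of that cubic to the coefficients of $m_0$ by \cref{ek-to-sk}, comparing this bijection with the universal-closure description of \cref{universal-closures} identifies $\fixpower{A_0}{4}{\dihedralgroup 4}\otimes_{\fixpower{A_0}{4}{\permgroup 4}}R_0$ with $R_0[y]/(m_0(y))$; the theorem then holds for every $R$ by the base change of the first paragraph.

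The main obstacle is the invariant-theoretic claim in the second paragraph. The identity of the resolvent cubic is elementary, but the assertions that $\vartheta$ \emph{generates} the full $\dihedralgroup 4$-invariant ring over $\Z[e_1,\dots,e_4]$ and does so \emph{freely} --- with the presentation surviving reduction modulo $2$, the delicate case since $\card{\dihedralgroup 4} = 8$ --- are where the work lies. I would establish them by writing down the ``staircase'' monomial basis of $\Z[x_1,\dots,x_4]$ over $\Z[e_1,\dots,e_4]$, verifying degree by degree (using $[\permgroup 4:\dihedralgroup 4] = 3$) that $\Z[e_1,\dots,e_4]\oplus\Z[e_1,\dots,e_4]\vartheta\oplus\Z[e_1,\dots,e_4]\vartheta^2$ already exhausts the $\dihedralgroup 4$-invariants, and noting that the resulting complementary submodule is free over $\Z[e_1,\dots,e_4]$, so that the presentation is stable under the base change $\Z\to R_0$ and hence under every further base change $R_0\to R$.
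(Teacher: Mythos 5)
Your overall architecture matches the paper's: reduce to the universal base $R_0=\Z[s_1,\dots,s_4]$, use \cref{specialparametrize} for a benign pair to transfer closure data to homomorphisms out of the polynomial ring, and present $\fixpower{R_0[x]}{4}{\dihedralgroup 4}$ over $\fixpower{R_0[x]}{4}{\permgroup 4}$ as the quotient by the universal cubic resolvent with the basis $\{1,\vartheta,\vartheta^2\}$, $\vartheta = x_1x_3+x_2x_4$. Your proposed proof of that presentation (a staircase basis, degree by degree, then base change from $\Z$) is a different and perfectly reasonable route from the paper's argument, which instead manipulates $p,p',p''$ and exhibits the $\permgroup{4}$-invariant coefficients $q,r,s$ explicitly as symmetric quotients.

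However, there is a genuine gap at the single decisive step. You want the $R_0$-algebra map
\[
 R_0[y]/(m_0(y)) \;\cong\; \fixpower{R_0[x]}{4}{\dihedralgroup 4}\ \midotimes_{\mathclap{\fixpower{R_0[x]}{4}{\permgroup 4}}}\ R_0 \ \onto\ \fixpower{A_0}{4}{\dihedralgroup 4}\ \midotimes_{\mathclap{\fixpower{A_0}{4}{\permgroup 4}}}\ R_0
\]
to be an isomorphism, but your justification --- ``comparing this bijection with the universal-closure description of \cref{universal-closures}'' --- does not supply the injectivity. The bijection you invoke from \cref{specialparametrize} applied to $R_0$ is between sets of $R_0$-points of the source and target, and both sets are empty ($m_0$ has no root in $\Z[s_1,\dots,s_4]$), so it carries no information. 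More generally, \cref{specialparametrize} only gives a bijection on $\Hom_{R_0}(-,R)$ for reduced (or benign) $R$, which is not enough for a straight Yoneda argument. The trick you need is to take $R$ to be the source algebra itself, $R := R_0[y]/(m_0(y))$, and observe that it is reduced --- because the universal cubic resolvent is an \emph{irreducible} element of $\Z[s_1,\dots,s_4,y]$ (if it were reducible, every quartic would have resolvent with a root and Galois group in $\dihedralgroup 4$). Then \cref{specialparametrize} applies to this $R$, so the identity map on $R$ factors through the displayed surjection, which therefore admits a section and is an isomorphism. Without identifying that particular reduced test ring and checking irreducibility of $m_0$, the argument does not close.

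A smaller point: you should also make explicit why $\vartheta$ together with the image of $\fixpower{A_0}{4}{\permgroup 4}$ generates $\fixpower{A_0}{4}{\dihedralgroup 4}\otimes R_0$, i.e.\ why the map above is even surjective. This follows from surjectivity of $\fixpower{R_0[x]}{4}{\dihedralgroup 4}\to\fixpower{A_0}{4}{\dihedralgroup 4}$, which holds because $A_0$ is a free $R_0$-module so the projection $R_0[x]\onto A_0$ splits and the invariants functor preserves this surjectivity, as in the proof of \cref{fixpower-generators}; this is worth a sentence rather than an implicit appeal.
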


We will do so by first giving generators and relations for $\fixpower{R[x]}{4}{\dihedralgroup{4}}$ as an algebra over $\fixpower{R[x]}{4}{\permgroup{4}}$, and then using this presentation to show that if $R$ is reduced, then $\dihedralgroup{4}$-closure data of $R[x]/(f(x))$ correspond to roots in $R$ of the cubic resolvent of $f(x)$.
Finally, we will carefully lift the condition that $R$ be reduced.


\begin{lemma}
 The ring $\fixpower{\Z[x]}{4}{\dihedralgroup{4}}$ is a free $\fixpower{\Z[x]}{4}{\permgroup{4}}$-module with basis $\{1, \Lambda, \Lambda^2\}$, where $\Lambda = \conjugate{x}{1}\conjugate{x}{3} + \conjugate{x}{2}\conjugate{x}{4}$.
\end{lemma}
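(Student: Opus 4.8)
The plan is to work first over the polynomial ring $\Z[x_1,\dots,x_4]$, where $\permgroup{4}$ acts by permuting the variables, and then descend to $\Z[x]^{\otimes 4}$ via the surjection $\Z[x_1,\dots,x_4]\onto \Z[x]^{\otimes 4}$ sending $x_i\mapsto \conjugate{x}{i}$; this map is $\permgroup{4}$-equivariant, and since $\Z[x]$ is free over $\Z$ it restricts to a surjection on $G$-invariants for every $G$ (by the argument already used in the proof of \cref{fixpower-generators}), and also to the standard isomorphism $\fixpower{\Z[x]}{4}{\permgroup{4}}\cong\Z[s_1,s_2,s_3,s_4]$. So it suffices to show that $\Z[x_1,\dots,x_4]^{\dihedralgroup{4}}$ is free over $\Z[x_1,\dots,x_4]^{\permgroup{4}}=\Z[s_1,s_2,s_3,s_4]$ with basis $\{1,\Lambda,\Lambda^2\}$, where $\Lambda=x_1x_3+x_2x_4$; the same presentation then transports to $\fixpower{\Z[x]}{4}{\dihedralgroup{4}}$ over $\fixpower{\Z[x]}{4}{\permgroup{4}}$.

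The key point is that $\dihedralgroup{4}$ is the stabilizer in $\permgroup{4}$ of the unordered partition $\{\{1,3\},\{2,4\}\}$, hence of the element $\Lambda=x_1x_3+x_2x_4$, and $[\permgroup{4}:\dihedralgroup{4}]=3$: the three cosets correspond to the three elements
\[
\Lambda_1 = x_1x_3+x_2x_4,\qquad \Lambda_2 = x_1x_2+x_3x_4,\qquad \Lambda_3=x_1x_4+x_2x_3,
\]
which $\permgroup{4}$ permutes. First I would observe that $\Z[x_1,\dots,x_4]^{\dihedralgroup{4}}$ contains $\Z[s_1,s_2,s_3,s_4][\Lambda_1]$, and that $\Lambda_1$ satisfies the monic cubic $m(y)=(y-\Lambda_1)(y-\Lambda_2)(y-\Lambda_3)$ whose coefficients are the elementary symmetric functions of the $\Lambda_i$, hence are symmetric polynomials in the $x_i$; a direct (classical) computation identifies $m(y)$ with the cubic resolvent $y^3-s_2y^2+(s_1s_3-4s_4)y-(s_1^2s_4-4s_2s_4+s_3^2)$. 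Thus $\Z[s_1,\dots,s_4][\Lambda_1]$ is a quotient of $\Z[s_1,\dots,s_4][y]/(m(y))$, which is free of rank $3$ over $\Z[s_1,\dots,s_4]$ with basis $1,\Lambda_1,\Lambda_1^2$; so it remains to show (a) that $1,\Lambda_1,\Lambda_1^2$ are $\Z[s_1,\dots,s_4]$-linearly independent, and (b) that they span all of $\Z[x_1,\dots,x_4]^{\dihedralgroup{4}}$.

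For (a), a $\Z[s_1,\dots,s_4]$-linear dependence among $1,\Lambda_1,\Lambda_1^2$ would give a monic degree-$\le 2$ polynomial over the symmetric functions killing $\Lambda_1$; dividing $m(y)$ by it would express $\Lambda_2,\Lambda_3$ as polynomials in $\Lambda_1$ over $\Z[s_1,\dots,s_4]$, which one can rule out by specializing the $x_i$ to generic values (or by a transcendence-degree count: $\Z[s_1,\dots,s_4]$ has Krull dimension $4$, and adjoining $\Lambda_1$ must increase it to $5$ since $\Lambda_1$ is not algebraic over the fraction field of $\Z[s_1,\dots,s_4]$ — equivalently, $\mathbf{Q}(x_1,\dots,x_4)^{\dihedralgroup{4}}$ has transcendence degree $4$ but is a degree-$3$ extension of $\mathbf{Q}(s_1,\dots,s_4)$). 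For (b), I would use the standard fact that $\Z[x_1,\dots,x_4]$ is free of rank $|\permgroup{4}|=24$ over $\Z[s_1,\dots,s_4]$ (say, via a Gröbner/staircase basis of descending monomials $x_1^{a_1}x_2^{a_2}x_3^{a_3}x_4^{a_4}$ with $a_i\le 4-i$), and that taking $\dihedralgroup{4}$-invariants of a free module can be checked against this: an invariant $f$ averaged suitably lies in the $\Z[s_1,\dots,s_4]$-span of the orbit sums, and one checks the orbit sums all lie in $\Z[s_1,\dots,s_4]\cdot 1 + \Z[s_1,\dots,s_4]\cdot\Lambda_1 + \Z[s_1,\dots,s_4]\cdot\Lambda_1^2$ by a finite computation reducing $\Lambda_1^2$ and higher products against the staircase. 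I expect step (b) — getting the spanning statement cleanly over $\Z$ rather than over a field, so that nothing is lost at the prime $2$ or $3$ — to be the main obstacle; the cleanest route is probably to verify that the rank-$3$ free module $\Z[s_1,\dots,s_4]\langle 1,\Lambda_1,\Lambda_1^2\rangle$ and the actual invariant ring become equal after base change to every residue field (where one can invoke ordinary invariant theory of $\dihedralgroup{4}$ acting on a polynomial ring and the index-$3$ computation), combined with the fact that both are free $\Z[s_1,\dots,s_4]$-modules, to conclude equality integrally.
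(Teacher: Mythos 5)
Your step (a) — the $\Z[s_1,\dots,s_4]$-linear independence of $1,\Lambda,\Lambda^2$ via passage to $\Q(s_1,\dots,s_4)$ and noting $\Q(x_1,\dots,x_4)^{\dihedralgroup{4}}$ is a degree-$3$ extension there (equivalently, the cubic resolvent is irreducible over $\Q(s_1,\dots,s_4)$) — is fine and broadly parallel to the paper's more computational check. The real gap is step (b), and your own hedging is well placed: neither of your two sketches closes it. The ``averaging'' route needs to invert $\card{\dihedralgroup{4}}=8$, which you cannot do over $\Z$. The ``base change to residue fields'' route has two problems: first, it invokes ``the fact that both are free $\Z[s_1,\dots,s_4]$-modules,'' but freeness of $\Z[x_1,\dots,x_4]^{\dihedralgroup{4}}$ over $\Z[s_1,\dots,s_4]$ is not a fact you can assume — it is essentially equivalent to the statement being proved. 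Second, even if you compare with $\kappa[x_1,\dots,x_4]^{\dihedralgroup{4}}$ for residue fields $\kappa$ of $\Z[s_1,\dots,s_4]$, there is no free step saying $\Z[x_1,\dots,x_4]^{\dihedralgroup{4}}\otimes_{\Z[s_1,\dots,s_4]}\kappa\to\kappa[x_1,\dots,x_4]^{\dihedralgroup{4}}$ is an isomorphism; taking invariants does not commute with this base change in general, and you would in particular have to control what happens in characteristic $2$.

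The paper avoids all of this by producing the coefficients $q,r,s$ directly. Given any $p\in\Z[x_1,\dots,x_4]^{\dihedralgroup{4}}$, write $p'$ and $p''$ for the images of $p$ under the two nontrivial cosets of $\dihedralgroup{4}$ in $\permgroup{4}$ (so $\{p,p',p''\}$ is permuted by $\permgroup{4}$, just as $\{\Lambda,\Lambda',\Lambda''\}$ is). Observe that $p'\equiv p''$ modulo $(x_1-x_3)$ and modulo $(x_2-x_4)$, so the two coprime irreducibles $x_1-x_3$ and $x_2-x_4$ both divide $p'-p''$ in the UFD $\Z[x_1,\dots,x_4]$; hence $\Lambda'-\Lambda''=(x_1-x_3)(x_2-x_4)$ divides $p'-p''$, and the quotient $\rho=(p'-p'')/(\Lambda'-\Lambda'')$ is again $\dihedralgroup{4}$-invariant. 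Iterating once more and reassembling produces explicit Lagrange-interpolation-style formulas
\[
q=-\frac{(p-p')\Lambda''+(p''-p)\Lambda'+(p'-p'')\Lambda}{(\Lambda-\Lambda')(\Lambda-\Lambda'')(\Lambda'-\Lambda'')},\quad
r=\frac{(p-p')\Lambda''^2+(p''-p)\Lambda'^2+(p'-p'')\Lambda^2}{(\Lambda-\Lambda')(\Lambda-\Lambda'')(\Lambda'-\Lambda'')},
\]
and a similar expression for $s$, each of whose numerator and denominator change sign under every transposition of $\permgroup{4}$; so $q,r,s\in\Z[x_1,\dots,x_4]^{\permgroup{4}}$ and $p=q\Lambda^2+r\Lambda+s$. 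This is entirely integral, with no averaging, no appeal to mod-$p$ invariant theory, and no unproven freeness assumptions — so if you want a proof that genuinely holds over $\Z$, this direct construction (or something equivalent to it) is what you need in place of your step (b).
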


\begin{proof}
 First, we fix some helpful notation.
 We will write $x_1,x_2,x_3,x_4$ for the four conjugates $\conjugate{x}{1}, \conjugate{x}{2}, \conjugate{x}{3}, \conjugate{x}{4}$ in $R[x]^{\otimes 4}$, identifying the latter $R$-algebra with $R[x_1,x_2,x_3,x_4]$.
 If $p\in \Z[x_1,x_2,x_3,x_4]^{\dihedralgroup{4}}$, then we denote the polynomial $(14).p = (23).p$ in $\Z[x_1,x_2,x_3,x_4]$ by $p'$, and the polynomial $(12).p = (34).p$ by $p''$.  
 Each transposition permutes $\{p,p',p''\}$:
 \begin{align*}
  (23)\text{ and }(14)\text{ interchange }p&\leftrightarrow p'\text{ and fix }p''\\
  (12)\text{ and }(34)\text{ interchange }p&\leftrightarrow p''\text{ and fix }p'\\
  (13)\text{ and }(24)\text{ interchange }p'&\leftrightarrow p''\text{ and fix } p.
 \end{align*}
 If $p\in \Z[x_1,x_2,x_3,x_4]^{\dihedralgroup{4}}$ and any two of $\{p,p',p''\}$ are equal, then $p\in\Z[x_1,x_2,x_3,x_4]^{\permgroup{4}}$.  
 In particular, $\Lambda, \Lambda',$ and $\Lambda''$ are all distinct:
 \begin{align*}
  \Lambda - \Lambda' &= (x_1-x_4)(x_3-x_2)\\
  \Lambda - \Lambda'' &= (x_1-x_2)(x_3-x_4)\\
  \Lambda' - \Lambda'' &= (x_1-x_3)(x_2-x_4).
 \end{align*}
 
 First, we show that $1$, $\Lambda$, and $\Lambda^2$ are $\Z[x_1,x_2,x_3,x_4]^{\permgroup{4}}$-linearly independent.  
 Suppose $q\Lambda^2 + r\Lambda + s = 0$, with $q,r,s\in\Z[x_1,x_2,x_3,x_4]^{\permgroup{4}}$.  
 Then $0 = q\Lambda'^2 + r\Lambda' + s = q\Lambda''^2 + r\Lambda'' + s$, so
 \[
 0 = \frac{q(\Lambda'^2 - \Lambda''^2) + r(\Lambda' - \Lambda'')}{\Lambda' - \Lambda''} = q(\Lambda' + \Lambda'') + r = -q\Lambda + (r + q(\Lambda + \Lambda' + \Lambda'')).
 \]
 Therefore $0 = -q\Lambda' + (r + q(\Lambda + \Lambda' + \Lambda'')) = -q\Lambda'' + (r + q(\Lambda + \Lambda' + \Lambda''))$, so $q(\Lambda'-\Lambda'')=0$, and $q=0$.  
 Then $0 = q(\Lambda' + \Lambda'') + r$ implies that $r=0$, and $0 = q\Lambda^2 + r\Lambda + s$ implies that $s=0$.
 
 To show that $1$, $\Lambda$, and $\Lambda^2$ also generate $\Z[x_1,x_2,x_3,x_4]^{\dihedralgroup{4}}$ as a module over $\Z[x_1,x_2,x_3,x_4]^{\permgroup{4}}$, we use the following observation:
 If $p\in \Z[x_1,x_2,x_3,x_4]^{\dihedralgroup{4}}$, then $p' \equiv p''$ modulo either $(x_1-x_3)$ or $(x_2-x_4)$, so $p'-p''$ must contain factors of both $(x_1-x_3)$ and $(x_2-x_4)$.  
 Since $\Z[x_1,x_2,x_3,x_4]$ is a unique factorization domain, we find that $p'-p''$ is a multiple of $\Lambda'-\Lambda''$.
 In fact, the ratio $\rho = \frac{p'-p''}{\Lambda'-\Lambda''}$ also belongs to $\Z[x_1,x_2,x_3,x_4]^{\dihedralgroup{4}}$, since it is fixed by $(13)$ and $(1234)=(12)(23)(34)$:
 \[
 \begin{tikzcd}[row sep=tiny]
 \ds\rho = \frac{p'-p''}{\Lambda'-\Lambda''} \arrow[mapsto]{r}{(13)} & \ds\frac{p''-p'}{\Lambda''-\Lambda'} = \rho,\\
 \rho = \ds\frac{p'-p''}{\Lambda'-\Lambda''} \arrow[mapsto]{r}{(34)} & \ds\frac{p'-p}{\Lambda'-\Lambda} \arrow[mapsto]{r}{(23)} & \ds\frac{p-p'}{\Lambda-\Lambda'} \arrow[mapsto]{r}{(12)} & \ds\frac{p''-p'}{\Lambda''-\Lambda'} = \rho.
 \end{tikzcd}\]
 Thus we can apply the same procedure to $\rho$ as we did to $p$; set
 \[
 q = -\frac{\rho'-\rho''}{\Lambda'-\Lambda''}.
 \]
 We claim that $q\in\Z[x_1,x_2,x_3,x_4]^{\permgroup{4}}$.  
 In fact, we can write
 \begin{align*}
  -q &= \frac{\rho'-\rho''}{\Lambda'-\Lambda''}\\
  &= \frac{\ds\frac{p-p''}{\Lambda-\Lambda''}-\frac{p'-p}{\Lambda'-\Lambda}}{\Lambda'-\Lambda''}\\
  &= \frac{(p-p'')(\Lambda-\Lambda') - (p-p')(\Lambda - \Lambda'')}{(\Lambda-\Lambda')(\Lambda-\Lambda'')(\Lambda'-\Lambda'')}\\
  &= \frac{(p- p')\Lambda'' + (p'' - p)\Lambda'+ (p'-p'')\Lambda}{(\Lambda-\Lambda')(\Lambda-\Lambda'')(\Lambda'-\Lambda'')}.
 \end{align*}
 Every transposition changes the sign of both the numerator and the denominator of the right-hand side, so $q$ is fixed by the action of $\permgroup{4}$.  
 Now set
 \[
 r = \rho - q(\Lambda' + \Lambda'') \in \Z[x_1,x_2,x_3,x_4]^{\dihedralgroup{4}}.
 \] 
 Again, we claim that $r\in\Z[x_1,x_2,x_3,x_4]^{\permgroup{4}}$, and calculate
 \begin{align*}
  r &= \frac{p'-p''}{\Lambda'-\Lambda''} + \frac{((p- p')\Lambda'' + (p'' - p)\Lambda'+ (p'-p'')\Lambda)(\Lambda' + \Lambda'')}{(\Lambda-\Lambda')(\Lambda-\Lambda'')(\Lambda'-\Lambda'')}\\
  &= \frac{(p-p')\Lambda''^2 + (p''-p)\Lambda'^2 + (p'-p'')\Lambda^2}{(\Lambda-\Lambda')(\Lambda-\Lambda'')(\Lambda'-\Lambda'')}.
 \end{align*}
 Once again, every transposition changes the sign of both numerator and denominator, so $r$ is fixed by $\permgroup{4}$. 
  Finally, set
 \[
 s = p - q\Lambda^2 - r\Lambda \in \Z[x_1,x_2,x_3,x_4]^{\dihedralgroup{4}}.
 \]
 Again, we claim that $s\in \Z[x_1,x_2,x_3,x_4]^{\permgroup{4}}$:
 \begin{align*}
  s &= p - q\Lambda^2 - r\Lambda\\
  &= p + \frac{(p- p')\Lambda'' + (p'' - p)\Lambda'+ (p'-p'')\Lambda}{(\Lambda-\Lambda')(\Lambda-\Lambda'')(\Lambda'-\Lambda'')}\Lambda^2\\
  &\qquad - \frac{(p-p')\Lambda''^2 + (p''-p)\Lambda'^2 + (p'-p'')\Lambda^2}{(\Lambda-\Lambda')(\Lambda-\Lambda'')(\Lambda'-\Lambda'')}\Lambda\\
  &= \frac{p(\Lambda'-\Lambda'')\Lambda'\Lambda'' + p'(\Lambda''-\Lambda)\Lambda\Lambda'' + p''(\Lambda-\Lambda')\Lambda\Lambda'}{(\Lambda-\Lambda')(\Lambda-\Lambda'')(\Lambda'-\Lambda'')}.
 \end{align*}
 Once again, the numerator and denominator each change sign under the action of any transposition, so $s$ is fixed by $\Z[x_1,x_2,x_3,x_4]^{\permgroup{4}}$.  
 Thus we have written $p = q\Lambda^2 + r\Lambda + s$ with $q,r,s\in \Z[x_1,x_2,x_3,x_4]^{\permgroup{4}}$, as desired.
\end{proof}

\begin{corollary}\label{lambdaalgebragenerate}
 Let $R$ be a ring.  
 Then
 \[
  \fixpower{R[x]}{4}{\dihedralgroup{4}} \cong \fixpower{R[x]}{4}{\permgroup{4}}[y]/((y-\Lambda)(y-\Lambda')(y-\Lambda''))
  \]
 as $\fixpower{R[x]}{4}{\permgroup{4}}$-algebras.
\end{corollary}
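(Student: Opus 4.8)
The plan is to write down the evident $\fixpower{R[x]}{4}{\permgroup{4}}$-algebra homomorphism from the right-hand side to the left-hand side and then check it is an isomorphism by comparing free-module bases. First I would record that the coefficients of the monic cubic $g(y) := (y-\Lambda)(y-\Lambda')(y-\Lambda'')$ actually lie in $\fixpower{R[x]}{4}{\permgroup{4}}$: they are the elementary symmetric functions of $\Lambda,\Lambda',\Lambda''$, and the computation in the preceding lemma shows that every transposition of $\permgroup{4}$ permutes the three-element set $\{\Lambda,\Lambda',\Lambda''\}$; since transpositions generate $\permgroup{4}$, these symmetric functions are $\permgroup{4}$-invariant. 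As $\Lambda$ is literally a root of $g$ inside $R[x]^{\otimes 4} = R[x_1,x_2,x_3,x_4]$ and lies in $\fixpower{R[x]}{4}{\dihedralgroup{4}}$, there is a well-defined $\fixpower{R[x]}{4}{\permgroup{4}}$-algebra homomorphism
\[
\psi\colon \fixpower{R[x]}{4}{\permgroup{4}}[y]/(g(y)) \longrightarrow \fixpower{R[x]}{4}{\dihedralgroup{4}}, \qquad y\longmapsto \Lambda.
\]

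Because $g$ is monic of degree $3$, the source of $\psi$ is free over $\fixpower{R[x]}{4}{\permgroup{4}}$ on $\{1,y,y^2\}$, and $\psi$ sends this to $\{1,\Lambda,\Lambda^2\}$. So it suffices to prove that $\{1,\Lambda,\Lambda^2\}$ is an $\fixpower{R[x]}{4}{\permgroup{4}}$-module basis of $\fixpower{R[x]}{4}{\dihedralgroup{4}}$: a homomorphism of free modules of equal finite rank that carries one basis bijectively onto another is an isomorphism (and this one is also a ring map). For $R = \Z$ this basis statement is exactly the preceding lemma, so the remaining work is to lift it to an arbitrary ring $R$.

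The one non-formal step---the main obstacle---is this base change, since forming $G$-invariants does not commute with arbitrary base change in general. The resolution is that $\permgroup{4}$ acts on $\Z[x_1,x_2,x_3,x_4]$ by permuting the variables, hence permuting monomials, so this ring (with its $\Z$-basis of monomials) is a direct sum of permutation $\Z[\permgroup{4}]$-modules, and likewise a direct sum of permutation modules for the $\dihedralgroup{4}$-action. Invariants of a permutation module are freely spanned by the orbit sums, so they commute with arbitrary base change and with direct sums, which gives $\fixpower{R[x]}{4}{G}\cong R\otimes_\Z \fixpower{\Z[x]}{4}{G}$ for $G=\permgroup{4}$ and $G=\dihedralgroup{4}$, compatibly with the inclusion of invariant subrings and with the elements $1,\Lambda,\Lambda^2$. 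Tensoring the identity of the preceding lemma with $R$ over $\Z$ then shows $\fixpower{R[x]}{4}{\dihedralgroup{4}}$ is free over $\fixpower{R[x]}{4}{\permgroup{4}}$ on $\{1,\Lambda,\Lambda^2\}$, and hence $\psi$ is an isomorphism. (One could instead try to invoke a base-change statement such as \cite[Prop.\ 3.6]{Bie15}, but its finite-rank hypothesis is not met by $\Z[x]$ over $\Z$; so either the permutation-module observation above, or a graded-piece-by-graded-piece reduction to the finite-rank case, is what actually does the job.)
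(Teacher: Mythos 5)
Your proof is correct and follows the same strategy as the paper's own proof: define the $\fixpower{R[x]}{4}{\permgroup{4}}$-algebra map $y\mapsto\Lambda$ and observe that it carries the module basis $\{1,y,y^2\}$ onto the module basis $\{1,\Lambda,\Lambda^2\}$. The one place where you go beyond the paper is in treating the passage from $\Z$ (over which the preceding lemma is actually stated) to a general ring $R$: the paper simply asserts that $\{1,\Lambda,\Lambda^2\}$ is an $\fixpower{R[x]}{4}{\permgroup{4}}$-module basis of $\fixpower{R[x]}{4}{\dihedralgroup{4}}$ without comment, leaving the base change implicit. You correctly flag that forming $G$-invariants need not commute with arbitrary base change, supply the right fix --- $\Z[x_1,\dots,x_4]$ has a $\Z$-basis of monomials permuted by $\permgroup{4}$, hence is a direct sum of permutation modules for both $\permgroup{4}$ and $\dihedralgroup{4}$, so invariants are free on orbit sums and commute with any base change --- and also rightly observe that \cite[Prop.~3.6]{Bie15}, the paper's usual tool for base-changing invariants, is inapplicable here because $\Z[x]$ is not finite rank over $\Z$. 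This added justification is sound, and it is a genuine gap in the paper's terse exposition that your argument fills; the alternative you mention (reducing to the finite-rank case graded degree by graded degree) would also work.
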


\begin{proof}
 The isomorphism from right to left is given by $y\mapsto \Lambda$. 
 This homomorphism is bijective since it maps the $\fixpower{R[x]}{4}{\permgroup{4}}$-module basis $\{1,y,y^2\}$ to the module basis $\{1,\Lambda,\Lambda^2\}$.
\end{proof}


\begin{proof}[Proof of \cref{main-D4}]
 First we prove this in the case that $R$ is reduced, so that $(R,\dihedralgroup{4})$ is a benign pair.
 Then by \cref{specialparametrize}, isomorphism classes of $\dihedralgroup{4}$-closures of $A$ over $R$ correspond to $R$-algebra homomorphisms $\fixpower{R[x]}{4}{\dihedralgroup{4}}\to R$ sending $e_k(x)\mapsto s_k$.
 If we denote the $R$-algebra map $\fixpower{R[x]}{4}{\permgroup{4}}\to R$ sending $e_k(x)\mapsto s_k$ by $\phi$, then such homomorphisms in turn correspond to $R$-algebra homomorphisms to $R$ from $\fixpower{R[x]}{4}{\dihedralgroup{4}}\otimes_{\fixpower{R[x]}{4}{\permgroup{4}}} R$, which by \cref{lambdaalgebragenerate} is 
 \[\begin{gathered}
 R[y]/\bigl(y^3 - \phi(\Lambda+\Lambda'+\Lambda'')y^2 + \phi(\Lambda\Lambda' + \Lambda\Lambda'' + \Lambda'\Lambda'')y - \phi(\Lambda\Lambda'\Lambda'')\bigr)\\
 = R[y]/\bigl(y^3 - (s_2)y^2 + (s_1s_3-4s_4)y - (s_1^2s_4 - 4s_2s_4+s_3^2)\bigr),
 \end{gathered}\]
the cubic resolvent algebra.
 
 One way of phrasing the conclusion of \cref{specialparametrize} is that every $R$-algebra homomorphism
 \[\fixpower{R[x]}{4}{\dihedralgroup{4}}\ \midotimes_{\mathclap{\fixpower{R[x]}{4}{\permgroup{4}}}}\ R \to R\]
  factors through the quotient map
 \[\fixpower{R[x]}{4}{\dihedralgroup{4}}\ \midotimes_{\mathclap{\fixpower{R[x]}{4}{\permgroup{4}}}}\ R \onto \fixpower{A}{4}{\dihedralgroup{4}}\ \midotimes_{\mathclap{\fixpower{A}{4}{\permgroup{4}}}}\ R\]
 to give a $\dihedralgroup{4}$-closure datum $\fixpower{A}{4}{\dihedralgroup{4}}\otimes_{\fixpower{A}{4}{\permgroup{4}}} R\to R$. 
 We now show that this holds even if $R$ is not reduced.
 
 Indeed, consider the universal monogenic rank-$4$ algebra $R_0\to A_0$ given by
 \begin{align*}
  R_0 &= \Z[S_1,S_2,S_3,S_4],\qquad\\
   A_0 &= R_0[x]/(x^4-S_1x^3+S_2x^2-S_3x+S_4),
 \end{align*}
 with the $S_i$ formal indeterminates.
 Then the algebra $R\to A$ is the base change of $R_0\to A_0$ along the ring homomorphism $R_0\to R$ sending each $S_i$ to $s_i$.
 
 What we have already shown is that if $R$ is any \emph{reduced} $R_0$-algebra, then every $R_0$-algebra homomorphism
 \[\fixpower{R_0[x]}{4}{\dihedralgroup{4}}\ \midotimes_{\mathclap{\fixpower{R_0[x]}{4}{\permgroup{4}}}}\ R_0 \to R\]
 factors through the quotient map
 \[\fixpower{R_0[x]}{4}{\dihedralgroup{4}}\ \midotimes_{\mathclap{\fixpower{R_0[x]}{4}{\permgroup{4}}}}\ R_0 \onto \fixpower{A_0}{4}{\dihedralgroup{4}}\ \midotimes_{\mathclap{\fixpower{A_0}{4}{\permgroup{4}}}}\ R_0.\]
 
 In particular, we can take $R$ itself to be the tensor product
 \[\begin{gathered}R:=\ \fixpower{R_0[x]}{4}{\dihedralgroup{4}}\ \midotimes_{\mathclap{\fixpower{R_0[x]}{4}{\permgroup{4}}}}\ R_0\\
 \cong R_0[y]/\bigl(y^3 - (S_2)y^2 + (S_1S_3-4S_4)y - (S_1^2S_4 - 4S_2S_4+S_3^2)\bigr),
 \end{gathered}\]
 which is reduced because $y^3 - (S_2)y^2 + (S_1S_3-4S_4)y - (S_1^2S_4 - 4S_2S_4+S_3^2)$ is an irreducible element of the polynomial ring $\Z[S_1,S_2,S_3,S_4,y]$---if it were not, every cubic resolvent would have a root, and every separable quartic polynomial would have Galois group contained in $\dihedralgroup{4}$.
 
 Therefore the identity homomorphism on $R$ factors through its quotient $\fixpower{A_0}{4}{\dihedralgroup{4}}\otimes_{\fixpower{A_0}{4}{\permgroup{4}}} R_0$, so the quotient map must in fact be an isomorphism:
 \[\fixpower{R_0[x]}{4}{\dihedralgroup{4}}\ \midotimes_{\mathclap{\fixpower{R_0[x]}{4}{\permgroup{4}}}}\  R_0 \ \cong\  \fixpower{A_0}{4}{\dihedralgroup{4}}\ \midotimes_{\mathclap{\fixpower{A_0}{4}{\permgroup{4}}}}\ R_0.\]
 Changing base to a general ring $R$, then, we find that
 \[\fixpower{R[x]}{4}{\dihedralgroup{4}}\ \midotimes_{\mathclap{\fixpower{R[x]}{4}{\permgroup{4}}}}\ R\ \cong\  \fixpower{A}{4}{\dihedralgroup{4}}\ \midotimes_{\mathclap{\fixpower{A}{4}{\permgroup{4}}}}\ R,\]
 so the conclusion of \cref{specialparametrize} holds even if $R$ is not reduced, and $\dihedralgroup{4}$-closure data for $R[x]/(f(x))$ correspond to roots of $f$'s cubic resolvent.
\end{proof}


\bibliographystyle{plain}
\bibliography{RefList}

\end{document}